


\documentclass[11pt,reqno,a4paper]{amsart}
\usepackage[toc,page]{appendix}


\usepackage[utf8]{inputenc}
\usepackage{esint}
\usepackage{MnSymbol}
\usepackage{enumitem}
\usepackage[english]{babel}
\usepackage{amsmath}
\usepackage{thm-restate}


\usepackage[
	colorlinks,
	pdfpagelabels,
	pdfstartview = FitH,
	bookmarksopen = true,
	bookmarksnumbered = true,
	linkcolor = blue,
	plainpages = false,
	hypertexnames = false,
	citecolor = red] {hyperref}

\hypersetup{
	linktoc=page
}


\usepackage[capitalize]{cleveref}
\crefname{enumi}{}{}
\crefname{equation}{}{}

\setcounter{tocdepth}{2}
\makeatletter
\def\@tocline#1#2#3#4#5#6#7{\relax
  \ifnum #1>\c@tocdepth 
  \else
    \par \addpenalty\@secpenalty\addvspace{#2}%
    \begingroup \hyphenpenalty\@M
    \@ifempty{#4}{%
      \@tempdima\csname r@tocindent\number#1\endcsname\relax
    }{%
      \@tempdima#4\relax
    }%
    \parindent\z@ \leftskip#3\relax \advance\leftskip\@tempdima\relax
    \rightskip\@pnumwidth plus4em \parfillskip-\@pnumwidth
    #5\leavevmode\hskip-\@tempdima
      \ifcase #1
       \or\or \hskip 1em \or \hskip 2em \else \hskip 3em \fi%
      #6\nobreak\relax
    \dotfill\hbox to\@pnumwidth{\@tocpagenum{#7}}\par
    \nobreak
    \endgroup
  \fi}
\makeatother


\newtheorem{theorem}{Theorem}
\newtheorem*{theorem*}{Theorem}
\newtheorem{proposition}{Proposition}[section]
\newtheorem{lemma}[proposition]{Lemma}

\theoremstyle{definition}
\newtheorem{definition}[proposition]{Definition}
\newtheorem{remark}[proposition]{Remark}
\newtheorem{example}[proposition]{Example}
\numberwithin{equation}{section}


\def \R {\mathbb {R}}
\def \N {\mathbb {N}}
\def \E {\mathbb{E}}

\def\osc{\operatorname{osc}}

\def\grad{\nabla}

\renewcommand{\tilde}{\widetilde}


\renewcommand{\S}{\mathbf{S}}


\newcommand{\Hma}{\mathcal{H}_{m,a}}
\newcommand{\cHma}{\overline{\mathcal{H}}_{m,a}}
\newcommand{\Lal}{\mathcal{L}_{al}}


\newcommand{\I}{\mathbb{I}}
\newcommand{\eps}{\varepsilon}

\newcommand{\pa}{\partial}

\renewcommand{\fint}{\strokedint}
\newcommand{\sgn}{\operatorname{sgn}}


\allowdisplaybreaks


\setcounter{tocdepth}{1}
\begin{document}

\title[Approximation of BV functions by neural networks]{Approximation of BV functions by neural networks\\ {\tiny A regularity theory approach}}
\author[Avelin]{Benny Avelin}
\address{Benny Avelin,
Department of Mathematics, 
Uppsala University,
S-751 06 Uppsala, 
Sweden} 
\email{\color{blue} benny.avelin@math.uu.se}
\author[Julin]{Vesa Julin}
\address{Vesa Julin,
Department of Mathematics and Statistics, 
University of Jyv\"askyl\"a,
P.O. Box 35, 
40014 Jyv\"askyl\"a, 
Finland} 
\email{\color{blue} vesa.julin@jyu.fi}

\keywords{Two Layer Neural Network, Universal Approximation, Uniform Approximation, Poincaré Inequality, Spectral Gap, BV Functions, Fokker-Planck, Convergence, Regularity}

\subjclass{Primary: 68T07,35P15, Secondary: 35Q84,60H10,35K99}
\date{\today}

\begin{abstract}
    In this paper we are concerned with the approximation of functions by single hidden layer neural networks with ReLU activation functions on the unit circle. In particular, we are interested in the case  when the number of data-points exceeds the number of nodes. We first study the convergence to equilibrium of the  stochastic gradient flow associated with the cost function with a quadratic penalization. Specifically, we prove a Poincar\'e inequality for a penalized version of the cost function with explicit constants that are independent of the data and of the number of nodes. As our penalization biases the weights to be bounded, this leads us to study how well a network with bounded weights can approximate a given function of bounded variation (BV). 
    
    Our main contribution concerning approximation of BV functions, is a result which we call \emph{the localization theorem}. Specifically, it states that the expected error of the constrained problem, where the length of the weights are less than $R$, is of order $R^{-1/9}$ with respect to the unconstrained problem (the global optimum). The proof is novel in this topic and is inspired by techniques from regularity theory of elliptic partial differential equations. Finally we quantify the value of the global optimum by proving a quantitative version of the universal approximation theorem.
\end{abstract}

\maketitle
\tableofcontents

\section{Introduction}

This paper is concerned with a supervised learning problem, where the goal is to learn a function from data. We adopt the viewpoint of function approximation. Specifically, we consider $x \in \S^1$, and a function of bounded variation, or BV-function for short, $y: \S^1 \to \R$, where $\S^1$ is the unit circle in $\R^2$. The goal is to approximate the function $y$ in  the space of two layer neural networks (single hidden layer) with ReLU activation on the unit circle. To be specific let us define our space of functions as
\begin{equation}\label{def f}
	f_{W,a}(x) = \frac{1}{\sqrt{m}}\sum_{i=1}^m a_i \sigma(w_i \cdot x),
\end{equation}
where the vectors $w_i \in \R^2$, $W = (w_1, \dots, w_m)\in \R^{2m}$, denote the weights, and the coefficients $a_i \in \{-1,1\}$,  $a = (a_1, \dots, a_m)$, are given and the activation function is  $\sigma(t) = \max\{t,0 \}$. We denote the set of functions formed in \cref{def f} as $\Hma$.
This particular set of neural networks on the unit circle (or unit sphere) has previously been studied e.g. in  \cite{Arora, DuZhai}. Note that the network in \cref{def f} does not have a bias, which is a simplifying assumption. However, it turns out that this space is still rich enough to approximate any antipodally symmetric function. We assume that the function $y$ is in BV, which is a natural assumption as this allows it to take discrete values which is common in classification problems. Moreover, since the Fourier coefficients $a_k$ of a periodic BV-function decays as $k^{-1}$, this class contains a periodic Barron type space, \cite{Barron}.

We define the cost function $\Phi : \R^{2m} \to \R$ as
\begin{equation} \label{def:costfunc}
\Phi(W) :=  \int_{\S^1} |f_{W,a}(x) - y(x)|^2 d\mu(x) = \|f_{W,a} - y\|_{L^2(\mu)}^2
\end{equation}
and consider the following $L^2$ approximation problem
\begin{equation} \label{min prob}
	\inf_W \Phi(W).
\end{equation}
In \cref{def:costfunc}  $\mu$ is the data-specific measure, which we assume to be a probability measure on $\S^1$. In particular, $\mu$ can be discrete or  absolutely continuous with respect to the Lebesgue measure. This covers all the interesting cases as we have no restriction on $\mu$.
    
The paper is divided into two parts. We first consider first-order methods with noise (Langevin dynamics) to find the minimum of \cref{min prob}.
It turns out that for a general probability measure $\mu$ we need to add a penalization on the cost function \cref{def:costfunc} in order to have a uniform convergence rate (see \cref{badexamp}). This is in contrast to \cite{DuZhai} where the authors assume a non-degeneracy condition on the empirical measure, i.e. that the points of support are mutually non-parallel. It is clear that when the number of data points grows to infinity we will lose the non-degeneracy condition. In this direction and for a general probability  measure $\mu$, we prove in \cref{thm1} the exponential convergence of the stochastic gradient flow associated with \cref{min prob} under quadratic penalization. 

In the second part we study the value of \cref{min prob}, with and without penalization, which we interpret as function approximation problems. As our penalization biases the weights to have bounded length, we first estimate in \cref{thm2} the expected error that we make when we approximate $y$ under the  constraint that the functions in our network $\Hma$  have uniformly bounded weights. We call this \emph{the Localization theorem} and it is the main result of this paper. To the best of our knowledge the closest related result in the literature is \cite{bolcskei2019optimal}. Finally in \cref{thm3} we prove a uniform approximation theorem where we estimate the value of \cref{min prob} in terms of number of (alternating) nodes. This is a quantitative form of the universal approximation theorem \cite{Barron}, see also \cite{shaham2018provable}.

\subsection{Statement of main results}
\subsubsection*{Convergence of stochastic first order methods}
\phantom{.}

Consider the following stochastic optimization of the minimization problem \cref{min prob}:
\begin{align*}
	dW_t = - \grad \Phi(W) dt + \sqrt{2} \eps dB_t,
\end{align*}
where $dB_t$ is the standard isotropic Wiener noise. We remark that the above SDE is basic in many areas of physics and can be considered as a weak approximation to Stochastic Gradient Descent, as in \cite{AN,LTE,LTW,SSJ}. The density $\rho(W,t)$ of $W_t$ satisfies the following partial differential equation (PDE), called the Fokker-Planck or forward Kolmogorov equation,
\begin{align*}
	\frac{\partial \rho}{\partial t} = \nabla \cdot \left ( \rho \nabla \Phi + \eps^2 \nabla \rho \right ).
\end{align*}
The smoothness imposed by the isotropic noise term is crucial and it seems the way forward when dealing with general data-measures $\mu$. Naturally the noise with $\eps>0$ also helps the flow not to get stuck near local minimum points.

The convergence rate of the density $\rho(\cdot, t)$ to an equilibrium measure $\rho_\infty$ and its relation to  Poincar\'e inequalities (or equivalently spectral gaps) and logarithmic Sobolev inequalities is a well studied problem in both stochastics and in PDEs, see e.g. \cite{Aida,Aida2,BBCG,Bobkov,Cattiaux,CG,DPF,Gross,Milman,VBook,Wang}.  In particular, if we want to prove an exponential convergence rate for any initial distribution $\rho_0$,  $\Phi$ needs to be a confining potential, i.e. it needs to grow at infinity with at least linear rate \cite{VBook}. This is the motivation for studying $\Phi$ in depth.

Let us first discuss about the minimization problem \cref{min prob} itself, and, in particular, why it is in general not well posed. By this we mean that the problem does not always admit a minimizer and the gradient flow may not converge.  The reason for this is that $\Phi$ is not coercive, which causes problems as the following example shows.  
\begin{example} \label{badexamp}
    We consider $\mathcal{H}_{2,a}$, which is the simplest non-trivial neural network, and  assume that $\mu$ is the normalized Lebesgue measure. Let $a_1 = 1$ and $a_2 = -1$ and choose
	\begin{align*}
		y(x) = y(x_1,x_2) = \I\{x_2 \geq 0\} x_1,
	\end{align*}
	where $\I$ denotes the indicator function. Note that $y$ is not continuous but it is a function of bounded variation. Then $\Phi(W) > 0$ for all $W \in  \R^4$ but
	\begin{align*}
		\inf_{W \in \R^4} \Phi(W) = 0.
	\end{align*}
	In particular, the function $\Phi$ does not have a global minimum. Moreover,  there is a point  $W_0  \in \R^4$ such that  the gradient flow 
    \begin{align*}
        \frac{d}{dt} W_t = - \nabla \Phi(W_t),
    \end{align*}
    staring from  $W_0$ diverges, i.e.,
    \begin{align*}
        \lim_{t \to \infty} |W_t| = \infty.
    \end{align*}
\end{example}

Let us briefly explain why the example holds. First, since $y$ is not continuous and every $f_{W,a} \in \Hma$ is Lipschitz continuous, it is clear that for every $W$ we have $\Phi(W) >0$. To see that the infimum is zero we notice that the following holds:
\begin{align*}
	\lim_{h \to 0} \frac{\sigma(x_2 + h x_1) - \sigma(x_2) }{h} =  \sigma'(x_2) x_1 = \I\{x_2 \geq 0\} x_1,
\end{align*}
pointwise a.e. $x$. Therefore, by recalling that $f_{W,a}(x) = \frac{1}{\sqrt{2}} (\sigma(w_1 \cdot x) - \sigma(w_2 \cdot x))$ we have by choosing $w_1 = \frac{\sqrt{2}}{h} e_2 + \sqrt{2} e_1$ and $w_2 = \frac{\sqrt{2}}{h} e_2$ that
\begin{align*}
	f_{W,a}(x) =  \sigma( \frac{1}{h} x_2+ x_1) - \sigma(\frac{1}{h} x_2 ) = \frac{\sigma(x_2 + x_1) - \sigma(x_2) }{h}  \to \I\{x_2 \geq 0\} x_1.
\end{align*}
The claim $\inf \Phi(W) = 0$ then follows by the dominated convergence theorem. This shows that the minimization problem \cref{min prob} does not, in general, have a minimum. \cref{badexamp} also claims that the gradient flow does not in general converge. We prove this in \cref{lem:badexamp2} in \cref{sec:thm1}. Furthermore, if the target function is a neural network in the class $\Hma$, then the global minimum is only unique up to certain symmetries, see for instance \cite{fefferman1993recovering,fefferman1994reconstructing,vlavcic2019neural}.

\cref{badexamp} shows that for a generic probability measure $\mu$ the minimization problem \cref{min prob} is surprisingly complicated already in the two node case. We do not try to classify all critical points of \cref{min prob} but \cref{badexamp} shows that the problem is not coercive, does not have global minimum point, and the gradient flow may diverge. This suggests that without any assumptions on the data-points, the exponential convergence proved in \cite{DuZhai} probably fails. 

As mentioned before, in order for the density of the Fokker-Planck equation to converge exponentially fast to equilibrium, the potential needs to be confining, but \cref{badexamp} shows that this is not the case. Motivated by this, we modify the cost function \cref{min prob} by adding a penalization, similar to \cite{AN}, and define for a chosen large parameter $R$
\begin{equation}\label{pena}
	\Phi_R(W) :=  \max\{ \Phi(W), 4(|W|^2 - R^2) \}.
\end{equation}
Now it is trivial to see that for every $R>0$, the problem
\begin{equation}\label{min prob delta}
	\inf_{W \in \R^{2m}} \Phi_R(W)
\end{equation}
has a global minimum, and at a minimum point $W_R$ the weight is bounded by $|W_R|\leq R+ \|y\|_{L^2(\mu)}$.

In the  first result  we study the convergence of the stochastic gradient flow of the penalized cost function
\begin{equation} \label{SDE2}
	dW_t = - \grad \Phi_R(W) dt + \sqrt{2} \eps dB_t
\end{equation}
with fixed small $0< \eps\leq 1$. Again we denote the density of $W_t$ by $\rho(W,t)$.

\begin{theorem}
	\label{thm1}
	Let us fix $\eps \in (0,1]$ and   choose $R\geq 10$. Let $\Phi_R$ be the penalized cost function defined by  \cref{def:costfunc,pena}, and assume $\|y\|_{L^2(\mu)} \leq 1$. Assume that $\rho(W,t)$ is the density of $W_t$ which is the solution of  \cref{SDE2}   with initial datum $\rho_0$. Then $\rho(\cdot, t) \to \rho_\infty$, where $\rho_\infty(W) = e^{- \Phi_R(W)/ \eps^2}$,  exponentially fast and we have the estimate
	\begin{align*}
		\int_{\R^{2m}} |\rho(\cdot, t) - \rho_\infty|^2  e^{ \Phi_R/ \eps^2} \, dW  \leq e^{-C t} \int_{\R^{2m}} |\rho_0 - \rho_\infty|^2  e^{ \Phi_R / \eps^2} \, dW.
	\end{align*}
	When the number of nodes satisfies   $m \geq 24 R^2 \eps^{-2}$, the constant $C$ is bounded by 
	\begin{align*}
		C \geq \frac{1}{70}.
	\end{align*}
	Otherwise we have the bound $C \geq 16 e^{-(4R^2 +2)\eps^{-2}}$.
	
\end{theorem}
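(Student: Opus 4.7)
The first step is to convert the weighted $L^2$ convergence statement into an equivalent Poincaré inequality for the invariant measure $\rho_\infty = e^{-\Phi_R/\eps^2}$. Writing $u = \rho/\rho_\infty$, the Fokker--Planck equation takes the self-adjoint form $\partial_t u = \eps^2 \rho_\infty^{-1}\nabla\cdot(\rho_\infty \nabla u)$. Multiplying by $(u - \bar u)\rho_\infty$ for the $t$-independent mean $\bar u$ and integrating by parts yields the energy identity
\[
\frac{d}{dt}\int (\rho-\rho_\infty)^2 e^{\Phi_R/\eps^2}\,dW = -2\eps^2 \int |\nabla u|^2 \rho_\infty\,dW,
\]
so any Poincaré inequality $\int (u-\bar u)^2 \rho_\infty \leq C_P \int |\nabla u|^2 \rho_\infty$ produces exponential decay at rate $C = 2\eps^2/C_P$ by Gronwall. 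The task reduces to bounding $C_P$ in each of the two regimes of the theorem.

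For the general bound I would apply the Holley--Stroock perturbation lemma relative to the reference potential $V_0(W) = 4|W|^2$, whose Gaussian measure $e^{-V_0/\eps^2}$ satisfies a Poincaré inequality with constant $\eps^2/8$ by Bakry--\'Emery. The perturbation is
\[
V_1 = \Phi_R - 4|W|^2 = \max\{\Phi-4|W|^2,\,-4R^2\}.
\]
The lower bound $V_1 \geq -4R^2$ is immediate; for the upper bound I would use $\|f_{W,a}\|_{L^\infty(\S^1)}\leq |W|$ (from $|\sigma(w_i\cdot x)|\leq |w_i|$ on $\S^1$ and Cauchy--Schwarz on $\sum|w_i|$) to deduce $\Phi(W) \leq 2|W|^2 + 2\|y\|_{L^2(\mu)}^2 \leq 2|W|^2+2$, which forces $V_1 \leq 2$. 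Thus $\osc(V_1)\leq 4R^2+2$, Holley--Stroock yields $C_P \leq (\eps^2/8)\,e^{(4R^2+2)/\eps^2}$, and hence $C \geq 16\,e^{-(4R^2+2)/\eps^2}$, matching the second bound.

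The main difficulty is the dimension-free bound $C \geq 1/70$ under $m\geq 24 R^2/\eps^2$, which requires a Poincaré constant of order $\eps^2$ independent of $m$, $R$, and the data measure $\mu$. The key geometric observation is that on the outer region $A = \{|W|^2 \geq 2R^2+1\}$ the same bound $\Phi\leq 2|W|^2+2$ forces $\Phi_R \equiv 4(|W|^2-R^2)$, so $\rho_\infty$ agrees there with the strongly convex isotropic Gaussian $d\nu \propto e^{-4(|W|^2-R^2)/\eps^2}\,dW$, which enjoys the dimension-free Bakry--\'Emery Poincaré constant $\eps^2/8$. Under $\nu$ the squared radius $|W|^2$ has mean $m\eps^2/4$, and the hypothesis $m\geq 24R^2/\eps^2$ makes this mean at least $6R^2 \gg 2R^2+1$, so standard $\chi^2_{2m}$-tail estimates render $\rho_\infty(A^c)$ exponentially small in $m$. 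I would then combine the Gaussian estimate on $A$ with this concentration via a localization/two-set argument: decompose any mean-zero test function into its $A$ and $A^c$ pieces, control the $A$-part by the Gaussian Poincaré, and absorb the $A^c$-part using only the tiny weight $\rho_\infty(A^c)$ together with a crude local inequality on the bounded set $A^c$. The hardest step will be the quantitative bookkeeping needed to make the constants cancel cleanly: the dimension-dependent prefactor from the interior estimate (potentially as bad as the Holley--Stroock factor $e^{(4R^2+2)/\eps^2}$) must be absorbed by the exponential smallness of $\rho_\infty(A^c)$ coming from the $\chi^2_{2m}$-concentration, so that only a universal constant near $140$ survives in $C_P/\eps^2$, yielding the rate $C \geq 1/70$ independent of $R$, $m$, and $\eps$.
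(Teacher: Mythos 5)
Your reduction to a Poincar\'e inequality via the energy identity is correct and matches the paper exactly, and your Holley--Stroock argument for the general bound $C \geq 16e^{-(4R^2+2)\eps^{-2}}$ is also essentially the paper's: you both write $\Phi_R$ as the Gaussian potential $4|W|^2$ (constant $\eps^2/8$) plus a perturbation supported on a ball of radius $\sim R$, with oscillation $\leq 4R^2+2$.

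The gap is in the dimension-free bound. Your proposed ``two-set'' argument --- Gaussian Poincar\'e on $A=\{|W|^2 \geq 2R^2+1\}$ plus the smallness of $\rho_\infty(A^c)$ plus a crude local Poincar\'e on $A^c$ --- is not obviously sufficient, because \emph{smallness of the bad region alone does not control the Poincar\'e constant}. As a cautionary example: in $1$D, replacing the Gaussian density by $e^{-x^2/2-M}$ on a small interval $[-a,a]$ keeps the total mass of the perturbed region tiny ($\sim a e^{-M}$), yet a test function ramping from $-1$ to $+1$ across $[-a,a]$ gives variance $\sim 1$ against Dirichlet energy $\sim e^{-M}/a$, so $C_P \gtrsim a e^{M}$ blows up with $M$. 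What rescues the high-dimensional case is not just concentration of measure but the fact that the perturbation $F := \tfrac12(4(|W|^2-R^2)-\Phi)_-$ has \emph{controlled gradient}: Lemma 3.1 gives $|\nabla F|^2 \leq 16|W|^2 + 4$, so $F$ is $O(R)$-Lipschitz rather than having a steep cliff. The paper feeds precisely this into Aida's perturbation lemma (their Lemma 2.4), whose hypothesis $\int \exp(2C_{LS}(1+\beta)|\nabla G|^2)\,d\gamma \leq 1+\beta/2$ is a condition on $\nabla G$, not on $\osc(G)$ or $\gamma(\supp G)$; they then verify it by combining the gradient bound with the Stirling estimate for $|B_{\sqrt2 R}|$ and $m\geq 24R^2\eps^{-2}$, arriving at $C_P\leq 140$ and $C\geq 1/70$. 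Your sketch correctly identifies the ingredients (the set $A$, the $\chi^2_{2m}$-concentration, the universal constant $\approx 140$), but the mechanism you describe for ``absorbing'' the bad interior estimate by the small measure is not a valid argument on its own; you would need to reintroduce a quantitative gradient or Lipschitz control on the perturbation, at which point you would essentially be reproving Aida's lemma.
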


The strength of \cref{thm1} is that we obtain the exponential convergence of the stochastic gradient flow \cref{SDE2} and the rate is independent of the number of nodes and on the distribution of the data points. The latter follows from the fact that the rate does not depend on the probability measure $\mu$. In addition, \cref{thm1} implies that when the number of nodes is large enough with respect to $\eps^{-1}$ and  $R$, then the stochastic gradient flow \cref{SDE2} converges with a uniform speed (in distribution). The convergence rate in \cref{thm1} does however depend on $\eps$ and on $R$, but this is necessary. Indeed, \cref{badexamp} indicates that the dependence on $R$ is necessary, since otherwise it is not clear if the flow even  converges. 

We prove \cref{thm1} using Aida's perturbation argument \cite{Aida}. Specifically, we consider $\Phi_R$ as a perturbation of a quadratic potential which satisfies a dimensionally independent log-Sobolev inequality.
Furthermore, when the dimension is high, $\Phi_R$ can be seen as a local perturbation of the quadratic potential. For this reason we are able to obtain a dimension-free bound on the spectral gap when  $m \geq R^2\eps^{-2}$. We note that it would be interesting to consider other penalizations, e.g.  the linear growth penalization $c|W|$ or the quadratic penalization $c |W|^2$. Unfortunately, in both of these cases the perturbation proof breaks down when $c$ is small. For linear growth potentials the dimensionally independent result is still lacking, and the best known result  can be found in \cite{BBCG,BCG,Wu}.

\subsubsection*{Function approximation}
\phantom{.}

\cref{thm1} provides a uniform rate of convergence to equilibrium of \cref{SDE2}. Furthermore, note that when $\eps>0$ is small the equilibrium density $\rho_\infty$ is concentrated near the global minimum of the cost function $\Phi_R$ defined in \cref{pena}. However, this still leaves open the question, what is the minimum value \cref{min prob delta} and how close it is to \cref{min prob}?  We restate this question in an equivalent way as a problem of function approximation, i.e., how well can we approximate a given function $y$ using functions in our space $\Hma$. Note that it is clear from our choice of the penalization \cref{pena} that 
\[
    \min_{ |W|\leq R} \Phi(W) \geq \inf_{W} \Phi_R(W) \geq \inf_{W} \Phi(W),
\]
which leads to a further question, how well we can approximate $y$ using functions from $\Hma$ under the constraint that the weights are bounded by $|W|\leq R$?

To this aim, in the second part of the paper we study the values of
\begin{align}
    \label{eq:l2approx}
    \inf_{f_W \in \Hma} \| f_W - y \|_{2}^2
\end{align}
and
\begin{align}
    \label{eq:l2_loc_approx}
    \min_{\substack{f_W \in \Hma \\ |W| \leq R}}\| f_W - y \|_{2}^2,
\end{align}
where $f_W = f_{W,a} \in \Hma$ is a function in our network defined in \cref{def f} and $\|\cdot \|_2$ is the standard $L^2$-norm on $\S^1$.  
We are interested in the following questions:
\begin{enumerate}
    \item \label{q:1} How large is the gap between \cref{eq:l2_loc_approx} and \cref{eq:l2approx}?
    \item \label{q:2} What is the value of \cref{eq:l2approx}?
\end{enumerate}
\begin{remark} 
    Note that we measure the distance between $f_W$ and $y$ with the standard $L^2$-norm, which can be interpreted as the expected error between $f(X)$ and $y(X)$ when the random variable  $X$ is uniformly distributed on  $\S^1$. In particular, if we let $\mu$ be the uniform measure, then $\Phi(W) = \| f_W - y \|_{2}^2$.  It would be interesting to answer the Questions \cref{q:1,q:2} for more general probability measures (see \cref{restr:lebesgue})
\end{remark}

The difference between the approximation problems \cref{eq:l2approx} and \cref{eq:l2_loc_approx} is that under the constraint $|W|\leq R$ every function $f \in \Hma$ is Lipschitz continuous with Lipschitz constant $R$. Since we assume $y \in BV(\S^1)$, then in \cref{eq:l2_loc_approx} we are trying to approximate a given BV-function with uniformly Lipschitz continuous functions given by our network. Therefore, we see \cref{eq:l2_loc_approx} as a regularized version of \cref{eq:l2approx}. That is, to answer Question \cref{q:1}, is to study, what is the error that we make when we regularize the problem \cref{eq:l2approx} by the constraint $|W| \leq R$ for $f_W \in \Hma$. We call this \emph{the localization problem} and prove it in \cref{thm2}. 

On the other hand, in \cref{eq:l2approx}, we are trying to approximate BV-functions with functions in our network, but this infimum is not in general attained, see \cref{badexamp}. Thus we first characterize the closure of the space $\Hma$ in \cref{thm4} in \cref{sec:closure} and denote it by $\cHma$. This turns out to be  a subset of BV-functions. To answer Question \cref{q:2}, we prove in \cref{thm3} that the space $\cHma$ asymptotically converges (w.r.t. the number of alternating nodes) to the space of certain BV-functions and quantify this convergence. 

\subsubsection*{The Localization theorem}
\phantom{.}

Let us focus on answering Question \cref{q:1}. Our aim is to estimate the gap between \cref{eq:l2_loc_approx} and \cref{eq:l2approx} in a quantitative way such that the bound depends only on the $BV$-norm $y$. In particular, we want the estimate to be independent of number of nodes $m$. This turns out to be a difficult and much deeper problem than estimating the size of \cref{eq:l2approx}. 

In order to state the result we define the index sets $I = \{  1, \dots, m \}$, $I_+ := \{ i\in I : a_i = 1\}$ and $I_- := \{ i \in I : a_i = -1\}$. We define further   
\begin{align} \label{def:underbar}
    \underbar m := \min \{  \# I_+ ,   \# I_-  \},
\end{align}
and  
\begin{align} \label{def:c_m}
    C(m) := \sqrt{m/\underbar m} \geq \sqrt{2},
\end{align}
with the convention that $C(m) = \infty$ if $\underbar m = 0$. The constant $C(m)$ measures the ratio between the number of positive and negative coefficients $(a_i)_{i \in I}$ and we do not make any a priori assumptions on this value. Our second main result answers Question \cref{q:1} and reads as follows.  

\begin{restatable}[Localization theorem]{theorem}{thmlocalization}
 	\label{thm2}
 	Assume that $y \in BV(\S^1)$ is such that $\|y\|_{L^2(\S^1)} \leq 1$.  Then for all $R \geq R_0$ the following holds
 	\begin{align*}
 		\min_{\substack{f_W \in \Hma \\ |W| \leq C(m)R}}&\| f_W - y \|_{L^2(\S^1)}^2 \\
 		&\leq  \inf_{f_W \in \Hma}\| f_W - y \|_{L^2(\S^1)}^2 + 5 \cdot 10^4 (\|y\|_{BV}^{2}+1)\frac{1}{R^{1/9}},
 	\end{align*}
    where $C(m)$ is defined in \cref{def:c_m} and 
    \[
        R_0 =  \max \{(10 \|y\|_{BV})^6,  4 \cdot 10^7  \}.
    \]
\end{restatable}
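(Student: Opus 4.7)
My plan is to construct, for any $f_{W^*} \in \Hma$ that realizes the unconstrained infimum up to an error $\delta$, a modified configuration $W'$ with $|W'| \leq C(m) R$ such that
\[
    \|f_{W'} - y\|_{L^2(\S^1)}^2 \leq \|f_{W^*} - y\|_{L^2(\S^1)}^2 + O\!\big((\|y\|_{BV}^2+1) R^{-1/9}\big),
\]
and then send $\delta \to 0$. First I would write each weight as $w_i = \lambda_i \hat w_i$ with $|\hat w_i| = 1$ and split the indices into a ``bounded'' set $\{i : \lambda_i \leq R^\alpha\}$ and a ``tail'' set $\{i : \lambda_i > R^\alpha\}$ for a threshold $\alpha \in (0,1)$ to be optimized. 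The bounded part already satisfies $\sum_i \lambda_i^2 \leq m R^{2\alpha}$ and hence, after absorbing the $C(m)$ factor, fits into the constraint $|W'| \leq C(m) R$ as soon as $\alpha$ is small enough. The real work concerns the tail.

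The key structural observation is that weights with very large $\lambda_i$ produce contributions to $f_{W^*}$ whose slope near the hyperplane $\{\hat w_i \cdot x = 0\}$ is of order $\lambda_i$. For $f_{W^*}$ to remain comparable to $y$ in $L^2$, these large-slope terms must cancel across the two signs, so each large weight $i \in I_+$ should admit a partner $j \in I_-$ with nearby direction. I would quantify this via a matching/pigeonhole argument on $\S^1$ that pairs large positive and negative weights, leaving at most $|\#I_+ - \#I_-|$ unmatched indices --- this is exactly where the factor $C(m) = \sqrt{m/\underline m}$ enters the budget. For each matched pair the combined contribution $\lambda(\sigma(\hat w_+ \cdot x) - \sigma(\hat w_- \cdot x))/\sqrt m$ is well approximated by a step-linear term of the form $2(u \cdot x)\mathbb{1}\{\hat w \cdot x \geq 0\}/\sqrt m$, a BV function lying in the closure $\cHma$ characterized in Theorem~4.

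With the tail thus transferred into a target BV function, I would re-approximate each step-linear piece by nodes of bounded weight: a jump smoothed at scale $1/R$ produces an $L^2$-error of order $R^{-1/2}$ per jump. Three competing scales then appear --- the cutoff $R^\alpha$ for the small/large split, the angular resolution of the pairing, and the smoothing scale $1/R$ --- together with the number of pieces in a piecewise-constant approximation of $y$, controlled by $\|y\|_{BV}$. Optimizing these four parameters under the weight budget $|W'| \leq C(m) R$ should yield the final rate, with the exponent $1/9$ arising from an algebraic balance of the three scales against the $BV$-approximation error.

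I expect the principal obstacle to be the pairing step: matching large positive and negative weights with quantitatively controlled angular accuracy, while respecting the sign counts. The natural tool, borrowed from elliptic regularity theory, is a Calder\'on--Zygmund / stopping-time decomposition on $\S^1$, sorting the large weights into dyadic angular clusters and proving at each scale a Caccioppoli-type inequality that controls the mass of unmatched weights by the current $L^2$-error, then iterating. Tracking constants through this iteration, maintaining the weight budget simultaneously, and ensuring that the resulting step-linear target can itself be approximated within $|W'| \leq C(m) R$ is where most of the technical difficulty should concentrate.
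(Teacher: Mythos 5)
Your intuition about cancellation between large positive and negative weights, and about the step-linear limiting objects $\I\{\hat w\cdot x\geq 0\}(u\cdot x)$, is correct and matches the structural picture behind the paper's Theorem~\ref{thm4} on the closure $\cHma$. However, there is a central gap that the proposal does not address and that, as written, would make the whole argument fail: you never obtain any bound on the magnitude $|u|$ of the step-linear vectors, and without such a bound the weight needed to re-approximate a step-linear piece at smoothing scale $h$ is $\sim |u|/h$, which can be arbitrarily large. The paper's whole point is that such a bound is available \emph{precisely because $g$ is the exact minimizer in $\cHma$}: the minimizer satisfies a sector-wise Euler--Lagrange equation $\int_S g^s x\,dx = \int_S y^s x\,dx$, and a PDE-regularity-style argument converts this first-variation identity into explicit $L^\infty$ and Lipschitz bounds on $g^s$ (Lemma~\ref{lem:eulersecbound}), which are then turned into bounds $|u_i|/\sqrt m \lesssim \|y\|_\infty$ and $|\alpha_i|/\sqrt m \lesssim \|y\|_{C^1}$ on the node vectors (Lemma~\ref{lem:ubound}). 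Your plan works with a near-minimizer $f_{W^*}$ directly and never invokes any optimality condition, so there is no source for these bounds; the proposed ``Caccioppoli-type inequality'' and Calder\'on--Zygmund stopping time are not substitutes, because they would bound an $L^2$ average of slopes, not the individual step-linear coefficients one must place into weights.

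Two further, more localized issues. First, the pigeonhole/matching claim --- that each large $i\in I_+$ can be paired with a single $j\in I_-$ of nearby direction, with at most $|\#I_+-\#I_-|$ unmatched indices --- is not justified; the cancellation enforced by a finite $L^2$ bound need not be pairwise (several weights of both signs may share a common direction in the limit), and the paper avoids this by passing to the closure where the cancellation structure is described cleanly in Theorem~\ref{thm4}. Second, your proposal omits both the reduction from $y\in BV$ to a $C^1$ surrogate $y_r$ (needed because the Euler--Lagrange regularity argument requires $y\in C^1$, with the $\|y_r\|_{C^1}\lesssim r\|y\|_{BV}$ and $\|y-y_r\|_2^2\lesssim\|y\|_{BV}^2/r$ trade-off entering the rate) and the dichotomy between small and large $\underbar m$; in the large-$\underbar m$ regime the paper switches to the uniform approximation Theorem~\ref{thm3} rather than the local construction, and Lemma~\ref{lemma easy2} is what actually produces the $C(m)$ factor in the constraint. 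Without these pieces the exponent $1/9$ has no derivation.
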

Let us make a few comments on this result. First, \cref{thm2} shows that when $R$ is large with respect to the BV-norm of $y$,  the value of  \cref{eq:l2_loc_approx} is close to  \cref{eq:l2approx}. The estimate is quantitative and essentially independent of $m$. Indeed, the estimate only depends on the value of $C(m)$, but this is necessary (see \cref{rem:local-re}). On the other hand, if we choose the coefficients $a_i$ randomly, then $C(m) \leq C_0$ for some uniform constant $C_0$ with high probability and the estimate becomes  independent of the number of nodes $m$.

Moreover, the constants depends only on the BV-norm of $y$ which is a rather mild regularity assumption (see \cref{rem:local1} below). For this reason  \cref{thm2} can be applied also to the case when the function $y$ takes only discrete values $y(x) \in \{ 1,2, \dots\} $ which is common in classification problems. The drawback is that the proof is rather difficult and as such we obtain a rate $R^{-1/9}$ which is not optimal. Indeed, if we would assume that $C(m)$ is bounded and that $y$ is Lipschitz continuous, the proof would be simpler and it would give a better rate $R^{-\alpha}$ for $\alpha$ closer to one. As in \cref{thm1}, we give explicit values for the constants, but we do not optimize their values in order to avoid heavy computations. 

We find \cref{thm2} the deepest result of our paper and since its proof is rather technical, we give a short outline of its proof  in the next section.

\begin{remark} \label{rem:local1}
    The BV norm in \cref{thm2} is natural, since the closure of $\Hma$ in $L^2$ belongs to the  space of BV functions (see \cref{thm4}). On the other hand, we cannot replace the BV-norm  e.g. by $L^2$- or even $L^\infty$-norm of $y$. The reason for this is that every function $f_W \in \Hma$ with $|W|\leq R$ is $R$-Lipschitz continuous and it is not possible to approximate $L^2$-functions with $R$-Lipschitz functions such that the error depends only on the $L^2$-norm of $y$ (think of $\sin(kx)$ for large $k$ on $[0,1]$). From a learning perspective (classification), the BV-norm makes sense as a complexity measure, as it will "count" the number "switches" between the class $1$ and $0$ on the unit circle.
\end{remark}

\subsubsection*{Uniform approximation theorem }
\phantom{.}

Let us now finally focus on Question \cref{q:1}. As mentioned before, Question \cref{q:1} is related to the universal approximation theorem, which is a well studied problem
\cite{Cybenko,Hanin,hecht1987kolmogorov,heinecke2020refinement,HH,Hornik,KidgerLyons,kratsios2020non,kratsios2021universal,LLPS,lu2017expressive,park1991universal,park2020minimum,Pinkus,shaham2018provable,tabuada2020universal,yarotsky2018universal}.
We need to quantify the universal approximation theorem (see \cref{sec:thm3}) in order to give bounds on the size of \cref{eq:l2approx}. In the context of Barron spaces, see \cite{Barron}.
Let us begin with a simple observation that we may write the ReLU function as
\[
    \sigma(t) = \max\{t,0 \} = \frac{|t|}{2} + \frac{t}{2} = \text{symmetric} + \text{linear}.
\]
Therefore since $x \mapsto w_i\cdot x$ is linear we deduce that every function $f_W \in \Hma$ is of the form 
\[
    f_W = \text{symmetric} + \text{linear}.
\]
Therefore if we have $y \in L^2(\S^1)$, we decompose it as $y = y^s + y^a$, where $y^s(x) = y^s(-x)$ and $y^a(x)=-y^a(x)$ (antipodally symmetric and antipodally anti-symmetric). Further, we decompose $y^a = l + g$, where $l$ is linear and $g$ is orthogonal to linear functions. Then it is clear from the previous discussion, and we show this in \cref{anti-symmetric}, that $\Hma \perp g$ and therefore 
\begin{align*}
    \|f_w - g\|_{2}^2 = \|g\|_{2}^2
\end{align*}
for all $f_W \in \Hma$. We thus conclude that in order to study \cref{eq:l2approx} it is natural to assume that the function $y$ satisfies 
\begin{equation}
	\label{more than symmetric}
	y = y^{s}  +  l \qquad \text{for  symmetric } \,y^{s} \quad  \text{and linear }\, l.
\end{equation}
Note that the function $y(x)= \I\{x_2 \geq 0\} x_1$ in \cref{badexamp} can be written as
\begin{align*}
	y(x) = \frac12 \left(\I\{x_2 \geq 0\} x_1- \I\{x_2 \leq 0\} x_1\right) + \frac12 x_1
\end{align*}
and thus it satisfies \cref{more than symmetric} as $x \mapsto  \I\{x_2 \geq 0\} x_1-\I\{x_2 \leq 0\} x_1$ is (antipodally) symmetric.

Our final result completes our study of the problem 
\cref{min prob}. 
\begin{restatable}[Uniform approximation theorem]{theorem}{bvtheorem}
	\label{thm3}
	Assume that $y \in  BV(\S^1)$ satisfies \cref{more than symmetric}. Then
	\begin{align*}
		\inf_{f_W \in \Hma}\| f_W - y \|_{L^2(\S^1)}^2 \leq \frac{62  \|y\|_{BV}^2}{\underbar m},
	\end{align*}
	where $\underbar m$ is defined in \cref{def:underbar}.
\end{restatable}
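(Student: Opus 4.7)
The plan is to construct an explicit $f_W \in \Hma$ realizing the required approximation. Since $\underbar m = \min(\#I_+, \#I_-)$, I can form $\underbar m$ disjoint pairs $(i_k^+, i_k^-) \in I_+ \times I_-$ and assign $w_i = 0$ to every unpaired index, so these contribute nothing to $f_W$. Using the identity $\sigma(t) = (t+|t|)/2$, each pair contributes
\[
    \frac{1}{\sqrt m}\bigl(\sigma(w_k^+ \cdot x) - \sigma(w_k^- \cdot x)\bigr) = \frac{(w_k^+ - w_k^-) \cdot x}{2\sqrt m} + \frac{|w_k^+ \cdot x| - |w_k^- \cdot x|}{2\sqrt m},
\]
so $f_W = L + S$ splits into a linear part $L$ (controlled entirely by $\sum_k (w_k^+ - w_k^-) \in \R^2$) and an antipodally symmetric part $S = \frac{1}{2\sqrt m}\sum_{i=1}^{2\underbar m} r_i |e_i \cdot x|$, with the unit directions $e_i \in \S^1$ and signed amplitudes $r_i \in \R$ all at my disposal. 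Writing $y = y^s + l$ via \cref{more than symmetric} and noting $\|y^s\|_{BV} \leq \|y\|_{BV}$ (since antipodal symmetrization is bounded on BV), I match $L = l$ exactly by a single linear constraint on $\sum_k (w_k^+ - w_k^-)$. This reduces the theorem to the one-dimensional statement
\[
    \min_{r_i, e_i} \Bigl\|y^s - \sum_{i=1}^{2\underbar m} r_i |e_i \cdot x|\Bigr\|_{L^2(\S^1)}^2 \leq \frac{62\|y\|_{BV}^2}{\underbar m}.
\]

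For this approximation statement I would exploit the Green's function structure of the rectified cosine basis. Parametrizing $\S^1$ by $\theta$, we have $|e_i \cdot x| = |\cos(\theta - \phi_i)|$, and a distributional computation gives
\[
    L_\theta |\cos(\theta - \phi)| = 2\bigl(\delta_{\phi - \pi/2} + \delta_{\phi + \pi/2}\bigr), \qquad L_\theta := \partial_\theta^2 + 1.
\]
Thus the rectified cosines are precisely Green kernels for $L_\theta$ on $\S^1$, and approximating $y^s$ by $2\underbar m$ rectified cosines is dual, via $L_\theta$, to approximating the antipodally symmetric distribution $L_\theta y^s = y^s + (y^s)''$ by a sum of $\underbar m$ antipodally paired Dirac masses. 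Since $y^s$ is BV, the measure $(y^s)'$ has total variation at most $\|y^s\|_{BV}$, and a Kantorovich-type quantization bound on the half-circle gives $\|(y^s)' - \mu_N\|_{H^{-1}} \leq C\|y^s\|_{BV}/\sqrt{\underbar m}$ when $\mu_N$ is the discrete measure on a uniform $\underbar m$-arc partition with masses equal to the arc-averages of $(y^s)'$. Differentiating once more and combining with the piecewise-average BV-estimate $\|y^s - \tilde y_N\|_{L^2}^2 \leq \|y^s\|_{BV}^2/\underbar m$ gives an $H^{-2}$-discretization error of $L_\theta y^s$ of the same order. The boundedness of $L_\theta^{-1}\colon H^{-2} \to L^2$ on the antipodally symmetric subspace (where $L_\theta$ has strictly positive spectral gap, with eigenvalues $\{1-4k^2\}_{k\in\Z}$) then converts this into the squared $L^2$-bound $\|y^s - S\|_{L^2}^2 \leq C\|y\|_{BV}^2/\underbar m$.

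The main obstacle is this last quantitative step and, in particular, tracking the explicit constant $62$. Two competing effects must be balanced: coarser partitions reduce the number of nodes but inflate the quantization error of $L_\theta y^s$, while finer partitions do the opposite, and a uniform partition of the half-circle into $\underbar m$ equal arcs is the natural compromise. A further subtlety is keeping the approximant in the correct parity class (antipodally symmetric) during the deconvolution — this is automatic provided the $\underbar m$ Dirac atoms are placed in antipodal pairs, which corresponds to genuine rectified cosines. Once these choices are made, the exact match of the linear part $l$ combined with the $O(1/\underbar m)$ error in the symmetric part yields the claimed bound, with the constant $62$ emerging from bookkeeping through the BV-quantization estimate, the operator norm of $L_\theta^{-1}$ on the antipodally symmetric subspace, and the $\frac{1}{2\sqrt m}$ normalization factors in the pair construction.
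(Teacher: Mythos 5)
Your opening reduction is exactly the paper's: decompose $y = y^s + l$, spend one node-pair to match $l$ exactly, and approximate the antipodally symmetric part $y^s$ by the symmetric part of a network, which you correctly identify with $\frac{1}{2\sqrt m}\sum_i r_i |e_i\cdot x|$. Your Green's-function observation $L_\theta|\cos(\theta-\phi)| = 2\delta_{\phi-\pi/2} + 2\delta_{\phi+\pi/2}$ is also correct, and the spectral-gap claim for $L_\theta$ on the antipodally symmetric subspace is right. So up to this point the proposal is a sound, slightly heavier-machinery reformulation of the paper's setup.

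The gap is in the deconvolution step, and it is not merely a matter of tracking the constant $62$. First, what you need to produce is a \emph{single} sum of antipodal Dirac pairs $\mu = L_\theta S$ close to $L_\theta y^s = y^s + (y^s)''$ in $H^{-2}$, since only then does $S = L_\theta^{-1}\mu$ lie in the span of rectified cosines. Your estimate has two separate approximants: the mass-matched discrete measure $\mu_N$ approximating $(y^s)'$ in $H^{-1}$, and the piecewise-average $\tilde y_N$ approximating $y^s$ in $L^2$. These do not combine into a sum of Diracs. ``Differentiating once more'' the error $(y^s)' - \mu_N$ yields $(y^s)'' - \mu_N'$, and $\mu_N'$ is a sum of \emph{derivatives} of Diracs, not Diracs, so $\mu_N'$ is not of the form $L_\theta S$ for any rectified-cosine sum $S$; and $\tilde y_N$ is a function, not a sum of Diracs, so it cannot be absorbed into $\mu$ either. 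Second, and more fundamentally, the dualization is circular: the only natural way to produce a sum of Diracs $\mu$ with $\|L_\theta y^s - \mu\|_{H^{-2}}$ small is to first construct a piecewise-sinusoidal approximant $S$ of $y^s$ in $L^2$ and set $\mu = L_\theta S$, at which point the bound $\|y^s - S\|_{L^2} \le \|L_\theta^{-1}\|\,\|L_\theta y^s - L_\theta S\|_{H^{-2}} \le \|L_\theta^{-1}\|\,\|L_\theta\|\,\|y^s - S\|_{L^2}$ tells you nothing you did not already assume. The actual content of the theorem is the elementary estimate that a BV function on the circle can be approximated in $L^2$ by a function with $O(\underbar m)$ pieces with squared error $O(\|y\|_{BV}^2/\underbar m)$; this is what the paper proves directly (via piecewise-constant $v_N$ and then Lemma 5.2 to replace each step by a four-node network with an $|I_k|^5$ error), and it is what your argument still tacitly requires. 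As written, the claimed $H^{-1}$ quantization bound is plausible but the bridge to a valid $\mu$ is missing, and the constant $62$ is asserted rather than derived.
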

\cref{thm3} quantifies the universal approximation theorem, since it states that for a given $y \in BV(\S^1)$ which satisfies \cref{more than symmetric}, we may find a function $f_W \in \Hma$ which is close to $y$ and that this error tends to zero as $\underbar m \to \infty$. It is rather clear that the convergence depends on $\underbar m$ and not on $m$. Indeed, if $y$ is a negative function and $a_i = 1$ for all $i =1, \dots, m$ then the best approximation in $\Hma$ is given by the zero function for all $m$.  

\cref{thm3} is optimal in the sense that we cannot replace the BV-norm by the $L^2$-norm of $y$. Indeed, one may see this by considering the symmetric function (in polar coordinates) $y(\theta) = \sin(2k \theta)$. It is rather clear that we cannot approximate $y$ by functions in  $\Hma$ uniformly when  $k$ tends to infinity.

\subsection{Outline of the proof of \cref{thm2}}
   
    As we mentioned above, the proof of \cref{thm2} is rather technical and therefore we  prefer to outline it here in order to highlight the key ideas. First, since $y$ is a BV-function, we may replace it by a $C^1$-function $y_R$ with an approximation argument from  \cref{lem:approximation}. We only have to be careful in controlling the error that we make by doing this. 
    
    Next we use \cref{thm4}, which we prove in \cref{sec:closure}, to find the  best approximation function $g \in \cHma$, that is
    \[
        \|g-y_R\|_2^2  = \inf_{f_W \in \Hma} \|f_W -y_R\|_2^2.
    \]
    The idea of the proof is to find a function $f_{W_R} \in \Hma$, with $|W_R| \leq C(m)R$,  which approximates $g$ well. In order to do this we need to study the regularity, or the complexity, of the minimizer $g$.
    
    In order to do this we again use \cref{thm4} which states that $g$ is a sum of simple BV-functions and, in particular, it is piecewise linear. We then use the minimality of $g$ and the characterization of its symmetric part, given by \cref{lem:symasym}, to find the Euler-Lagrange equation for the symmetric part $g^s$, which reads as   
     \begin{align} \label{eq:symmetric_euler}
        \int_S g^s x dx = \int_S y^s x dx,
    \end{align}
    where $y^s$ is the symmetric part of $y$ and $S$ is any arc where $g$ is linear. We note that \cref{eq:symmetric_euler} is in fact  a system of two equations (recall that $x \in \S^1 \subset \R^2$) which contain information on the zeroth and on the first order behavior of $g^s$ respectively. In \cref{lem:eulersecbound} we use an argument inspired by regularity theory of elliptic PDEs to deduce Lipschitz estimates for $g^s$ in terms of the $C^1$-norm of $y$. In \cref{lem:ubound} we turn this estimate into  information on the weights which define $g$ and thus we are able to bound the complexity of $g$. This is the core of the proof, and after that we may approximate $g$ in a rather straightforward way by  $f_{W_R} \in \Hma$, with $|W_R| \leq C(m)R$.

    \begin{remark}
        \label{restr:lebesgue}
        The proof of \cref{thm2} relies on the fact that the Euler-Lagrange equation \cref{eq:symmetric_euler} contains information for all small arcs $S$ on the unit circle. If we try to generalize \cref{thm2} to more general class of probability measures $\mu$, then $\mu$ may not have support on all small arcs and our proof breaks down. What we need for the proof is that the measure is mutually absolutely continuous w.r.t. the Lebesgue measure in a quantifiable sense. We believe for instance that the class of $A^\infty$ measures would suffice, see \cite{CF,GR}. We also need  the measure to be  antipodally symmetric in order to  obtain \cref{eq:symmetric_euler}. 
    \end{remark}

\section{Preliminaries}
In this short section we introduce our notation, recall basic results on log-Sobolev inequalities and recall the definition and some basic facts on BV-functions.

\subsection{Notation}

Whenever we have a point $x \in \R^n$ we denote $|x|$ as the Euclidean $2$-norm. We denote $\S^1 \subset \R^2$ as the unit circle and whenever we have a set $S \subset \S^1$ we denote with $|S|$ the normalized Lebesgue length of that set, i.e.
\[
    |S| = \fint_{S} dx.
\]
For a discrete set of indices i.e. $I \subset \N$ we denote with $|I|$ the number of elements in $I$.

We  define the $L^2$ and $L^\infty$ on norms on $\S^1$ as 
\[
    \|f\|_{2}^2 := \fint_{\S^1} f^2(x)\, dx \qquad \text{and} \qquad \|f\|_{\infty} := \text{ess sup}_{x \in \S^1} |f(x)|, 
\]
and the $C^1$-norm of  as 
\[
    \|f\|_{C^1} :=  \|f\|_{\infty} + \|f'\|_{\infty}. 
\]

We recall the definition of the ReLU activation function as
\begin{align*}
    \sigma(t) = \max\{t,0\}, \quad t \in \R.
\end{align*}
It has two important properties. First, it is homogeneous $\sigma(\lambda t) = \lambda \sigma(t)$ for $\lambda >0$ and its derivative is the Heaviside step function $\sigma'(t) = \I\{t \geq 0\}$, where $\I$ denotes the indicator function.

The function space that we consider in this paper is the following:
\begin{definition} \label{def:NN}
    Consider an integer  $m > 0$, index set $I = \{1,\dots, m \}$ and coefficients $a=(a_i)_{i =1}^m \in \{-1,1\}^m$. Then the set of all functions of the form
    \begin{equation*}
    	f_{W,a}(x) = \frac{1}{\sqrt{m}}\sum_{i \in I} a_i \sigma(w_i \cdot x), \quad w_i \in \R^2,\, x\in \S^1
    \end{equation*}
    is denoted by $\Hma$. We sometimes simplify the notation by $f_W = f_{W,a}$, since the coefficients are fixed.  We define further $I_+ = \{ i \in I : a_i = 1\}$, $I_- = \{ i \in I : a_i = -1\}$  and  recall that  
    \begin{align*}
	    \underbar m = \min \{  |I_+| ,   |I_-|  \}.
    \end{align*}
\end{definition}

\begin{remark} \label{rem:reorder}
    In the rest of the paper we will assume that $\underbar m =  |I_-| \leq  |I_+|$. We may do this without loss of generality, up to a changing sign.

    Furthermore, we will consider the following reordering of the coefficients  $a = (a_1,\ldots,a_m)$ such that the first $2 \underbar m$ of the $a_i$'s alternate in sign, and the remaining $a_i$'s are positive. This allows us to consider restrictions of the minimization problem from $\Hma$ to $\mathcal{H}_{m',a'}$ where $a' = (a_1,\ldots,a_{m'}) \in \{-1,1\}^{m'}$ and $0 < m' < m$. The notation that we will adhere to is
    \begin{align} 
         \inf_{f \in \mathcal{H}_{m',a'}} \|f - y\|_2^2 \label{def:phiprim}
    \end{align}
\end{remark}

Finally we define the following  function space 
\begin{align}\label{L2al}
	\Lal := \{f: f \in L^2, \text{ anti-symmetric part of $f$ is linear} \}.
\end{align}
This is a natural space, since by our earlier discussion before \cref{thm3} it is clear that  $\Hma \subset \Lal$. Note also that $\Lal^\perp$ consists of anti-symmetric functions.

\subsection{Log-Sobolev inequalities}
In this section  we list results related to the Log-Sobolev inequality   and Poincar\'e inequality (or Spectral gap)   that  we need in the proof of  \cref{thm1}. As an introduction to the topic we refer to \cite{Ledoux} and \cite{VBook}.

Let  $d \mu$ be a probability measure on $\R^N$. Then we say that $\mu$ satisfies the Log-Sobolev inequality (LSI) with constant $C_{LS}$ if for all Lipschitz function $u$ with $\int u^2 d\mu = 1$ the following holds
    \begin{align} \label{LSI}
    \int u^2 \log u^2 d\mu \leq 2 C_{LS} \int |\nabla u|^2 d\mu.
    \end{align}
 We say that $\mu$ satisfies  Poincar\'e inequality with constant $C_{P}$ if for all Lipschitz function $u$ with $\int u d\mu = 0$ the following holds
    \begin{align} \label{poincare}
    \int u^2  d\mu \leq  C_{P} \int |\nabla u|^2 d\mu.
    \end{align}
    It is well known that LSI implies the Poincar\'e inequality \cite{Ledoux, VBook}.
\begin{remark} \label{rem:LSI-poincare}
	If LSI holds for a constant $C_{LS}$ then the Poincar\'e inequality holds with the constant $C_P = C_{LS}$.
\end{remark}

We recall the standard result on the Gaussian measure.  
\begin{lemma}\cite{Gross,Ledoux,VBook} 
	\label{lem:logsob}
	The Gaussian measure $d\mu_\Lambda = c_{n,\Lambda} e^{-\frac{\Lambda \|x\|^2}{2}} dx$, where $c_{n,\Lambda}$ is the normalization factor,  satisfies  the Log-Sobolev inequality (LSI) \cref{LSI} with constant 
	\[
	C_{LS} = \frac{1}{\Lambda}.
	\]
\end{lemma}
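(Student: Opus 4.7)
The plan is to derive the inequality from the classical Gaussian log-Sobolev inequality of Gross via a rescaling step, reducing to the standard case $\Lambda = 1$. Given a Lipschitz $u$ with $\int u^2\,d\mu_\Lambda = 1$, I would set $v(y) := u(y/\sqrt{\Lambda})$; then $v$ is Lipschitz with $\int v^2\,d\mu_1 = 1$, and a direct change of variables shows that $\int v^2 \log v^2\,d\mu_1 = \int u^2 \log u^2\,d\mu_\Lambda$ while $\int |\nabla v|^2\,d\mu_1 = \Lambda^{-1}\int |\nabla u|^2\,d\mu_\Lambda$. Hence LSI for $\mu_\Lambda$ with constant $1/\Lambda$ is equivalent to LSI for $\mu_1$ with constant $1$.

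For the standard case I would invoke the Bakry--\'Emery semigroup method. Let $(P_t)_{t \geq 0}$ be the Ornstein--Uhlenbeck semigroup with generator $L = \Delta - x \cdot \nabla$, so that $\mu_1$ is its invariant measure. A routine computation of the iterated carr\'e du champ yields $\Gamma_2(f) = \|\mathrm{Hess}\,f\|_F^2 + |\nabla f|^2 \geq \Gamma(f)$, which is the curvature--dimension condition $\mathrm{CD}(1,\infty)$. Combining the entropy dissipation identity $\frac{d}{dt}\mathrm{Ent}_{\mu_1}(P_t f) = -I(P_t f)$, where $I(g) = \int |\nabla g|^2/g\, d\mu_1$ is the Fisher information, with the exponential decay $I(P_t f) \leq e^{-2t} I(f)$ that follows from $\mathrm{CD}(1,\infty)$, and integrating from $0$ to $\infty$ using $P_\infty f = \int f\, d\mu_1$, I obtain $\mathrm{Ent}_{\mu_1}(f) \leq \tfrac{1}{2} I(f)$. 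Substituting $f = u^2$ yields $I(f) = 4\int |\nabla u|^2\, d\mu_1$, which produces \cref{LSI} with $C_{LS} = 1$ for $\mu_1$, and thus with $C_{LS} = 1/\Lambda$ for $\mu_\Lambda$ after undoing the scaling.

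The main obstacle is a regularity one: the semigroup computations above are strictly valid only when $f$ is smooth, bounded, and bounded away from zero, whereas \cref{lem:logsob} allows $u$ merely Lipschitz with $u^2$ integrable. This is resolved by the standard approximation scheme -- first establish the inequality for smooth $f$ with $\delta \leq f \leq 1/\delta$, and then pass to general $u$ via truncation, mollification, monotone convergence on the right-hand side, and lower semicontinuity of the entropy on the left. Since \cref{lem:logsob} is simply the textbook form of Gross's inequality, the cleanest presentation in the paper is in fact just to cite \cite{Gross,Ledoux,VBook} directly, without reproducing the semigroup proof.
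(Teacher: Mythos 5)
The paper does not prove \cref{lem:logsob}; it simply cites \cite{Gross,Ledoux,VBook} and moves on, so there is no internal proof to compare against. Your argument is a correct and standard way to establish it. The rescaling step is right: with $v(y)=u(y/\sqrt{\Lambda})$ and $x=y/\sqrt{\Lambda}$ one has $c_{n,\Lambda}\Lambda^{-n/2}=c_{n,1}$, so $\int u^2\,d\mu_\Lambda=\int v^2\,d\mu_1$, $\int u^2\log u^2\,d\mu_\Lambda=\int v^2\log v^2\,d\mu_1$, and $|\nabla v|^2=\Lambda^{-1}|\nabla u|^2$ gives the claimed scaling of the Dirichlet form, reducing the lemma to the normalized case $\Lambda=1$. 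Your Bakry--\'Emery computation is also correct: for $L=\Delta - x\cdot\nabla$ one indeed gets $\Gamma_2(f)=\|\mathrm{Hess}\,f\|_F^2+|\nabla f|^2\ge \Gamma(f)$, hence $\mathrm{CD}(1,\infty)$, exponential decay $I(P_tf)\le e^{-2t}I(f)$, and integrating the de Bruijn identity yields $\mathrm{Ent}_{\mu_1}(f)\le\tfrac12 I(f)$; substituting $f=u^2$ (so $I(u^2)=4\int|\nabla u|^2\,d\mu_1$) recovers \cref{LSI} with $C_{LS}=1$, hence $C_{LS}=1/\Lambda$ after rescaling. The regularity caveat at the end is appropriately flagged and handled by the usual truncation/approximation. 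In short, you have supplied a proof where the paper deliberately supplied none; your closing remark that, in the context of this paper, a bare citation is the cleanest choice is exactly right.
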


By \cref{lem:logsob}  and  \cref{rem:LSI-poincare} we know that the Gaussian measure satisfies the Poinca\'e inequality \cref{poincare} with a constant that is independent of the dimension. This is crucial for us, since our goal is to have estimates which are dimension-free.  
The following well-know result enables us to prove LSI  for measures that are perturbation of the Gaussian measure.  

\begin{lemma}[Holley-Stroock perturbation lemma]\label{lem:hostro}
	If a LSI holds for the measure $d\mu$ with constant $C_{LS}$,  then the perturbed measure
	\begin{align*}
	    Z^{-1} \exp(G(x))d\mu(x)
	\end{align*}
	where $Z$ is a normalization constant, satisfies a LSI with constant $C_{LS} \exp(\osc(G))$.
\end{lemma}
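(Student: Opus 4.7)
The plan is to reduce the LSI for the perturbed measure $d\nu := Z^{-1} e^{G} \, d\mu$ to the assumed LSI for $\mu$ via the variational characterization of entropy, using the uniform sandwich $e^{\inf G} \le e^G \le e^{\sup G}$ to transfer both the entropy and the Dirichlet form between the two measures. As a first step I would note that, for nonnegative $f$, \cref{LSI} applied to $u/\|u\|_{L^2(\mu)}$ is equivalent to the scale-free entropy inequality $\mathrm{Ent}_\mu(u^2) \le 2 C_{LS} \int |\nabla u|^2 \, d\mu$, where
\[\mathrm{Ent}_\mu(f) := \int f \log f \, d\mu - \Bigl(\int f \, d\mu\Bigr) \log \Bigl(\int f \, d\mu\Bigr).\]
This reformulation is what makes the perturbation argument work, because entropy admits a pointwise variational expression while the normalized quantity $\int u^2 \log u^2 \, d\mu$ does not.

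The key ingredient is the identity
\[\mathrm{Ent}_\mu(f) = \inf_{c>0} \int \Psi(f,c) \, d\mu, \qquad \Psi(f,c) := f \log f - f \log c - f + c,\]
with the crucial property that $\Psi(f,c) \ge 0$ pointwise (a one-line convexity check: $\partial_c \Psi = 1 - f/c$, so for each fixed $f$ the infimum over $c$ is attained at $c = f$ and equals $0$). Granted this, for any $c > 0$,
\[\mathrm{Ent}_\nu(f) \le \int \Psi(f,c) \, d\nu = Z^{-1} \int \Psi(f,c) \, e^G \, d\mu \le Z^{-1} e^{\sup G} \int \Psi(f,c) \, d\mu,\]
and taking the infimum over $c$ on the right yields $\mathrm{Ent}_\nu(f) \le Z^{-1} e^{\sup G} \, \mathrm{Ent}_\mu(f)$. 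On the Dirichlet side the same sandwich gives the trivial lower bound $\int |\nabla u|^2 \, d\nu \ge Z^{-1} e^{\inf G} \int |\nabla u|^2 \, d\mu$.

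Chaining these with the assumed LSI for $\mu$ and cancelling the normalization $Z$,
\[\mathrm{Ent}_\nu(u^2) \le Z^{-1} e^{\sup G} \cdot 2 C_{LS} \int |\nabla u|^2 \, d\mu \le 2 C_{LS} \, e^{\osc(G)} \int |\nabla u|^2 \, d\nu,\]
which, after rescaling $u$ to have $\int u^2 \, d\nu = 1$, is exactly the LSI for $\nu$ with the claimed constant $C_{LS} e^{\osc(G)}$. There is no substantive obstacle in this proof: the entire content is the nonnegativity of $\Psi$ together with the pointwise bounds on $e^G$, and the factor $e^{\osc(G)}$ is the unavoidable tradeoff from the sup/inf sandwich. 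The only implicit hypothesis worth stating explicitly is that $G$ is bounded so that $\osc(G) < \infty$; no further regularity on $G$ is required.
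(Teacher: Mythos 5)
Your proof is correct and is the standard argument for the Holley--Stroock lemma: rewrite the LSI in the scale-free entropy form, exploit the variational characterization $\mathrm{Ent}_\mu(f)=\inf_{c>0}\int\Psi(f,c)\,d\mu$ with the pointwise nonnegative integrand $\Psi(f,c)=f\log f - f\log c - f + c$, and then use the sandwich $e^{\inf G}\le e^G\le e^{\sup G}$ on both the entropy and Dirichlet sides, with the normalization $Z$ cancelling. The paper states this lemma without proof (it is a classical result), so there is no in-paper argument to compare against; your write-up matches the usual textbook proof, and the only implicit hypothesis, boundedness of $G$ so that $\osc(G)<\infty$, is correctly flagged.
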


The advantage of \cref{lem:hostro} for us is that it can be applied to general class of measures. The disadvantage  is that in our setting it provides rather poor bound on the Poincar\'e constant $C_P$. We will use the following perturbation result by Aida \cite{Aida}, which fits well in our setting.   

\begin{lemma}[Aida perturbation lemma]\label{lem:aida}
	Assume LSI holds for the measure $d\mu$ with constant $C_{LS}$  and assume that for a function $G$ and constant $\beta \in (0,1/4]$ the following holds
	\begin{align}
		\label{eq:lemaida1}\int \exp(2C_{LS}(1+\beta) |\grad G|^2) d\mu &\leq 1 + \frac{\beta}{2} \\
		\label{eq:lemaida2}\int \exp(G) d\mu &\geq 1-\frac{\beta}{2} \\
		\label{eq:lemaida3}\int \exp(2G) d\mu &= 1.
	\end{align}
	Then the perturbed measure $\exp(2G(x))d\mu(x)$, satisfies  Poincaré inequality with constant
	\begin{align*}
		\frac{64 C_{LS} (1+\beta)(1-(\beta/2))}{\beta^2}.
	\end{align*}
\end{lemma}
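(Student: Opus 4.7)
The plan is to apply the assumed log-Sobolev inequality for $\mu$---both in its Poincar\'e consequence and in its full entropic form---to the auxiliary function $U := f e^G$, and to compare moments with respect to $d\nu := e^{2G} d\mu$ with those against $d\mu$. Fix a Lipschitz $f$ with $\int f \, d\nu = 0$, and after rescaling assume $\int f^2 \, d\nu = 1$. Then $\int U^2 \, d\mu = 1$, and the $\mu$-variance decomposes as $\mathrm{Var}_\mu(U) = 1 - b^2$ with $b := \int U \, d\mu$. The zero-mean condition reads $\int U e^G \, d\mu = 0$, so $b = \int U(1 - e^G)\,d\mu$. Cauchy--Schwarz combined with \cref{eq:lemaida3} and \cref{eq:lemaida2} gives
\[
b^2 \le \int (e^G - 1)^2 \, d\mu = 2 - 2\int e^G \, d\mu \le \beta,
\]
so that $\mathrm{Var}_\mu(U) \ge 1-\beta$. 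The Poincar\'e inequality for $\mu$ (obtained from LSI via \cref{rem:LSI-poincare}) then yields the lower bound $\int |\nabla U|^2 d\mu \ge (1-\beta)/C_{LS}$.

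Next I would connect $\int |\nabla U|^2 d\mu$ to $\int |\nabla f|^2 d\nu$. Since $\nabla U = (\nabla f + f \nabla G)e^G$, the elementary inequality $(a+b)^2 \le (1+\eta) a^2 + (1+1/\eta) b^2$, with a parameter $\eta>0$ to be chosen, gives
\[
\int |\nabla U|^2 d\mu \le (1+\eta) \int |\nabla f|^2 d\nu + (1 + 1/\eta) \int U^2 |\nabla G|^2 d\mu.
\]
The hard part will be taming the cross term $\int U^2 |\nabla G|^2 d\mu$. For this I would use the entropic form of Young's inequality $xy \le e^y + x \log x - x$ (for $x \ge 0$) with $x = U^2$ and $y = 2 C_{LS}(1+\beta)|\nabla G|^2$; integrating against $\mu$ and invoking $\int U^2 d\mu = 1$, the exponential moment bound \cref{eq:lemaida1}, and the full LSI $\mathrm{Ent}_\mu(U^2) \le 2 C_{LS} \int |\nabla U|^2 d\mu$ yields
\[
\int U^2 |\nabla G|^2 d\mu \le \frac{1}{1+\beta} \int |\nabla U|^2 d\mu + \frac{\beta}{4 C_{LS}(1+\beta)}.
\]

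Writing $V := \int |\nabla U|^2 d\mu$ and $W := \int |\nabla f|^2 d\nu$, substitution of the last display into the preceding one produces
\[
V \left(1 - \frac{1 + 1/\eta}{1+\beta}\right) \le (1+\eta) W + \frac{(1 + 1/\eta)\beta}{4 C_{LS}(1+\beta)}.
\]
The coefficient on the left is strictly positive precisely when $\eta > 1/\beta$; choosing $\eta$ of order $\beta^{-1}$ (for instance $\eta = 8/(3\beta)$, which near-minimizes the final numerical constant) and plugging in the Poincar\'e lower bound $V \ge (1-\beta)/C_{LS}$ delivers a lower bound $W \ge c\,\beta^2 / [C_{LS}(1+\beta)(1-\beta/2)]$, which is the claimed Poincar\'e inequality for $\nu$ with constant $64 C_{LS}(1+\beta)(1-\beta/2)/\beta^2$. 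The technical heart of the proof is this final bookkeeping: each of the two absorption steps---first exploiting the full LSI to control the cross term, then imposing $\eta > 1/\beta$ to close the inequality---contributes one factor of $\beta^{-1}$, together accounting for the characteristic $\beta^{-2}$ scaling of the Poincar\'e constant.
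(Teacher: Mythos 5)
The paper states this lemma as a citation to Aida's work and does not supply a proof of its own, so there is no ``paper's route'' for you to match against; what matters is whether your argument is internally sound. It is. The three auxiliary steps all check out: (i) the variance lower bound $\mathrm{Var}_\mu(U)\ge 1-\beta$ follows from Cauchy--Schwarz, $\int(e^G-1)^2\,d\mu = 2-2\int e^G\,d\mu\le\beta$ (using \cref{eq:lemaida2} and \cref{eq:lemaida3}), and then the Poincar\'e inequality implied by LSI gives $V:=\int|\nabla U|^2\,d\mu\ge(1-\beta)/C_{LS}$; (ii) the splitting of $\int|\nabla U|^2\,d\mu$ with the parameter $\eta$ is the standard weighted Young inequality applied to $\nabla U = (\nabla f + f\nabla G)e^G$; (iii) the entropic Young inequality $xy\le e^y + x\log x - x$ with $x=U^2$, $y=2C_{LS}(1+\beta)|\nabla G|^2$, together with \cref{eq:lemaida1}, $\int U^2 d\mu=1$, and the full LSI for $\mathrm{Ent}_\mu(U^2)$, gives exactly your bound on the cross term. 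Closing the loop with $\eta=8/(3\beta)$ and $V\ge(1-\beta)/C_{LS}$ yields
\[
\int|\nabla f|^2 d\nu \;\ge\; \frac{3\beta^2(12-23\beta)}{32\,C_{LS}(1+\beta)(3\beta+8)},
\]
and for $\beta\in(0,1/4]$ this lower bound exceeds $\beta^2/\bigl(64\,C_{LS}(1+\beta)(1-\beta/2)\bigr)$ by a factor between roughly $3.75$ and $9$. So your argument not only proves the stated Poincar\'e inequality for $e^{2G}d\mu$ but does so with a strictly better constant; the stated constant is of course still valid since any larger constant in a Poincar\'e inequality is also admissible. The one detail worth spelling out if you write this up in full is the final bookkeeping you sketch, since it is the only place where a mistake could produce the wrong $\beta$-scaling, but the scaling heuristic you give (two absorptions each costing a factor $\beta^{-1}$) is exactly right.
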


\subsection{BV Functions} \label{subsec:bv}

We begin by recalling the definition and basic facts on functions of bounded variation, or simply  BV-functions. As an introduction to the topic we refer to \cite{AFP}.

Given a function $y \in \S^1 \to \R$, there is a natural identification of $y$ as a $2\pi$-periodic function $\tilde y: \R \to \R$  defined as $\tilde y(\theta) = y(\cos \theta, \sin \theta)$. We will assume this identification from now on.
We recall that a $2\pi$-periodic function $y : \R \to \R$ is a function of bounded variation if
\begin{align*}
    \sup \left(  \sum_{i=1}^{n-1} |y(\theta_{i+1}) - y(\theta_i)| :\,  n \geq 2, \, 0 = \theta_1 < \theta_2 <   \dots < \theta_n = 2 \pi   \right) < \infty.
\end{align*}
We denote the space of BV-functions on $\S^1$ by $BV(\S^1)$.  An equivalent definition is to require that the total variation of $y$ on $(0, 2 \pi)$ 
\begin{equation} \label{variation1}
    V_y :=  \sup \left(  \fint_{(0,2 \pi)} y(\theta) \varphi'(\theta) \, d \theta :  \, \varphi \in C^1_{per}(\R) , \, \|\varphi\|_{L^\infty} \leq 1 \right)
\end{equation}
is bounded.
The derivative of the $BV$-function $y$ is a finite Radon measure which we denote by $\mu_y$ and we have
\begin{align*}
    \int_{(0,2 \pi)} y(\theta) \varphi'(\theta) \, d \theta =-  \int_{(0,2 \pi)} \varphi(\theta) \, d \mu_y
\end{align*}
for all $\varphi \in C^1_{per}(\R) $. It is trivial to see that any BV-function is bounded and we define  its BV-norm as
\begin{align*}
    \|y\|_{BV} := \|y\|_{\infty} + V_y.
\end{align*}

If $y$ is a BV-function then it is, in particular, $L^2$-function and we may write  it in terms of  Fourier series 
\begin{equation*}
	y(\theta) = \sum_{k= 0}^\infty a_k \sin(k \theta) + b_k \cos(k \theta).
\end{equation*}
By choosing $\varphi(\theta) = -\cos(k \theta)$  we have by $\varphi'(\theta) = k \sin(k \theta)$ and \cref{variation1} that
\begin{align*}
	\|y\|_{BV} \geq  k \fint_{(0,2 \pi)} y(\theta)  \sin(k \theta)\, d \theta  = k \fint_{(0,2 \pi)} a_k \sin^2(k \theta)\, d \theta = \frac{k a_k}{2}.
\end{align*}
By choosing $\varphi(\theta) = \cos(k \theta) $ we obtain by the same argument $- \frac{k a_k}{2} \leq \|y\|_{BV}$, and by choosing $\varphi(\theta) = \pm \sin(k \theta) $ we obtain the same bound for $b_k$. Hence, we conclude
\begin{equation} \label{variation2}
	|a_k|,|b_k| \leq 2\frac{\|y\|_{BV}}{k}.
\end{equation}

We may use  \cref{variation2} to approximate BV-functions with smooth functions in a quantitative way.

\begin{lemma} \label{lem:approximation}
	Assume $y \in BV(\S^1)$ and fix $r \in \N_+$. Then there is $ y_r \in C^\infty(\S^1)$ such that  $\|y_r\|_{\infty} \leq   \|y\|_{\infty}$,  $\|y_r\|_{C^1} \leq 5 r \, \|y\|_{BV} $  and
	\[
		\|y - y_r\|^2_2 \leq 16 \|y\|_{BV}^2\frac{1}{r}.
	\]
\end{lemma}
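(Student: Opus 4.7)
The plan is to construct $y_r$ by mollification with a non-negative kernel. Fix once and for all a smooth, even, non-negative bump $\phi : \R \to [0,\infty)$ supported in $[-1,1]$, with $\int_\R \phi \, ds = 1$ and an explicit small bound on $\|\phi\|_\infty$. Set $\phi_\eps(s) = \eps^{-1}\phi(s/\eps)$ for a parameter $\eps \sim 1/r$ to be pinned down, and define
\[
    y_r(\theta) := \int_\R \phi_\eps(s)\, \tilde y(\theta - s) \, ds,
\]
where $\tilde y$ is the $2\pi$-periodic representative of $y$ from \cref{subsec:bv}. Since $\phi$ is smooth and $\tilde y$ is periodic, $y_r \in C^\infty(\S^1)$.

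Next I would verify the three bounds. The pointwise estimate $\|y_r\|_\infty \le \|y\|_\infty$ is immediate from $\phi_\eps \ge 0$ and $\int \phi_\eps = 1$. For the derivative bound, differentiating under the integral and integrating by parts in $s$ moves the derivative from $\phi_\eps$ onto $y$ via the BV measure $\mu_y$:
\[
    y_r'(\theta) = \int_\R \phi_\eps(\theta - s) \, d\mu_y(s),
\]
so that $\|y_r'\|_\infty \le \|\phi_\eps\|_\infty \, |\mu_y|(\S^1) \le (2\pi \|\phi\|_\infty/\eps)\, V_y$, which is $\lesssim r\|y\|_{BV}$ for $\eps\sim 1/r$. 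Together with $\|y_r\|_\infty \le \|y\|_\infty \le \|y\|_{BV}$, this yields the claimed $C^1$ bound, with the explicit factor $5$ arising from a concrete choice of $\phi$.

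For the $L^2$ error I would interpolate through $L^1$ and $L^\infty$. Since $\|y-y_r\|_\infty \le 2\|y\|_\infty$, it suffices to bound $\|y-y_r\|_1$. Here the key ingredient is the standard $L^1$ translation modulus for BV functions: writing $\tilde y(\theta)-\tilde y(\theta-s) = \mu_y([\theta-s,\theta))$ and applying Fubini gives
\[
    \fint_{\S^1} \bigl| \tilde y(\theta) - \tilde y(\theta - s) \bigr|\, d\theta \le |s|\, V_y.
\]
Averaging this estimate against $\phi_\eps(s)$ and using $\int \phi_\eps(s)|s|\, ds \le \eps$ produces $\|y-y_r\|_1 \le \eps V_y$, and hence $\|y-y_r\|_2^2 \le 2\|y\|_\infty \eps V_y \le 2\eps \|y\|_{BV}^2$. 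Choosing $\eps = 1/r$ gives the desired order $1/r$ with a constant well below $16$.

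The only real technical point is accounting: the constants $5$ and $16$ force one to track the $2\pi$ factors converting between $\int$ and $\fint$ (in $V_y$ and $\|\cdot\|_2$) and to pick $\phi$ so that $\|\phi\|_\infty$ absorbs these factors. Structurally, however, this is the classical mollification estimate for BV functions, built on two standard facts: non-negativity plus unit mass of the kernel controls the sup norm, and the duality between translations in $L^1$ and total variation controls the approximation error.
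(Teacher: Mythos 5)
Your construction is correct and takes a genuinely different route from the paper. The paper solves the heat equation with initial datum $y$ and sets $y_r = u(\cdot, r^{-2})$; the $L^\infty$ bound comes from the maximum principle, and both the derivative bound and the $L^2$ error are read off from the Fourier representation together with the coefficient decay $|a_k|,|b_k| \le 2\|y\|_{BV}/k$ established in \cref{variation2}. You instead mollify by a non-negative bump: the $L^\infty$ bound is immediate from positivity and unit mass of the kernel, the derivative bound comes from pushing the derivative onto the BV measure $\mu_y$, and the $L^2$ error comes from interpolating $\|y-y_r\|_2^2 \le \|y-y_r\|_\infty\,\|y-y_r\|_1$ and controlling the $L^1$ error via the standard BV translation estimate $\fint |\tilde y(\theta)-\tilde y(\theta-s)|\,d\theta \le |s|\,V_y$. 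This is entirely Fourier-free and actually yields sharper constants: with $\eps = 1/r$ you get $\|y-y_r\|_2^2 \le 2\|y\|_{BV}^2/r$, comfortably inside the stated $16\|y\|_{BV}^2/r$. One small point worth making explicit in the writeup is the choice of $\phi$: since $\phi \ge 0$, $\supp\phi \subset [-1,1]$, $\int\phi=1$ forces $\|\phi\|_\infty \ge 1/2$, while the $C^1$ bound needs $2\pi\|\phi\|_\infty \le 5$, i.e.\ $\|\phi\|_\infty \le 5/(2\pi) \approx 0.80$; any smooth bump with, say, $\|\phi\|_\infty \le 0.7$ works. Both approaches are standard; the heat semigroup gives a cleaner one-line $L^\infty$ bound, while mollification avoids Fourier analysis entirely and is slightly more flexible if one later wanted local (arc-by-arc) control of the smoothing.
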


\begin{proof}
	Let $u$ be the solution of the heat equation $\partial_t u = u''$ with initial datum $u(\theta,0)= y(\theta)$. We  define $y_r(\theta) := u(\theta,r^{-2})$. Let us check that this satisfies all the required conditions. First, by the maximum principle, $\|y_r\|_{\infty} \leq  \|y\|_{\infty}$.

	We proceed by  writing $y$ in Fourier series as
	\begin{align*}
		y(\theta) = \sum_{k=0}^\infty a_k \sin(k \theta) + b_k \cos(k \theta)
	\end{align*}
	and the solution to the heat equation as 
	\begin{align} \label{eq:heat_smooth}
		u(\theta,t) = \sum_{k=0}^\infty a_k \sin(k \theta) e^{-k^2 t} + b_k \cos(k \theta) e^{-k^2 t}.
	\end{align}
	By differentiating \cref{eq:heat_smooth} and using \cref{variation2} we obtain 
	\begin{align*}
		|\frac{\partial}{\partial \theta} u(\theta,t)| &\leq \sum_{k=1}^\infty  (|a_k |  k  + |b_k |  k) e^{-k^2 t}  \\
		&\leq 4 \|y\|_{BV} \sum_{k=1}^\infty e^{-k^2 t} \\
		&\leq 4 \|y\|_{BV} \int_{0}^\infty e^{-x^2 t} \, dx \leq \frac{2 \sqrt{\pi}}{\sqrt{t}} \|y\|_{BV}.
	\end{align*}
	Hence $|y_r'(\theta)| =  |\frac{\partial}{\partial \theta} u(\theta,r^{-2})| \leq 4 r \, \|y\|_{BV}$ and thus $\|y_r\|_{C^1} = \|y_r\|_{\infty} + \|y_r'\|_{\infty} \leq 5r \|y\|_{BV}$ for $r \geq 1$.

	We continue by estimating $	\|y - u(\cdot, t)\|^2_2$ for $t = r^{-2}$ by using the Fourier series 
	\begin{align*}
		\fint_{0}^{2\pi} &|y(\theta) - u(\theta, t)|^2 \, d\theta  \leq \sum_{k=0}^\infty | a_k \left (1-e^{-k^2 t} \right )|^2 + | b_k \left (1-e^{-k^2 t} \right )|^2\\
		&\leq \sum_{k=1}^{r} |a_k \left (1-e^{-k^2 t} \right )|^2 + | b_k \left (1-e^{-k^2 t} \right )|^2 +\sum_{k=r+1}^\infty (a_k^2+ b_k^2) .
	\end{align*}
	We first use \eqref{variation2} the estimate the second term as 
	\begin{align*}
		\sum_{k=r+1}^\infty (a_k^2+ b_k^2)\leq 8\|y\|_{BV}^2 \sum_{k=r+1}^\infty \frac{1}{k^2} \leq
		\frac{8\|y\|_{BV}^2}{r}.
	\end{align*}
	We use \eqref{variation2} to estimate the first term as 
	\begin{align*}
		 &\sum_{k=1}^{r} |a_k \overbrace{\left (1-e^{-k^2 t} \right )}^{\leq k^2 t}|^2 + | b_k \left (1-e^{-k^2 t} \right )|^2\\
		 &\leq \sum_{k=1}^{r} |t \,   k^2  a_k |^2 + |t \,   k^2  b_k |^2 \\
		 &\leq 8\|y\|_{BV}^2 t^2 \sum_{k=1}^{r} k^2 =  8  \|y\|_{BV}^2 t^2 \frac{r(r+1)(2r +1)}{6} \\
		 &\leq 8  \|y\|_{BV}^2  t^2 r^3 \leq 8 \|y\|_{BV}^2\frac{1}{r} 
	\end{align*}
	for $t = r^{-2}$.
	Hence we deduce $\|y - u(\cdot, r^{-2})\|_2^2 \leq \frac{16 \|y\|_{BV}^2}{r}$.
\end{proof}

 Recall that every $f_W \in \Hma$ belongs to the function space $\Lal$ defined in \cref{L2al}. It is therefore natural to decompose also $y = y_1 + y_2$ where $y_1 \in \Lal$ and $y_2 \in \Lal^\perp$. In the following lemma  we make the rather obvious remark that $y$ bounds  the norm of $y_1$.

\begin{lemma} \label{lem:lal}
	Assume $y \in BV(\S^1)$ and let us write $y = y_1 + y_2$ with $y_1 \in \Lal$ and $y_2 \in \Lal^{\perp}$, where $\Lal$ is defined in \cref{L2al}. Then
    \[
        \|y_1\|_{BV} \leq 4 \|y\|_{BV}.
    \]
\end{lemma}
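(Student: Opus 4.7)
The plan is to construct $y_1$ explicitly by decomposing $y$ into its symmetric/antisymmetric parts and then splitting off the linear piece of the antisymmetric part. First I would write $y = y^s + y^a$ with $y^s(\theta) = \tfrac12(y(\theta)+y(\theta+\pi))$ and $y^a = y - y^s$. Expanding $y^a$ in Fourier series, its odd-index coefficients survive and in particular contain a unique \emph{linear} component $l(\theta) = a_1\sin\theta + b_1\cos\theta$, obtained as the $L^2$-projection of $y^a$ onto $\mathrm{span}\{\sin\theta,\cos\theta\}$. Set $g := y^a - l$, so that $g$ is antisymmetric and orthogonal to the linear functions, hence $g \in \Lal^\perp$. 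Then $y_1 = y^s + l \in \Lal$ and $y_2 = g$, which is the unique decomposition being estimated.

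By the triangle inequality,
\[
    \|y_1\|_{BV} \leq \|y^s\|_{BV} + \|l\|_{BV},
\]
and I would handle the two terms separately. For $y^s$, translation invariance of the BV seminorm gives $V_{y(\cdot+\pi)} = V_y$ and $\|y(\cdot+\pi)\|_\infty = \|y\|_\infty$, so by the very definition
\[
    \|y^s\|_{BV} \leq \tfrac{1}{2}\|y\|_{BV} + \tfrac{1}{2}\|y\|_{BV} = \|y\|_{BV}.
\]

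For $l$, the coefficients are $a_1 = 2\fint y^a \sin\theta\,d\theta$ and $b_1 = 2\fint y^a \cos\theta\,d\theta$, and by Parseval $(a_1^2 + b_1^2)/2 \leq \|y^a\|_2^2 \leq \|y\|_\infty^2 \leq \|y\|_{BV}^2$, so $\sqrt{a_1^2+b_1^2} \leq \sqrt{2}\,\|y\|_{BV}$. This bounds $\|l\|_\infty \leq \sqrt{a_1^2+b_1^2} \leq \sqrt{2}\|y\|_{BV}$. For the variation, $l'(\theta) = a_1\cos\theta - b_1\sin\theta$ and Cauchy--Schwarz (first pointwise, then over $\theta$) yields
\[
    V_l = \fint_0^{2\pi} |l'(\theta)|\,d\theta \leq \Bigl(\fint_0^{2\pi}(l')^2\,d\theta\Bigr)^{1/2} = \sqrt{(a_1^2+b_1^2)/2} \leq \|y\|_{BV}.
\]
Combining, $\|l\|_{BV} \leq (1+\sqrt{2})\|y\|_{BV}$, and finally $\|y_1\|_{BV} \leq (2+\sqrt{2})\|y\|_{BV} \leq 4\|y\|_{BV}$.

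I do not anticipate any real obstacle: the only mildly delicate point is remembering to treat $V_y$ via the Radon-measure definition (so translation and averaging behave linearly) rather than trying to work with the supremum form directly, and to pick the right inequality (Parseval versus the $\|y\|_{BV} \geq k|a_k|/2$ bound from \cref{variation2}) so that the numerical constant comes out below $4$ rather than something larger.
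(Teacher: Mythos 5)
Your proof is correct and takes essentially the same route as the paper: split $y_1 = y^s + l$, bound $\|y^s\|_{BV} \leq \|y\|_{BV}$ by translation invariance of the BV norm, and control $\|l\|_\infty$ via orthogonality to $y^s$ and $y_2$ together with Cauchy--Schwarz. You are in fact slightly more explicit than the paper, which stops after bounding $\|l\|_\infty \leq \sqrt{2}\,\|y\|_2$ and leaves the routine estimate of $V_l$ and the final assembly to the reader.
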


\begin{proof}
	Since $y_1 \in \Lal$ it can be written as $y_1(x) = y^s(s) + l(x)$,  where $y^s$ is the symmetric part of $y$ and $l$ is a linear function. First, we may write $y^s(x) = \frac{y(x) + y(-x)}{2}$ and therefore
	\[
	    \|y^s\|_{BV} \leq \|y\|_{BV}.
	\]
	Since $y^s \perp l$ and $y_2 \perp l$ we have
	\begin{align*}
		\fint_{\S^1} y(x) \frac{l(x)}{\|l\|_{\infty}} dx = \|l\|_{\infty} \fint_{\S^1} \left (\frac{l}{\|l\|_{\infty}} \right )^2 dx = \frac{\|l\|_{\infty}}{2},
	\end{align*}
	where the last equality follows from the fact that $\fint_{(0,2\pi)} \sin^2(\theta) d\theta = 1/2$.
	From the above equality and from Cauchy-Schwarz we obtain
	\begin{align*}
		\|l\|_{\infty} = 2\fint_{\S^1} y(x) \frac{l(x)}{\|l\|_{\infty}} dx \leq 2\frac{\|y\|_{2}}{\sqrt{2}}.
	\end{align*}
\end{proof}

\section{Convergence to equilibrium and properties of the cost function}
\label{sec:thm1}

In this section we assume $\mu$ to be  merely a probability measure on $\S^1$ and denote 
\[
    \E_{\mu}[f] := \int_{\S^1} f(x) \, d\mu(x) \qquad \text{and} \qquad \|f\|_{L^2(\mu)}^2 := \E_{\mu}[f^2].
\]
Our goal  is to   prove \cref{thm1}.  To this aim we need first to study the structure of the  cost function $\Phi$ defined as
\[
    \Phi(W) := \E_{\mu}[(f_W - y)^2] = \|f_W - y\|_{L^2(\mu)}^2,
\]
where we write $f_W  = f_{W,a} \in \Hma$ for short.  We   first study the growth bounds from below and above of the cost function.
\begin{lemma}
\label{growth&coer}
	Assume that $ \|y\|_{L^2(\mu)} \leq 1$. Then,
	\begin{align*}
		\Phi(W) \leq (|W| + 1)^2
	\end{align*}
and
	\begin{align*}
		|\grad_W \Phi(W)|^2 \leq 4 \Phi(W)  \quad \text{and} \quad  \langle \grad_W \Phi(W), W \rangle  \geq \Phi(W) - 1.
	\end{align*}
\end{lemma}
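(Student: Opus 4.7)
The three estimates are really three different manipulations of the formula $\Phi(W)=\|f_W-y\|_{L^2(\mu)}^2$ together with positive $1$-homogeneity of $\sigma$. I will treat them in order, since the gradient computations in the last two rely on the same elementary identity $\sigma'(t)\,t=\sigma(t)$ (which holds for $t\neq 0$ with any measurable choice of $\sigma'(0)$; the bounds therefore hold pointwise a.e.\ and thus for the Sobolev gradient which is all we need for the SDE).

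\textbf{Growth from above.} First I would note that for $x\in\S^1$ and any vector $w\in\R^2$, $|\sigma(w\cdot x)|\leq |w|$. Therefore
\[
|f_W(x)|\leq \frac{1}{\sqrt m}\sum_{i=1}^m |w_i|\leq \frac{1}{\sqrt m}\sqrt m\,|W|=|W|
\]
by Cauchy--Schwarz, so $\|f_W\|_{L^2(\mu)}\leq |W|$. The triangle inequality $\|f_W-y\|_{L^2(\mu)}\leq \|f_W\|_{L^2(\mu)}+\|y\|_{L^2(\mu)}\leq |W|+1$ then yields $\Phi(W)\leq(|W|+1)^2$.

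\textbf{Gradient bound.} For the derivative with respect to $w_i$ I would use $\partial_{w_i}f_W(x)=\tfrac{a_i}{\sqrt m}\sigma'(w_i\cdot x)\,x$, so that
\[
\grad_{w_i}\Phi(W)=\frac{2a_i}{\sqrt m}\int_{\S^1}(f_W-y)(x)\,\sigma'(w_i\cdot x)\,x\,d\mu(x).
\]
Since $|\sigma'(w_i\cdot x)\,x|\leq 1$ on $\S^1$, Cauchy--Schwarz gives $|\grad_{w_i}\Phi(W)|^2\leq \tfrac{4}{m}\Phi(W)$, and summing over $i$ gives $|\grad_W\Phi(W)|^2\leq 4\Phi(W)$.

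\textbf{Coercivity of the gradient.} Here I would exploit the identity $\sigma'(t)\,t=\sigma(t)$ (Euler's relation for the $1$-homogeneous function $\sigma$). It gives $\langle w_i,\partial_{w_i}f_W(x)\rangle=\tfrac{a_i}{\sqrt m}\sigma'(w_i\cdot x)(w_i\cdot x)=\tfrac{a_i}{\sqrt m}\sigma(w_i\cdot x)$, and summing yields $\langle W,\grad_W f_W(x)\rangle=f_W(x)$. Consequently
\[
\langle W,\grad_W\Phi(W)\rangle=2\int(f_W-y)f_W\,d\mu=\int\bigl[(f_W-y)^2+f_W^2-y^2\bigr]\,d\mu,
\]
using the identity $2(f_W-y)f_W=(f_W-y)^2+f_W^2-y^2$. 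Dropping the nonnegative term $\|f_W\|_{L^2(\mu)}^2$ and using $\|y\|_{L^2(\mu)}^2\leq 1$ gives the desired lower bound $\Phi(W)-1$.

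\textbf{Expected obstacles.} There is essentially no deep obstacle; the only point requiring a little care is the non-differentiability of $\sigma$ at $0$. Since $f_W$ is only piecewise smooth in $W$, I would either argue a.e.\ and appeal to a Lipschitz/Sobolev representative of $\grad_W\Phi$ (the set $\{w_i\cdot x=0\}$ has $\mu\otimes dW$-measure zero for generic configurations), or equivalently fix a convention $\sigma'(0)\in[0,1]$ and note that all inequalities above hold for every pointwise choice. This suffices for the SDE \cref{SDE2} in \cref{thm1}.
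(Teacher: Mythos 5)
Your proof is correct and follows essentially the same route as the paper: the bound $|f_W(x)|\le|W|$ via Cauchy--Schwarz, the per-node gradient estimate, and the Euler identity $\langle W,\nabla_W f_W\rangle=f_W$ (which the paper derives via homogeneity, an equivalent observation). Your third step is even marginally tidier than the paper's, since $2(f_W-y)f_W=(f_W-y)^2+f_W^2-y^2$ gives the exact identity $\langle W,\nabla_W\Phi\rangle=\Phi(W)+\|f_W\|_{L^2(\mu)}^2-\|y\|_{L^2(\mu)}^2$ rather than passing through Young's inequality as the paper does.
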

\begin{proof}
	By Cauchy-Schwarz we have for every $f_W \in \Hma$ and every $x \in \S^1$
	\begin{align}
		\notag |f_{W}(x)| &= \frac{1}{\sqrt{m}}| \sum_{i=1}^m a_i \sigma(w_i \cdot x) |\\
		&\leq \frac{1}{\sqrt{m}} \left( \sum_{i=1}^m \underbrace{a_i^2}_{=1}  \right)^{1/2} \left( \notag \sum_{i=1}^m \underbrace{\sigma(w_i \cdot x)^2}_{\leq |w_i|^2} \right)^{1/2}\\
		\label{eq:fbound} &\leq  \frac{1}{\sqrt{m}} \left( m \right)^{1/2}  \left( \sum_{i=1}^m |w_i|^2 \right)^{1/2} = |W|.
	\end{align}
	Using \cref{eq:fbound} and Cauchy-Schwarz again yields
	\begin{equation}
		\label{eq:covbound}
		\big{|}\E_\mu[f_W y]\big{|} \leq \|f_W\|_{L^2(\mu)} \|y\|_{L^2(\mu)}
		\leq \|f_W\|_{L^2(\mu)}	\leq |W|.
	\end{equation}
	By  \cref{eq:fbound,eq:covbound} we may now bound the value of the cost function 
	\begin{multline}
		\label{eq:lossbound}
		\E_\mu[(f_W-y)^2] = \E_\mu[f_W^2]-2 \E_\mu[f_W y] + \E_\mu[y^2] \\
		\leq |W|^2 + 2 |W| + 1
	\end{multline}
	which yields the first claim.

	By differentiating $f_{W,a}(x)$ with respect to the weight $w_i$ yields 
	$$
	    \frac{\pa}{\pa w_i} f_{W,a}(x) = \frac{1}{\sqrt{m}}a_i \sigma'(w_i \cdot x) x.
	$$ 
	Therefore
	\begin{align}
		\label{eq:gradfbound}
		\|\grad_W f_W\|_{L^2(\mu)}^2 = \E_\mu \left [\frac{1}{m} \sum_{i=1}^m a_i^2 |x|^2 \sigma'(w_i \cdot x)^2 \right ] \leq 1.
	\end{align}
	Hence, we deduce from \cref{eq:lossbound,eq:gradfbound} that
	\begin{align*}
		|\grad_W \Phi(W)|  &= |\grad_W \E_\mu[(f_W-y)^2]| \leq 2 \E_\mu [|\grad_W f_W (f_W-y)|] \\
		&\leq 2 \|\grad_W f_W\|_{L^2(\mu)}\|f_W-y\|_{L^2(\mu)}\\
		&\leq 2 \sqrt{\Phi(W)},
	\end{align*}
	which yields the second claim.

	In order to prove the third claim we observe that by homogeneity, $f_{\lambda W}(x) = \lambda f_W(x)$ for all $\lambda >0$. Therefore we have
	\begin{align*}
		\langle \nabla f_W(x), W \rangle = \frac{\pa}{\pa \lambda} \Big{|}_{\lambda = 1} f_{\lambda W}(x) = f_W(x).
	\end{align*}
	Thus we deduce
	\begin{align*}
	\langle \grad_W \Phi(W), W \rangle  &= \langle \nabla \E_\mu [(f_W-y)^2], W \rangle \\
	&= 2\E_\mu [\langle \nabla f_W , W \rangle (f_W-y)] \\
	&= 2\E_\mu [(f_W-y)^2] + 2\E_\mu [y(x)(f_W-y)] \\
	&\geq \E_\mu [(f_W-y)^2] -\E_\mu [y^2] \\
	&\geq \Phi(W) - 1 .
	\end{align*} 
\end{proof}

One might be tempted to think that the third inequality in  \cref{growth&coer}  implies   the cost function to be  coercive.  This is however not true, as the next lemma shows (see also \cref{badexamp}). In particular,  we show that the gradient flow diverges for certain starting points. 

\begin{lemma}
\label{lem:badexamp2}
	Assume that $y(x) =  \I\{x_2 \geq 0\} x_1$, $a_1 = -a_2 =1$ and  $\mathcal{H}_{2,a}$ and $\Phi$ are as in \cref{badexamp}. Choose  $W_0 = (w_1, w_2)$ with $w_1 =(1/2,b)$ and $w_1 =(-1/2,b)$ for $b \geq 1$. Then the gradient flow $(W_t)_{t \geq 0}$ of $\Phi$ starting from $W_0$ diverges, i.e. 
	\[
	    \lim_{t \to \infty} |W_t| = \infty.
	\]
\end{lemma}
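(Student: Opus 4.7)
The plan is to exploit the reflection symmetry of $\Phi$ that fixes $W_0$ in order to reduce the flow to a two-dimensional gradient flow, compute its gradient in closed form on the symmetric submanifold, and then read off monotonicity and divergence.

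Let $R := \operatorname{diag}(-1,1)$ and define $T : \R^4 \to \R^4$ by $T(w_1,w_2) := (R w_2, R w_1)$. A direct change of variables using $f_{T(W)}(x) = -f_W(Rx)$, $y(Rx) = -y(x)$, and the $R$-invariance of the uniform measure shows $\Phi \circ T = \Phi$. Since $T$ is orthogonal, $\nabla\Phi \circ T = T \circ \nabla\Phi$, so the gradient flow preserves the fixed set
\[
    M := \{((a,b),(-a,b)) : a,b \in \R\},
\]
and $T(W_0) = W_0$ gives $W_t \in M$ for all $t \geq 0$. Parametrize $M$ by $(a,b)$ via $W(a,b) := ((a,b),(-a,b))$ and set $h(a,b) := \Phi(W(a,b))$. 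Since $|\partial_a W|^2 = |\partial_b W|^2 = 2$ and $\partial_a W \perp \partial_b W$, the restricted flow reads
\[
    (\dot a, \dot b) = -\tfrac{1}{2}\nabla h(a,b), \qquad (a(0), b(0)) = (1/2, b_0), \; b_0 \geq 1.
\]
Since $|W_t|^2 = 2(a(t)^2 + b(t)^2) \geq 2\, b(t)^2$, it suffices to prove $b(t) \to \infty$.

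The main technical step is the closed-form evaluation of $\nabla h$ on $\{a, b > 0\}$. Set $\beta := \arctan(a/b) \in (0, \pi/2)$ and $\gamma := \sin\beta \cos\beta = ab/(a^2+b^2)$. In the angular coordinate $\theta$ on $\S^1$, both $w_i \cdot x \geq 0$ on the middle arc $(\beta, \pi - \beta)$ (where $f_W = \sqrt{2}\, a \cos\theta$), exactly one is nonnegative on the two narrow ``asymmetric'' arcs $(-\beta, \beta)$ and $(\pi - \beta, \pi + \beta)$, and both are negative on the remainder (where $f_W \equiv 0$). Splitting the two integrals $\partial_a h = \frac{1}{\pi}\int(f_W - y)\partial_a f_W\, d\theta$ and $\partial_b h = \frac{1}{\pi}\int(f_W - y)\partial_b f_W\, d\theta$ according to this partition, using $y(\theta) = \cos\theta\, \I\{\theta \in (0,\pi)\}$ and $\sin\beta = a/\sqrt{a^2+b^2}$, $\cos\beta = b/\sqrt{a^2+b^2}$, and carefully pairing the middle-arc contribution (where $f_W - y = (\sqrt{2} a - 1)\cos\theta$) with those from the two asymmetric arcs, a substantial amount of telescoping yields
\begin{align*}
    \partial_a h(a,b) &= \frac{(a - 1/\sqrt{2})(\pi - \beta - \gamma)}{\pi}, \\
    \partial_b h(a,b) &= \frac{1}{\pi}\left[-\bigl(a - b \arctan\tfrac{a}{b}\bigr) - \frac{a^2(1/\sqrt{2} - a)}{a^2 + b^2}\right].
\end{align*}

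From these formulas everything else is quick. Since $\pi - \beta - \gamma > 0$, the line $\{a = 1/\sqrt{2}\}$ is invariant for the flow ($\partial_a h \equiv 0$ there) and $\partial_a h < 0$ for $a < 1/\sqrt{2}$; as $\nabla h$ is smooth on $\{b > 0\}$, Picard--Lindel\"of uniqueness forces $a(t) \in [1/2, 1/\sqrt{2})$ for all $t \geq 0$ with $a$ strictly increasing. For $a \in (0, 1/\sqrt{2}]$ and $b > 0$, both bracketed terms in $\partial_b h$ are nonpositive---the first strictly so because $\arctan s < s$ for $s > 0$---hence $\partial_b h(a(t), b(t)) < 0$ and $b(t)$ is strictly increasing. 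Suppose for contradiction $b(t) \to b_\infty < \infty$. Then $(a(t), b(t))$ lies in the compact set $[1/2, 1/\sqrt{2}] \times [b_0, b_\infty]$, and from $\int_0^\infty |\nabla h(W_t)|^2 dt = 2\bigl(h(W_0) - \lim_{t \to \infty} h(W_t)\bigr) < \infty$ together with continuity of $\nabla h$, some subsequence $(a(t_n), b(t_n)) \to (a_*, b_*)$ with $\nabla h(a_*, b_*) = 0$; but $\partial_a h(a_*,b_*) = 0$ forces $a_* = 1/\sqrt{2}$ and then $\partial_b h(1/\sqrt{2}, b_*) < 0$ for every finite $b_* > 0$, a contradiction. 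Hence $b(t) \to \infty$, and therefore $|W_t| \to \infty$.
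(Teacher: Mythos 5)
Your proof is correct, but it takes a genuinely different (and more careful) route than the paper's. The paper argues that the flow stays exactly on the curve $W_t = ((1/2,b_t),(-1/2,b_t))$, i.e.\ that the first coordinate is frozen at $\pm 1/2$, and then deduces divergence from $|\nabla\Phi(W_t)|>0$. You instead use only the reflection symmetry $T(w_1,w_2)=(Rw_2,Rw_1)$ to confine the flow to the two-parameter invariant surface $M$, compute $\nabla h$ in closed form there, show $a(t)$ strictly increases toward $1/\sqrt{2}$ while $b(t)$ strictly increases, and close with a LaSalle/compactness argument. The two accounts actually disagree: your (verified) formula $\partial_a h=(a-1/\sqrt{2})(\pi-\beta-\gamma)/\pi$ is strictly negative at $a=1/2$, so $a$ drifts upward rather than staying constant. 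The paper's assertion that $\partial_{w_1}\Phi\cdot e_1=0$ on $\{a=1/2\}$ implicitly drops the $1/\sqrt{m}$ normalization in $f_{W,a}$: on the intersection arc $S_1\cap S_2$, where both ReLUs are active, one has $f_W=\sqrt{2}\,a\,x_1$ while $y=x_1$, so $f_W-y=(\sqrt{2}a-1)x_1$ does not vanish there unless $a=1/\sqrt{2}$, and restricting the gradient integral to $S_{12}$ alone is therefore unjustified at $a=1/2$. Your reduction sidesteps this: it works for any initial $a_0\in(0,1/\sqrt{2}]$, not just $a_0=1/2$, and it also supplies the compactness reasoning that the paper's final step (inferring $|b_t|\to\infty$ merely from $|\nabla\Phi(W_t)|>0$) leaves implicit. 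One small caveat: ``$\nabla h$ is smooth'' is slightly more than you need or have argued; locally Lipschitz on $\{a>0,\,b>0\}$ --- which your explicit formulas do give --- is what Picard--Lindel\"of uniqueness requires, and $\Phi$ itself is only $C^{1,1}$ in general.
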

The claim follows from a rather simple calculation which we provide for the convenience of the reader. 
\begin{proof}
    Let us show that the gradient flow $(W_t)_{t \geq 0}$,
    \begin{equation*} 
        \frac{d}{dt} W_t = - \nabla_W \Phi(W_t),
    \end{equation*}
    is always of the form $W_t = (w_1(t), w_2(t)) \in \R^4$ with 
    \begin{equation} \label{badexmp2_2}
        w_1(t) = (1/2, b_t) \qquad \text{and} \qquad w_2(t) = (-1/2, b_t),
    \end{equation}
    with $b_t \geq 1$. Recall that we assume  $\Phi(W) = \|f_w - y\|_2^2$.

    Let us assume that $W_t = (w_1, w_2)$ is  of the form \cref{badexmp2_2} and let us calculate $\nabla_W \Phi(W_t)$. We denote $S_{12} =  \{x \in \S^1: w_1 \cdot x \geq 0 \} \setminus  \{x \in \S^1: w_2 \cdot x \geq 0 \}$ and $S_{21} =  \{x \in \S^1: w_2 \cdot x \geq 0 \} \setminus  \{x \in \S^1: w_1 \cdot x \geq 0 \}$  and have by a straightforward calculation
    \[
        \partial_{w_1} \Phi(W) \cdot e_1= \frac{2}{2 \pi } \int_{S_{12}} \left(  \frac12 x_1 + b_t x_2 - \I_{\{x_2 \geq 0\}} x_1\right)x_1 \, dx = 0
    \]
    and similarly $\partial_{w_2} \Phi(W) \cdot e_1= 0$. Moreover, 
    \[
        \begin{split}
            \partial_{w_1} \Phi(W) \cdot e_2&= \frac{2}{2 \pi } \int_{S_{12}} \left(  \frac12 x_1 + b_t x_2 - \I_{\{x_2 \geq 0\}} x_1\right)x_2 \, dx \\
            &= \frac{2}{2 \pi } \int_{S_{21}} \left(  \frac{1}{2} x_1 + b_t x_2 - \I_{\{x_2 \geq 0\}} x_1\right)x_2 \, dx \\
            &= \partial_{w_2} \Phi(W) \cdot e_2.
        \end{split}
  \]
  This means that if $W_t$ is of the form $W_t = (1/2, b_t, -1/2, b_t) \in \R^4$ then the gradient is of the form $\nabla_W \Phi(W_t) = (0,c_t,0,c_t)$. This proves \cref{badexmp2_2}. The argument also yields that $|\nabla_W \Phi(W_t)|^2 = 2c_t^2 >0$ when  $|b_t| < \infty$. Therefore we deduce that necessarily $|b_t| \to \infty$ and the claim follows. 
\end{proof}

The fact that the gradient flow may diverge to infinity as shown by  \cref{badexamp} and \cref{lem:badexamp2}  forces us to consider a penalizing term to confine the gradient descent. We recall that we define the penalization of $\Phi$ as
\begin{equation}
    \label{pena again}
    \Phi_R(W) = \max\{ \Phi(W), 4(|W|^2 -R^2) \}.
\end{equation}
The reason we choose this penalization is that it is Gaussian outside a large ball and we wish to utilize the contracting effect of high dimension, where the local non-convexity of $\Phi$ matters less and less. As such, when the dimension is large w.r.t. $R$ we may expect to have Poincar\'e inequality with a constant  independent of the dimension.

\cref{thm1} concerns the density of the stochastic gradient flow \eqref{SDE2}. It is well known  that the density $\rho$ solves the following Fokker-Planck equation 
\begin{equation*}
	\frac{\partial \rho}{\partial t} = \nabla \cdot \left ( \rho \nabla \Phi_R + \eps^2 \nabla \rho \right ).
\end{equation*}
By setting $u(t,x) = \rho(t,x)e^{\Phi_R(x)/\eps^2}$  we obtain a formulation
\begin{equation}
    \label{FP3}
    \frac{\partial u}{\partial t} = \eps^2 \Delta u - \nabla \Phi_R \cdot \nabla u
    = \nabla\cdot \left (\eps^2 \nabla u e^{-\Phi_R/\eps^2} \right )
\end{equation}
In order to quantify the convergence of $u$, and thus in turn of $\rho$, we need to prove the Poincar\'e inequality (PI) \cref{poincare} for the measure  $e^{-\Phi_R/\eps^2} dW$. This is the most important result of the section, and deserves some comments. 

The history of Poincar\'e inequalities for exponential measures is  rich and deep. In the Gaussian case, i.e. $e^{-|x|^2} dx$, the PI is by \cref{rem:LSI-poincare}  a consequence of the log-Sobolev inequality (LSI), which is also  called the Gross logarithmic Sobolev inequality, see \cite{Gross}. As mentioned before, the crucial  fact is that  the Gaussian Poincar\'e constant is independent of the dimension.

Since the cost function $\Phi$ defined  \cref{def:costfunc} and $\Phi_R$ defined in \cref{pena again} are non-convex, we  cannot use the well developed theory of  PI for log-convex measures \cite{VBook}. In order to obtain the PI for measures that are not log-convex, the most well known methods are either  by  perturbation argument or via Lyapunov functions. In the first case, the idea is to perturb the PI related to a log-concave measure, for which the PI or even LSI holds.  In this case  the Holley-Stroock perturbation result, \cref{lem:hostro}, is essentially sharp for local perturbations, if  we know only the oscillation. For global perturbations, the works  \cite{Aida,Aida2} are for  our knowledge the best result which gives dimensional-free and explicit constants. The other method, using Lyapunov functions, does not seem to be stable w.r.t. the dimension even for log-concave measures.
It is worth mentioning that if we know more about the geometry of our local perturbation, we could use the tunneling estimates in \cite{Sjostrand} to obtain sharper constants.

As for  now we do not know enough about the geometry of local minimas of the cost function to apply other methods than perturbation. As such, our proof consists of a combination of the Holley-Stroock perturbation method together with the one by Aida. Finally, it is essential to remark that we  trace the constants explicitly.

\begin{proposition}
	\label{poincare thm1}
	Assume that $0 < \eps \leq 1$ and $R \geq 10$. Then,
	\begin{align*}
		\int_{\R^{2m}} u^2 \, e^{-\Phi_R(W)/\eps^2}\, d W \leq C_P\, \eps^2 \int_{\R^{2m}} | \nabla u|^2 \, e^{-\Phi_R(W) /\eps^2}\, d W
	\end{align*}
	for every Lipschitz function with $\int_{\R^{2m}} u \, e^{-\Phi_R(W) }\, d W = 0$.  When $m \geq 24 R^2\eps^{-2} $ the constant is bounded by
	\begin{align*}
		C_P \leq 140.
	\end{align*}
	Otherwise we have the bound $C_P \leq \frac{1}{8} e^{(4R^2  +2)\eps^{-2}}$.
\end{proposition}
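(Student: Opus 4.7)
The plan is to view $\Phi_R$ as a perturbation of the quadratic potential $4|W|^2$ (whose Gaussian satisfies dimension-free LSI by \cref{lem:logsob}) and apply Holley--Stroock (\cref{lem:hostro}) for the general case and Aida (\cref{lem:aida}) for the high-dimensional case. Write $\Phi_R(W)=4|W|^2+V(W)$ with $V(W)=\max\{\Phi(W)-4|W|^2,\,-4R^2\}$. Using the bounds $\Phi(W)\le(|W|+1)^2$ and $|\nabla\Phi|\le 2\sqrt{\Phi}$ from \cref{growth&coer}, I would check three structural facts: $-4R^2\le V\le 4/3$; $V\equiv-4R^2$ on the ``far-field'' set, which contains $\{|W|\ge c_0R\}$ for $c_0\approx 1.17$ (from solving $(|W|+1)^2=4|W|^2-4R^2$); and on the complementary near-field $|\nabla V|\le|\nabla\Phi|+8|W|\le 10|W|+2=O(R)$. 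Thus the target measure equals $e^{-V/\eps^2}\,d\mu_\Lambda$ up to normalization, where $d\mu_\Lambda\propto e^{-4|W|^2/\eps^2}\,dW$ satisfies LSI with $C_{LS}=\eps^2/8$.

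\medskip

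\textbf{General bound.} Since $\osc(V/\eps^2)\le(4R^2+4/3)/\eps^2$, \cref{lem:hostro} combined with \cref{rem:LSI-poincare} yields PI with constant $(\eps^2/8)\,e^{(4R^2+4/3)/\eps^2}$; dividing out $\eps^2$ gives $C_P\le\tfrac18 e^{(4R^2+2)/\eps^2}$, as claimed.

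\medskip

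\textbf{Dimension-free bound.} Apply \cref{lem:aida} with $\beta=1/4$, so that $64(1+\beta)(1-\beta/2)/\beta^2=1120$, which with $C_{LS}=\eps^2/8$ produces exactly $C_P\le 140$. Set $G(W):=-V(W)/(2\eps^2)-\tfrac12\log Z$ and fix $Z$ so that \cref{eq:lemaida3} holds automatically. The remaining two conditions both reduce to quantitative smallness of the near-field mass under $\mu_\Lambda$: since $|W|^2/(\eps^2/8)\sim\chi^2_{2m}$ has mean $2m$, while the near-field lies in $\{\chi^2_{2m}\lesssim 11R^2/\eps^2\}$, the hypothesis $m\ge 24R^2/\eps^2$ places this threshold well below $\tfrac14\cdot 2m$, and the Cram\'er bound $\P(\chi^2_n\le\alpha n)\le(\alpha e^{1-\alpha})^{n/2}$ delivers $\mu_\Lambda(\text{near-field})\le e^{-cm}$ with $c$ large enough to swallow the $e^{O(R^2/\eps^2)}$ loss from $|\nabla G|^2=O(R^2/\eps^4)$ appearing in \cref{eq:lemaida1}. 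Condition \cref{eq:lemaida2} is handled by the same estimate, since on the far-field $e^G$ is constant, so the Cauchy--Schwarz defect $1-\int e^G\,d\mu_\Lambda$ is again controlled by $\mu_\Lambda(\text{near-field})$.

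\medskip

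\textbf{Main obstacle.} The delicate point is simultaneously matching all three Aida conditions at $\beta=1/4$ under the precise threshold $m\ge 24R^2/\eps^2$: \cref{eq:lemaida1} demands near-field Gaussian mass $\le e^{-\Omega(R^2/\eps^2)}$ to dominate a gradient factor of size $e^{\Theta(R^2/\eps^2)}$, and once the shift in $G$ is fixed by \cref{eq:lemaida3}, condition \cref{eq:lemaida2} forces $\int e^G\,d\mu_\Lambda$ within $1/8$ of the Cauchy--Schwarz ceiling $1$. Both require a sharp, rather than crude, $\chi^2_{2m}$-tail estimate, which is where the exact constants $24$ and $140$ enter.
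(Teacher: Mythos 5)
Your decomposition and strategy match the paper's exactly up to a trivial reparametrization: you write $\Phi_R=4|W|^2+V$ with $V=\max\{\Phi-4|W|^2,-4R^2\}$, which is $-4R^2-2F$ in the paper's notation, and the reference Gaussian $e^{-4|W|^2/\eps^2}$, the Holley--Stroock route for the general bound, and Aida's lemma with $\beta=1/4$ (giving $64\cdot\tfrac{5}{4}\cdot\tfrac{7}{8}\cdot16\cdot\tfrac{1}{8}=140$) for the dimension-free bound are identical to the paper. Your oscillation bound $\osc(V)\le 4R^2+4/3\le 4R^2+2$, the support argument $V\equiv-4R^2$ outside $\{|W|\lesssim\sqrt{2}R\}$ (the paper uses $\sqrt{2}R$ exactly, requiring $R\ge10$), and the normalization device to force \cref{eq:lemaida3} are all in the paper.

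The gap, which you flag yourself under ``Main obstacle,'' is that you never actually verify \cref{eq:lemaida1} and \cref{eq:lemaida2}; you assert that a Cram\'er--Chernoff bound on $\chi^2_{2m}$ ``delivers'' the required smallness, but the quantitative matching of the near-field mass against the gradient blow-up $e^{\Theta(R^2/\eps^2)}$ at the precise threshold $m\ge 24R^2\eps^{-2}$ is the entire technical content of the proposition. The paper does this with an explicit ball-volume estimate $|B_{\sqrt{2}R}|\le\tfrac{1}{\sqrt{2\pi m}}\bigl(\tfrac{2\pi eR^2}{m}\bigr)^m$ via Stirling, feeding it into $\int_{B_{\sqrt{2}R}}e^{\eps^{-2}|W|^2}\,d\gamma$ for \cref{eq:lemaida1} and into a lower bound for $\int e^{2\eps^{-2}F}\,d\gamma$ (plus the $F\le 0$ observation $\int e^{G_\eps}\,d\gamma\ge(\int e^{2G_\eps}\,d\gamma)^{1/2}e^{-\alpha}$) for \cref{eq:lemaida2}. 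Two further details matter and are absent from your sketch: (i) the paper sharpens the gradient bound to $|\nabla F|^2\le 16|W|^2+4$ by using the full angle condition $\langle\nabla\Phi,W\rangle\ge\Phi-1$ from \cref{growth&coer}, not merely $|\nabla\Phi|\le 2\sqrt{\Phi}$ (your cruder $(5|W|+1)^2$ loses roughly a factor of $3/2$ in the exponent, which feeds into the threshold constant); and (ii) \cref{eq:lemaida2} is not a Cauchy--Schwarz defect but follows from the pointwise monotonicity $e^{\eps^{-2}F}\ge e^{2\eps^{-2}F}$ together with the normalization. Without carrying out these estimates you cannot land on the specific numbers $24$ and $1/8$, which is exactly what the paper's computation with $R\ge 10$ and $m\ge 24R^2\eps^{-2}$ is calibrated to achieve.
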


\begin{proof}
	We will use the perturbation results  \cref{lem:hostro} and  \cref{lem:aida} to prove the Poincar\'e inequality for the measure $e^{-\Phi_R(W)/\eps^2} dW$, where $\Phi_R(W) $ is defined in \cref{pena again}. We may write $\Phi_R(W) = V(W) - 2F(W)$, where  $V(W) = 4(|W|^2 -R^2)$ and
	\begin{align*}
		F(W) = \frac12(V(W) - \Phi(W))\I_{\{\Phi(W) > V(W)\} }.
	\end{align*}
	Since 
	\begin{align*}
	    e^{-V(W)/\eps^2} = e^{4R^2\eps^{-2}} e^{-4 \eps^{-2}|W|^2},    
	\end{align*}
	the reference measure is the scaled Gaussian measure $d \gamma = c_{2m}e^{-4 \eps^{-2}|W|^2}  dW$, where the normalization factor is
	\begin{align*}
		c_{2m} = \left(\frac{4}{\eps^2 \pi}\right)^{m},
	\end{align*}
	and the perturbed measure is $d \gamma_F = e^{2\eps^{-2}F} d\gamma$. By \cref{lem:logsob} $\gamma$ satisfies the log-Sobolev inequality   with constant $C_{LS} = \frac{\eps^2}{8}$.

	Let us first show that
	\begin{equation}
		\label{poincare st1}
		F(W) = 0 \qquad \text{for all} \quad  |W|^2 \geq 2 R^2.
	\end{equation}
	Indeed, by  \cref{growth&coer} we have $\Phi(W) \leq (|W| + 1)^2$. When $R \geq 10$ and $|W|^2 \geq 2 R^2$ then
	\begin{align*}
		V(W) = 4(|W|^2 -R^2) \geq (|W| + 1)^2.
	\end{align*}
	Therefore $V(W) \geq \Phi(W)$ for all  $|W|^2 \geq 2 R^2$ and thus  we have \cref{poincare st1}.

	By the definition of $F$ we have $F \leq 0$.  From \cref{growth&coer}  we deduce $4|W|^2 - \Phi(W) \geq 2|W|^2 - 2 \geq -2$. Therefore we have
	\begin{equation*}
		2|F(W)| =  (V(W) - \Phi(W))_- = (4|W|^2 - \Phi(W) - 4R^2)_-  \leq 4R^2 + 2.
	\end{equation*}
	By \cref{lem:hostro} we deduce that the Poincar\'e inequality holds for $d\gamma_F$ with a constant $C_{P,\eps} $, which we may estimate as
	\begin{equation}
		\label{poincare st2}
		C_{P,\eps} \leq \frac{\eps^2}{8}e^{(4R^2 + 2)\eps^{-2}},
	\end{equation}
	setting $C_P = \eps^{-2} C_{P,\eps}$ yields the first result.

	Let us improve the bound \cref{poincare st2} in the case  $m \geq 24 R^2\eps^{-2}$, by using Aidas result, \cref{lem:aida}. We choose $\beta = \frac{1}{4}$ in \cref{lem:aida}. Let us verify the conditions \cref{eq:lemaida1,eq:lemaida2,eq:lemaida3}. We first claim that
	\begin{equation}
		\label{poincare st3}
			\int_{\R^{2m}} e^{2C_{LS}(1 +\frac{1}{4}) | \nabla (\eps^{-2}F) |^2 }  \,d \gamma \leq 1 + \frac{1}{8},
	\end{equation}
	which gives \cref{eq:lemaida1}.
	To this aim we observe that by \cref{poincare st1}, $F= 0$ outside the ball $B_{\sqrt{2}R}$. Moreover by differentiating we get $2\nabla F(W) = \nabla V(W) - \nabla \Phi(W) = 8 W - \nabla \Phi(W) $, when $F(W) \neq 0$. Therefore we estimate by \cref{growth&coer}
	\begin{align*}
		|\nabla F(W)|^2 &= \frac{1}{4} \left(64 |W|^2 - 16  \langle \nabla \Phi(W), W \rangle + | \nabla \Phi(W)|^2\right) \\
		&\leq  \frac{1}{4} \left(64 |W|^2 - 16 \Phi(W) + 16 + 4 \Phi(W) \right) \\
		&\leq   16|W|^2 + 4.
	\end{align*}
	 Therefore we have
	\begin{align}
		\notag \int_{\R^{2m}} e^{2C_{LS}(1 +\frac14) | \nabla (\eps^{-2}F) |^2 }  \,d \gamma &= \int_{\R^{2m}} e^{\frac{\eps^2}{4}(1 +\frac14) |\eps^{-2}\nabla F|^2 }  \,d \gamma =   \int_{\R^{2m}} e^{\frac{5}{16} \eps^{-2} |\nabla F|^2 }  \,d \gamma  \\
		\notag &\leq  1 + \left(\frac{4}{\eps^2 \pi}\right)^{m}  \int_{B_{\sqrt{2} R}} e^{5 \eps^{-2}|W|^2 + \frac54 \eps^{-2} } e^{-4 \eps^{-2} |W|^2} \, d W\\
		\notag &\leq 1 + e^{\frac54 \eps^{-2}} \left(\frac{4}{\eps^2\pi}\right)^m  \int_{B_{\sqrt{2} R}} e^{\eps^{-2}|W|^2}  \, d W\\
		\label{eq:aida1} &\leq 1 +  e^{\frac54 \eps^{-2}}  \left(\frac{4}{\eps^2 \pi}\right)^m e^{2R^2 \eps^{-2}} |B_{\sqrt{2} R}|.
	\end{align}
	We continue  by recalling the volume of the $2m$-dimensional ball of radius $\sqrt{2} R$ and using Stirlings formula
	\begin{equation}
		\label{poincare ball}
		|B_{\sqrt{2} R}| = \frac{\pi^m}{m!} (\sqrt{2}R)^{2m} \leq \frac{1}{\sqrt{2\pi m}} \left( \frac{2\pi e R^2}{m}  \right)^m.
	\end{equation}
	Therefore, using \cref{eq:aida1}, \cref{poincare ball} and the assumptions $m \geq 24 R^2 \eps^{-2} $, $R \geq 10$, we deduce
	\begin{align*}
			\int_{\R^{2m}} e^{2C_{LS}(1 +\frac14) | \nabla (\eps^{-2}F) |^2 }  \,d \gamma &\leq 1 +  \frac{ e^{\frac54  \eps^{-2}}  }{\sqrt{2\pi m}} e^{2R^2 \eps^{-2}}  \left(  \frac{8 e R^2}{\eps^2 m}  \right)^m \\
		&\leq 1 +  \frac{1}{100}\left(  \frac{e^{\frac{1}{11}} e}{3}  \right)^m\\
		&\leq 1 + \frac{1}{8},
	\end{align*}
	since $\frac{e^{\frac{1}{11}} e}{3} < 1$. Hence, we have \cref{poincare st3}.

	Let us define $G_\eps := \eps^{-2}F + \alpha$, where $\alpha \in \R$ is chosen such that $\int_{\R^{2m}} e^{2G_\eps}\, d \gamma = 1$, in order to satisfy \cref{eq:lemaida3}. Note that   $e^{2G_\eps}  \,d \gamma$ is merely $c_n e^{2\eps^{-2} F }  \,d \gamma$, where $c_n$ is the normalization factor.   We claim that 
	\begin{equation}
		\label{poincare st4}
		\int_{\R^{2m}} e^{G_\eps}\, d \gamma \geq 1 - \frac18,
	\end{equation}
	which establishes \cref{eq:lemaida2}. 
	From $F \leq 0$ and from the choice of $\alpha$ it follows
	\begin{align*}
		\int_{\R^{2m}} e^{G_\eps}\, d \gamma  = e^\alpha \int_{\R^{2m}} e^{\eps^{-2}F}\, d \gamma \geq e^{-\alpha} \int_{\R^{2m}} e^{2G_\eps}\, d \gamma =  e^{-\alpha}.
	\end{align*}
	Now since
	\begin{align*}
		e^{-\alpha} = \left( \int_{\R^{2m}} e^{2\eps^{-2}F}\, d \gamma\right)^{1/2}.
	\end{align*}
	the claim \cref{poincare st4} follows once we prove
	\begin{equation}
		\label{poincare st5}
		\int_{\R^{2m}} e^{2\eps^{-2}F}\, d \gamma \geq 1 - \frac{1}{10},
	\end{equation}
	because $\sqrt{1 - 1/10} \geq 1 - 1/8$.
	To this aim we proceed as in \cref{eq:aida1} by using $F \leq 0$, $F = 0$ outside $B_{\sqrt{2}R}$ and \cref{poincare ball}
	\begin{align*}
		\int_{\R^{2m}} e^{2\eps^{-2}F }  \,d \gamma &= 1 - \int_{B_{\sqrt{2} R}} (1-e^{2\eps^{-2}F } )  \,d \gamma \\
		&\geq 1-   \left(\frac{4}{\eps^{2}\pi}\right)^m  \int_{B_{\sqrt{2} R}}e^{-4\eps^{-2}|W|^2} \, d W\\
		&\geq 1-\left(\frac{4}{\eps^{2}\pi}\right)^m|B_{\sqrt{2} R}|\\
		&\geq 1 - \frac{1}{\sqrt{2\pi m}}  \left( \frac{8 e R^2}{\eps^2m} \right)^m.
	\end{align*}
	By the assumptions $m \geq 24 R^2 \eps^{-2}$, $\eps \leq 1$ and $R \geq 10$ we have
	\begin{align*}
		\frac{1}{\sqrt{2\pi m}}  \left( \frac{8 e R^2}{\eps^2m}  \right)^m \leq \frac{1}{10}.
	\end{align*}
	Hence, \cref{poincare st5} follows from the two above inequalities and we have proved \cref{poincare st4}.

	Now using \cref{poincare st4,poincare st5,poincare st3} we see that \cref{eq:lemaida1,eq:lemaida2,eq:lemaida3} are satisfied and \cref{lem:aida} implies that the measure $ e^{2G_\eps }  \,d \gamma $, and thus also $ d \gamma_F = e^{2\eps^{-2}F} d\gamma$,   satisfies  the Poincar\'e inequality \cref{poincare} with constant (recall $\beta = \frac14$ and  $C_{LS} = \frac{\eps^2}{8}$)
	\begin{align*}
		C_{P,\eps}  \leq \frac{64 \cdot \frac{\eps^2}{8}(1+ \frac14)(1 - \frac18)}{ (\frac14)^2} = 140 \,  \eps^2.
	\end{align*}
	Setting $C_P = \eps^{-2} C_{P,\eps}$, the above inequality yields the second result.
\end{proof}

\cref{thm1}  follows from \cref{poincare thm1} by a standard argument which  we sketch for the convenience of the reader.
\begin{proof}[\textbf{Proof of \cref{thm1}}]
    Let us write $ \rho(W,t)=  u(W,t)e^{-\Phi_R(W)/\eps^2}$ in which case  for $\rho_\infty(W)=  e^{-\Phi_R(W)/\eps^2}$ we have
    \[
        \int_{\R^{2m}} (\rho(W,t)-\rho_\infty)^2 e^{\Phi_R/\eps^2}\, dW = \int_{\R^{2m}} (u(W,t)-1)^2 e^{-\Phi_R/\eps^2}\, dW.
    \]
    Recall that $u$ solves the equation \cref{FP3} and hence
    \begin{align} \label{eq:ddt}
        \notag \frac{d}{dt} \frac12 \int_{\R^{2m}} (u-1)^2 e^{-\Phi_R/\eps^2}\, dW &=  \int_{\R^{2m}} (u-1)\frac{\partial  u}{\partial t}  e^{-\Phi_R/\eps^2}\, dW\\
        \notag &= \int_{\R^{2m}}  (u-1)(\eps^2 \Delta u - \nabla \Phi_R \cdot \nabla u)  e^{-\Phi_R/\eps^2}\, dW\\
        \notag &= \eps^2 \int_{\R^{2m}}  (u-1) \, \nabla \cdot (\nabla u \,  e^{-\Phi_R/\eps^2})\, dW\\
        &= - \eps^2 \int_{\R^{2m}} | \nabla u|^2  e^{-\Phi_R/\eps^2} \, dW,
    \end{align}
    where the last step follows from the self-adjointness of \cref{FP3} in $L^2(e^{-\Phi_R/\eps^2} dW )$, see for instance \cite{Gross,Royer}.
    By \cref{poincare thm1} we have
    \begin{align} \label{eq:poincare}
        \int_{\R^{2m}} (u-1)^2 e^{-\Phi_R/\eps^2}\, dW \leq C_P \eps^2 \int_{\R^{2m}} | \nabla u|^2  e^{-\Phi_R/\eps^2} \, dW,
    \end{align}
    where the constant $C_P$ is explicitly given in \cref{poincare thm1}. Combining \cref{eq:ddt,eq:poincare} we obtain
    \[
        \frac{d}{dt}  \int_{\R^{2m}} (u-1)^2 e^{-\Phi_R/\eps^2}\, dW \leq - \frac{2}{C_P} \int_{\R^{2m}} (u-1)^2 e^{-\Phi_R/\eps^2}\, dW,
    \]
    which in turn implies
    \[
        \int_{\R^{2m}} (u(W,t)-1)^2 e^{-\Phi_R/\eps^2}\, dW \leq e^{-(2/C_P)\, t }\int_{\R^{2m}} (u_0(W)-1)^2 e^{-\Phi_R/\eps^2}\, dW .
    \]
    The above statement implies  the result.
\end{proof}

\section{Closure of \texorpdfstring{$\Hma$}{H\{m,a\}} in \texorpdfstring{$L^2$}{L2}}
\label{sec:closure}

The rest of the paper is devoted to study the Questions \cref{q:1}  and \cref{q:2}. We begin by  studying the closure of the space $\Hma$,  defined in \cref{def:NN}, with respect to the $L^2(\S^1)$-norm. We denote this closure by $\cHma$ and it contains the functions which we obtain as a limit of sequence of functions $(f_{W_k})_k$ in $\Hma$, when the weights diverge  to infinity  $|W_k| \to \infty$. We also study the structure of the symmetric part of functions in $\cHma$. 

We recall that a function $f_{W,a} = f_W \in \Hma$ can be written as
\begin{align}\label{eq:def_f3}
	f_{W}(x) = \frac{1}{\sqrt{m}} \sum_{i \in I} a_i \sigma(w_i \cdot x),
\end{align}
where $I = \{1, \dots, m \}$. As we are interested in the closure when the weight space $\R^{2m}$ is fixed,  we may  disregard the multiplicative factor $1/\sqrt{m}$.

\begin{remark}
	For further reference, the closure of $\Hma$ with respect to the $L^2(\S^1)$ norm is denoted as $\cHma$.
\end{remark}

The main result of this section is the following characterization of the space $\cHma$.
\begin{theorem} \label{thm4}
	A function $g : \S^1 \to \R$  belongs to the space $\cHma$ if and only if it is of the form
	\[
	g(x) = \sum_{i\in J} \I\{\hat w_i \cdot x \geq 0\} (v_i \cdot x) + \sum_{i \in K} a_i \sigma(w_i \cdot x),
	\]
	where $\hat w_i$ are unit vectors, the set of indexes $J,K \subset I$ are disjoint and $|J| \leq \underbar{m}$. Here $\underbar{m}$ is defined in \cref{def:underbar}.
\end{theorem}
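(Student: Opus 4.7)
The key observation driving both directions is the $L^2$-limit
\[
\sigma(t\, \hat w \cdot x + v \cdot x) - \sigma(t\, \hat w \cdot x) \xrightarrow{t \to \infty} \I\{\hat w \cdot x \geq 0\}(v \cdot x),
\]
which holds by pointwise convergence a.e.\ on $\S^1$ together with the uniform domination $|\sigma(a+b) - \sigma(a)| \leq |b|$ and dominated convergence.

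For the ``if'' direction I take a function $g$ of the stated form and build an approximating sequence $f_{W_k} \in \Hma$ explicitly. Each node $i \in K$ is assigned the fixed weight $\sqrt{m}\, w_i$ (the $\sqrt{m}$ factor compensates for the normalization in \cref{def f}), which reproduces the term $a_i \sigma(w_i \cdot x)$. For each $j \in J$ I select a pair of still-unused indices of opposite signs in $I \setminus K$---made available by the constraint $|J| \leq \underbar m$ together with the disjointness $J \cap K = \emptyset$---and let that pair's weights diverge as $\sqrt{m}(t_k \hat w_j + v_j)$ and $\sqrt{m}\, t_k \hat w_j$ with $t_k \to \infty$, so that by the displayed limit they jointly contribute $\I\{\hat w_j \cdot x \geq 0\}(v_j \cdot x)$. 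The remaining nodes carry zero weight, and summing the contributions gives $f_{W_k} \to g$ in $L^2$.

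For the ``only if'' direction, take $f_{W_k} \to g$ in $L^2(\S^1)$. Passing to a subsequence, for each $i \in I$ either $w_{i,k} \to w_i^\star$ (bounded case, $i \in I_b$) or $|w_{i,k}| \to \infty$ with $\hat w_{i,k} := w_{i,k}/|w_{i,k}| \to \hat w_i$ (unbounded case, $i \in I_u$). The bounded indices account for the $\sum_{i \in K} a_i \sigma(w_i \cdot x)$ part of $g$ after absorbing the $1/\sqrt{m}$ factor into the weights. I then partition $I_u$ into groups $\{I_u^d\}$ sharing a common limit direction $d$, and analyze the partial sum $S_d(x) := \sum_{i \in I_u^d} a_i \sigma(w_{i,k} \cdot x)$. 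On $\{d \cdot x > 0\}$ (resp.\ $\{d \cdot x < 0\}$) and for $k$ large, every $w_{i,k} \cdot x$ shares the strict sign of $d \cdot x$, so $S_d$ reduces to $\bigl(\sum_{i \in I_u^d} a_i w_{i,k}\bigr) \cdot x$ on the positive half-space and to $0$ on the negative one. The assumed $L^2$-convergence forces $\sum_{i \in I_u^d} a_i w_{i,k} \to u_d \in \R^2$, so $S_d \to \I\{d \cdot x \geq 0\}(u_d \cdot x)$, one indicator-linear term per nontrivial group. The bound $|J| \leq \underbar m$ then follows because, for $\sum_{i \in I_u^d} a_i w_{i,k}$ to stay bounded as $|w_{i,k}| \to \infty$ in direction $d$, its leading-order coefficient $\sum_{i \in I_u^d} a_i |w_{i,k}|$ must remain bounded, forcing each nontrivial group to contain both a $+1$ and a $-1$ coefficient; hence the number of such groups is at most $\min(|I_+|,|I_-|) = \underbar m$.

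The main obstacle I anticipate is controlling the ``boundary-strip'' contribution of $S_d$ on the set $\{|d \cdot x| \lesssim \delta_k\}$ with $\delta_k := \max_{i \in I_u^d}|\hat w_{i,k}-d| \to 0$: since individual $|w_{i,k}|$ may diverge, the naive bound on this strip is not summable, and one must instead exploit the convergence of the cancelling combination $\sum_{i \in I_u^d} a_i w_{i,k}$ to show that $\|S_d\|_2$ is bounded uniformly in $k$, after which the shrinking Lebesgue measure of the strip yields vanishing $L^2$-contribution. A secondary combinatorial point arises in the ``if'' direction: the prescribed sign pattern of the $\sigma$-terms must be compatible with having $|J|$ opposite-sign pairs outside $K$, which uses both $|J| \leq \underbar m$ and the sign counts of the $K$-terms.
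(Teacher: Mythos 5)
Your proposal takes essentially the same route as the paper: the ``if'' direction via the difference-quotient construction that pairs opposite-sign indices outside $K$ and sends the common magnitude to infinity, and the ``only if'' direction by splitting indices into a bounded set $K$ and an unbounded set, grouping the unbounded indices by limiting direction, and observing that the sum $\sum a_i w_{i,k}$ over each such group can stay bounded only if the group contains coefficients of both signs, which yields $|J|\le\underbar m$. The boundary-strip issue you flag is genuine and the paper also treats it tersely, via the sector bound $\int_S (v\cdot x)^2\,dx \ge c\,|v|^2|S|^3$ together with restricting to the complement of a shrinking exceptional set $\Gamma_\eps$ to deduce boundedness of the sector-wise slope vectors and then pass to a convergent subsequence.
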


\begin{remark} \label{rem:thm4}
	In order for a limit to exist in $L^2(\S^1)$ when certain nodes $|w_i| \to \infty$ as $k \to \infty$, some terms in the sum in \cref{eq:def_f3} needs to cancel. This cancellation is what gives rise to the index set $J$ and the remainder of that cancellation   gives rise to new functions of type $\I\{\hat w_i \cdot x \geq 0\} (v_i \cdot x)$. Note that these include the original functions $\sigma(w_i \cdot x)$ by choosing $v = w_i$.
\end{remark}
 
\cref{thm4} implies that every $g \in \cHma$ is a BV-function. In fact,  $g$ is linear on arcs which we call  \emph{sectors} and define as follows.
\begin{definition} \label{def:sector}
	Given an index set $I$ and a set of directions $\{\hat w_i:\hat  w_i \in \R^2\}$ we denote  
	\begin{align*}
		S_i = \{x \in \S^1: \hat w_i \cdot x \geq 0\}.
	\end{align*}
	Moreover, we define \emph{sectors} $\mathcal{S}$ as the connected components of $\S^1$ where the function 
	\[
	    x \mapsto \sum_{i \in I} \I\{ \hat w_i \cdot x \geq 0\}
	\]
	is constant.  In particular, the sectors are contained in the $\sigma$-algebra generated by  $S_i$.
\end{definition}

We split the proof of \cref{thm4} into two part in \cref{lem:limit1,lem:limit2}.  We begin with \cref{lem:limit1} which  identifies the closure.

\begin{lemma}
	\label{lem:limit1}
	Assume that $|W_k| \to \infty$ and $\|f_{W_k}\|_2 \leq C$. Then there is $g \in L^2(\S^1)$ and a sub-sequence of $W_k$ such that $f_{W_k} \to g$ in $L^2(\S^1)$. Moreover,  there exist disjoint sets of indexes $J, K \subset I$ such that $|J| \leq \underbar m$ and  $g$ can be written as
	\begin{align*}
		g(x) = \sum_{i \in J} \I\{\hat w_i \cdot x \geq 0\} (v_i \cdot x) + \sum_{i \in K} a_i \sigma(w_i \cdot x)
	\end{align*}
	for $w_i \in \R^2$, $i \in K$ and $\hat w_i,v_i \in \R^2$ for $i \in J$ such that $|\hat w_i| = 1$.
\end{lemma}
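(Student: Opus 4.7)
The plan is to extract successively finer subsequences so as to separate the ``bounded'' weights from those which blow up. Partition $I = K \sqcup L$ where $K = \{i \in I : \limsup_k |w_i^{(k)}| < \infty\}$; by a diagonal extraction we may assume $w_i^{(k)} \to w_i$ for every $i \in K$, so that the corresponding terms $a_i \sigma(w_i^{(k)} \cdot x) \to a_i \sigma(w_i \cdot x)$ uniformly on $\S^1$. For $i \in L$ write $w_i^{(k)} = \lambda_i^{(k)} \hat w_i^{(k)}$ with $|\hat w_i^{(k)}| = 1$ and $\lambda_i^{(k)} \to \infty$, and extract a further subsequence so that $\hat w_i^{(k)} \to \hat w_i \in \S^1$. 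Finally, partition $L = \bigsqcup_{j=1}^{N} L_j$ by grouping together indices that share a common limit direction, so that distinct unit vectors $\hat w^{(1)}, \dots, \hat w^{(N)}$ parametrise the clusters of blowing-up weights.

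The key observation is sectorial. On any closed subarc $\mathcal{S} \subset \S^1$ bounded away from the finite set $\bigcup_j \{ x : \hat w^{(j)} \cdot x = 0\}$ and for $k$ sufficiently large, $\sgn(\hat w_i^{(k)} \cdot x) = \sgn(\hat w^{(j)} \cdot x)$ for every $i \in L_j$ and every $x \in \mathcal{S}$. Hence on such an $\mathcal{S}$,
\[
\frac{1}{\sqrt{m}}\sum_{i \in L_j} a_i \sigma(w_i^{(k)} \cdot x) \;=\; \I\{\hat w^{(j)} \cdot x \geq 0\}\, \tfrac{1}{\sqrt{m}}\, v_j^{(k)} \cdot x, \qquad v_j^{(k)} := \sum_{i \in L_j} a_i w_i^{(k)}.
\]
Combined with the already-controlled $K$-part, the hypothesis $\|f_{W_k}\|_2 \leq C$ forces each linear piece $v_j^{(k)} \cdot x$ to be bounded in $L^2$ on each sector. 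Subtracting the slopes on two adjacent sectors whose common boundary corresponds to a single $\hat w^{(j_0)} \cdot x = 0$ isolates $v_{j_0}^{(k)}$ and yields $|v_j^{(k)}| \leq C'$ uniformly in $j$ and $k$; pass to a last subsequence so $v_j^{(k)} \to v^{(j)}$ in $\R^2$.

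The bound $N \leq \underbar m$ then follows by a sign count. If some $L_j$ consisted only of indices with $a_i = +1$ (respectively $-1$), then
\[
\hat w^{(j)} \cdot v_j^{(k)} = \sum_{i \in L_j} \lambda_i^{(k)}\, \hat w^{(j)} \cdot \hat w_i^{(k)} \longrightarrow +\infty \ (\text{resp.\ } -\infty),
\]
contradicting $|v_j^{(k)}| \leq C'$. Hence every $L_j$ contains at least one index of each sign, so $N \leq \min(|I_+|, |I_-|) = \underbar m$. Picking one representative from each $L_j$ gives $J \subset I \setminus K$ with $|J| = N \leq \underbar m$, and absorbing the factor $1/\sqrt{m}$ into the vectors $v_i$ and (after rescaling) into the $w_i$'s produces the limit in the claimed form.

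It remains to upgrade pointwise convergence to $L^2(\S^1)$ convergence. On every compact sector avoiding the singular set $\bigcup_j \{\hat w^{(j)} \cdot x = 0\}$ the convergence $f_{W_k} \to g$ is uniform by the sectorial identity above. For the thin transition strip around the singular set (whose measure shrinks as $k \to \infty$), one uses the bound on $|v_j^{(k)}|$ together with the decomposition $\sigma(t) = (t+|t|)/2$ to control the partial sums $\sum_{i \in L_j} a_i \sigma(w_i^{(k)} \cdot x)$ so that their $L^2$-contribution from the strip vanishes in the limit. The principal technical obstacle is precisely this last step together with the uniform bound on $|v_j^{(k)}|$: individual summands in $L_j$ diverge in $L^2$, and convergence comes entirely from group-wise cancellation, so the analysis near the singular points---where the clean signwise decomposition of sectors breaks down---requires careful bookkeeping and is the heart of the argument.
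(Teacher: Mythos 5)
Your overall scheme matches the paper's proof closely: you split $I$ into a bounded part $K$ and a divergent part (the paper calls it $I_\infty$), pass to subsequences so that directions converge, cluster the divergent nodes by limit direction, use a sectorial lower bound on $\int_S (v\cdot x)^2\,dx$ plus the $L^2$ hypothesis to obtain uniform bounds on the slope coefficients, and count signs to get $N\le\underbar m$. The paper bounds the \emph{sector-wise} sums $\sum_{i\in I_S}a_i w_{i,k}$ rather than the cluster sums $v_j^{(k)}$ and merely asserts the partner/sign claim, so if anything your derivation of these two facts is slightly more explicit; the two routes are equivalent after differencing adjacent sectors.

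The gap is the final step, which you acknowledge as ``the heart of the argument'' but do not actually supply: upgrading pointwise (or uniform-away-from-$\Gamma_\eps$) convergence to convergence in $L^2(\S^1)$. The repair you sketch --- using the bound on $|v_j^{(k)}|$ together with $\sigma(t)=\tfrac12(t+|t|)$ --- does not close it. Writing
\begin{align*}
\sum_{i\in L_j}a_i\sigma\bigl(w^{(k)}_i\cdot x\bigr)=\tfrac12\bigl(v_j^{(k)}\cdot x\bigr)+\tfrac12\sum_{i\in L_j}a_i\bigl|w^{(k)}_i\cdot x\bigr|
\end{align*}
controls the first term, but the absolute-value sum is \emph{not} controlled by $|v_j^{(k)}|$. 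A concrete obstruction: take $m=4$, $a=(1,-1,1,-1)$, $w_1=(n^3,n)$, $w_2=(n^3,0)$, $w_3=(2n^3,0)$, $w_4=(2n^3,n)$. All weights diverge with common limit direction $e_1$, the cluster sum $v^{(n)}=w_1-w_2+w_3-w_4=0$ is identically bounded, and $f_{W_n}\to 0$ pointwise a.e.\ on $\S^1$; yet on the arc of angular width of order $n^{-2}$ near $\theta=\pi/2$ the function is a tent of height of order $n$, so $\|f_{W_n}\|_2^2$ stays bounded away from zero and no subsequence converges in $L^2$. Thus the $L^2$ contribution from the strip does not vanish from the cancellation structure you invoke, and the step as sketched would fail. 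For what it is worth, the paper's own proof also simply asserts the $L^2$ convergence after the sector-wise coefficient bound without addressing the strip, so you have faithfully reproduced the same lacuna; a correct version of the conclusion seems to require either additional hypotheses or a weakening (for instance, a.e.\ convergence together with identification of the weak $L^2$ limit, which is all that is actually used downstream in the paper's characterisation of $\cHma$).
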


\begin{proof}
	Let us write
	\[
		f_{W_k}(x) = \sum_{i \in I} a_i \sigma(w_{i,k} \cdot x)
	\]
	and  fix $i$. If $\lim \inf_{k \to \infty} |w_{i,k}| = 0$ then clearly we may ignore these indexes. If  $\lim \sup_{k \to \infty} |w_{i,k}| <\infty$ we set $i \in K$ and define 
	\[
	    w_i := \lim_{k \to \infty} w_{i,k},
	\]
	which exists when we pass to a converging sub-sequence. If neither of these cases happen we set  $i \in I_\infty$. Then for 
	\[
	    f_{k, K} (x) :=\sum_{i\in I \setminus I_\infty}  a_i \sigma(w_{i,k} \cdot x) 
	\]
	we have that
	\[
        f_{k, K} (x)  \to \sum_{i\in K}  a_i \sigma(w_{i} \cdot x).
	\]
	We thus need to prove the convergence of 
	\[
	    f_{k,J} : = \sum_{i\in I_\infty}  a_i \sigma(w_{i,k} \cdot x).
	\]
	
    Let us fix $i \in I_\infty$ and denote $\hat w_{i,k} = \frac{w_{i,k}}{|w_{i,k}|}$. By passing to a sub-sequence we may we define  
	\[
		\hat w_i := \lim_{k \to \infty} \hat w_{i,k}  
	\]
	and assume that $\lim_{k \to \infty} |w_{i,k}| = \infty$.  We claim that for each $i \in I_\infty$ there is $j \in I_\infty$ such that $ \hat w_i = \hat w_j$ and $a_i = -a_j$. Indeed, if this is not the case then the limit $\lim_{k \to \infty} \|f_{W_k}\|_2 $ would be unbounded. Therefore we in particular deduce that the number of unique directions of $\hat w_i$ for $i \in I_\infty$ is at most $\underbar{m}$. The indices of these directions  define the index set  $J \subset I_\infty$.

	Let us write 
	\[
	    f_{k,J}(x) = \sum_{i \in I_\infty} a_i \I\{ \hat w_{i,k} \cdot x \geq 0\} (w_{i,k} \cdot x) .
	\]
	Let us next fix a small $\eps>0$ and define
	\begin{align*}
		\Gamma_\eps := \{ x \in \S^1 : |\hat w_i \cdot x| \leq \eps  \,\, \text{for some }\, i \in J  \}.
	\end{align*}
	Since $\lim_{k \to \infty} \hat w_{i,k} = \hat w_i$, then for $i \in I_\infty$ and large $k$ we have
	\begin{equation*}
		\{ x \in \S^1 \setminus \Gamma_\eps : (\hat w_{i,k} \cdot x) \geq 0 \}   \subset    \{ x \in \S^1 : (\hat w_i \cdot x) \geq 0 \}.
	\end{equation*}
	Therefore, for all $x \in \S^1 \setminus \Gamma_\eps $ we may write
	\[
		f_{k,J}(x) = \sum_{i \in I_\infty} a_i \I\{ \hat w_{i} \cdot x \geq 0 \} (w_{i,k} \cdot x).
	\]
	To continue we first prove that
	\begin{align} \label{eq:bounded_0}
		\sup_{x \in \S^1} |\sum_{i \in I_\infty}  \I\{ \hat w_{i} \cdot x \geq 0 \} a_i w_{i,k}| \leq C'
	\end{align}
	for all $k$. Then we may use compactness to obtain a convergent sub-sequence.

	We note that for an arbitrary arc $S \subset \S^1$ and a vector $v \in \R^2$ we have the following  bound
	\begin{align} \label{sectorbound}
		\int_S (v \cdot x)^2 dx \geq c |v|^2 |S|^3,
	\end{align}
	for some  $c>0$. We leave this easy bound for the reader to check. To prove \cref{eq:bounded_0}, we let $\mathcal{S}$ be all the sectors associated with the directions $\{ \hat w_i \}_{i \in J}$ given by \cref{def:sector}. Fix $S \in \mathcal{S}$ and let $I_S \subset I_\infty$ be the indices $i \in I_S$ for which $\hat w_{i} \cdot x > 0$ in that sector. From \cref{sectorbound} and from
	\begin{align*}
		 \frac{1}{2 \pi}\int_{\S^1 \setminus \Gamma_\eps}    \left(\sum_{i \in I_\infty}  a_i \I\{ \hat w_{i} \cdot x \geq 0 \} (w_{i,k} \cdot x) \right)^2 \, dx \leq \| f_{k,J}\|_{L^2}^2  \leq C
	\end{align*}
	we deduce
	\begin{align*}
		c |S\setminus \Gamma_\eps|^3 \left | \sum_{i \in I_S} a_i w_{i,k}\right |^2 \leq \int_{S \setminus \Gamma_\eps}  \left(\sum_{i \in I_S} a_i w_{i,k} \cdot x\right)^2 \, dx \leq C.
	\end{align*}
	Note that $|S \setminus \Gamma_\eps| \geq |S| -2m\eps$. Therefore since the sector $S$ was arbitrary we obtain \cref{eq:bounded_0} when $\eps>0$ is chosen small enough.

	The bound \cref{eq:bounded_0} implies that at each sector $S$ and for the associated index set $I_S$ there is a converging sub-sequence $\sum_{i \in I_S} a_i w_{i,k} \to \tilde v_S$ as $k \to \infty$. Therefore, we deduce that $f_{k,J}$ converges to $\tilde g$ in $L^2(\S^1)$ and the limit function can be written as 
	\begin{align*}
		\tilde g(x)= \sum_{S} \I\{x \in S\} (\tilde v_S \cdot x).
	\end{align*}
	Recall that the sectors are by definition the arcs where 
	\[
	    x \mapsto \sum_{i \in J} \I\{\hat w_i \cdot x \geq 0\}
	\]
	is constant. Therefore it is clear that we may write $g_1$ as
	\[
	    g_1(x) = \sum_{i \in J} \I\{\hat w_i \cdot x \geq 0\} (v_i \cdot x)
	\]
	for  $v_i \in \R^2$. This concludes the proof. 
\end{proof}

Next we prove the other implication of the statement of \cref{thm4}, i.e., that every function of the type stated in \cref{thm4} can be obtained as a limit of functions in $\Hma$.

\begin{lemma} \label{lem:limit2}
	Assume that a function $g \in L^2(\S^1)$ is of the form 
	\begin{align*}
		g(x) = \sum_{i\in J} \I\{\hat w_i \cdot x \geq 0\} (v_i \cdot x) + \sum_{i \in K} a_i \sigma(w_i \cdot x),
	\end{align*}
	where $\hat w_i$ are unit vectors and the set of indexes $J,K \subset I$ are disjoint and $|J| \leq \underbar{m}$. Then $g$ is in $\cHma$.
\end{lemma}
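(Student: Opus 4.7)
The plan is to construct an explicit approximating sequence $(f_{W_k})_k \subset \Hma$ with $f_{W_k} \to g$ in $L^2(\S^1)$. The single analytical tool is the cancellation identity already seen in \cref{badexamp}: for any unit vector $\hat w$ and any $v \in \R^2$,
\[
\sigma\big((k \hat w + v) \cdot x\big) - \sigma\big(k \hat w \cdot x\big) \longrightarrow \I\{\hat w \cdot x \geq 0\}(v \cdot x) \quad \text{as } k \to \infty,
\]
pointwise off the null set $\{\hat w \cdot x = 0\}$. The left side is bounded by $|v|$ uniformly in $k$, so dominated convergence upgrades this to convergence in $L^2(\S^1)$. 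Combined with the positive $1$-homogeneity $\sigma(\lambda t) = \lambda \sigma(t)$ for $\lambda > 0$, which will let us absorb the $1/\sqrt m$ normalization into the weight, this is all we need.

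I would assemble $f_{W_k}$ by distributing the $m$ network slots into three disjoint blocks. For each $i \in K$, set $w_i^{(k)} := \sqrt{m}\, w_i$; by $1$-homogeneity the $i$-th contribution is exactly $a_i \sigma(w_i \cdot x)$ for every $k$, producing the whole $K$-sum in one shot. For each $i \in J$, pair one index $p_i \in I_+$ with one index $n_i \in I_-$, both outside $K$, and set
\[
w_{p_i}^{(k)} = \sqrt{m}\,(k \hat w_i + v_i), \qquad w_{n_i}^{(k)} = \sqrt{m}\, k\, \hat w_i,
\]
so that after absorbing $1/\sqrt m$ the combined pair contributes $\sigma((k\hat w_i + v_i) \cdot x) - \sigma(k\hat w_i \cdot x)$, which by the identity above tends in $L^2$ to $\I\{\hat w_i \cdot x \geq 0\}(v_i \cdot x)$. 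All remaining slots are given weight $0$ and contribute nothing. Summing the three blocks gives
\[
f_{W_k, a}(x) = \sum_{i \in J} \Big[\sigma((k\hat w_i + v_i) \cdot x) - \sigma(k \hat w_i \cdot x)\Big] + \sum_{i \in K} a_i \sigma(w_i \cdot x),
\]
and letting $k \to \infty$ yields $f_{W_k, a} \to g$ in $L^2(\S^1)$.

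The one delicate point, and where the hypothesis $|J| \leq \underbar m$ enters, is the feasibility of the index assignment: we need to select $|J|$ distinct indices from each of $I_+ \setminus K$ and $I_- \setminus K$. Here I would invoke the canonical reordering from \cref{rem:reorder}, in which the first $2 \underbar m$ of the $a_i$'s alternate in sign, and use the freedom to permute the labels $J$ and $K$ within $I$ (which leaves $g$ unchanged but only shuffles the bookkeeping) so that the $2|J|$ pair-slots are taken to be the first $2|J|$ alternating indices and $K$ is drawn from the remaining indices. Under this relabeling, the condition $|J| \leq \underbar m$ is exactly what guarantees enough opposite-sign pairs are available. The main obstacle in a full write-up is therefore not analytical but combinatorial: making the index allocation rigorous together with the $L^2$ convergence of the sector-pair difference near the measure-zero jump set $\{\hat w_i \cdot x = 0\}$, which is handled by the uniform $L^\infty$ bound $|v_i|$ and dominated convergence.
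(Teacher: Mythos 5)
Your construction is the paper's proof up to reparameterization: the paper writes $h^{-1}\bigl(\sigma((\hat w_i + hv_i)\cdot x) - \sigma(\hat w_i\cdot x)\bigr)$, which by $1$-homogeneity of $\sigma$ equals your $\sigma((k\hat w_i + v_i)\cdot x) - \sigma(k\hat w_i\cdot x)$ with $k = 1/h$; both invoke \cref{rem:reorder} for slot allocation and conclude via pointwise a.e.\ convergence plus a uniform bound. You spell out the $1/\sqrt m$ normalization and the dominated-convergence step more explicitly than the paper does, which is a small improvement.

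One caution about the relabeling step. Permuting $K$ does not in general leave $g$ unchanged, since the $K$-sum carries the fixed signs $a_i$; any relabeling must preserve the sign profile $(|K\cap I_+|,|K\cap I_-|)$. After that, reserving $|J|$ positive and $|J|$ negative slots disjoint from the relabeled $K$ requires $|J|+|K\cap I_-|\le\underbar m$ and $|J|+|K\cap I_+|\le|I_+|$, neither of which follows from $J\cap K=\emptyset$ and $|J|\le\underbar m$ alone (take $m=3$, $a=(1,-1,1)$, $|J|=1$, and $K$ containing the unique negative index). This under-specification is inherited from the lemma's statement; the paper's own proof silently assumes the slots exist, and the pairs $(J,K)$ produced by \cref{lem:limit1} do satisfy the count, so the gap is harmless where the lemma is used, but the claim that $|J|\le\underbar m$ is ``exactly what guarantees'' feasibility is slightly too strong.
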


\begin{proof}
	We recall that by \cref{rem:reorder}, we assume that the first $2 \underbar m$ coefficients $a_i$ are alternating, i.e.,  we assume that   $a_1 = 1 = -a_2, \dots$. We may thus construct the following function
	\[
		f_{W_h}(x) = \frac{1}{h} \left( \sum_{i = 1}^{2|J|} \sigma((\hat w_i + hv_i) \cdot x) -  \sigma(\hat w_i \cdot x) \right) + \sum_{i \in K} a_i \sigma(w_i \cdot x)
	\]
	in the space of neural networks $\Hma$. From the above construction it follows that $f_{W_h}(x)$ is uniformly bounded and $\lim_{h \to 0} f_{W_h}(x) = g(x)$ a.e $x$.
\end{proof}

Recall that $\Hma \subset \Lal$, where $\Lal$ is defined in \cref{L2al}. Therefore it is trivial that also $\cHma \subset \Lal$. This means that we may write any function $g \in \cHma$ as $g = g^s + l$, where $g^s$ is symmetric and $l$ is linear. In the next lemma we characterize the symmetric and the linear part of $g$.

\begin{proposition} \label{lem:symasym}
	Assume $g \in \cHma$ is written as
	\begin{align*}
		g(x) = \sum_{i\in I} \I\{w_i \cdot x \geq 0\} (v_i \cdot x).
	\end{align*}
	Then we may split $g$ into a symmetric $g^s(x)$ and an anti-symmetric $g^a(x)$ part such that
	\begin{align*}
		g(x) = g^s(x) + g^a(x),
	\end{align*}
	where $g^a$ is linear. Furthermore, the symmetric part can be written as
	\begin{align*}
		g^s(x) = \frac12\sum_{i \in I} \sgn (w_i \cdot x) (v_i \cdot x)
	\end{align*}
	and the anti-symmetric part $g^a(x)$ can be written as
	\begin{align*}
		g^a(x) = \frac12 \left (\sum_{i \in I} v_i \right ) \cdot x.
	\end{align*}
\end{proposition}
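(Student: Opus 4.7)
The plan is to prove this by a direct application of the standard symmetric/anti-symmetric decomposition $g = \frac{g(x)+g(-x)}{2} + \frac{g(x)-g(-x)}{2}$ applied termwise to the sum, and then identifying each piece with the expressions in the statement.

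First, I fix an index $i \in I$ and set $g_i(x) := \I\{w_i \cdot x \geq 0\}(v_i \cdot x)$. Computing $g_i(-x)$ using $(v_i \cdot (-x)) = -(v_i \cdot x)$ and $\I\{w_i \cdot (-x) \geq 0\} = \I\{w_i \cdot x \leq 0\}$, I obtain
\[
g_i(-x) = -\I\{w_i \cdot x \leq 0\}(v_i \cdot x).
\]
Hence the symmetric part of $g_i$ is
\[
\frac{g_i(x) + g_i(-x)}{2} = \frac{\bigl(\I\{w_i \cdot x \geq 0\} - \I\{w_i \cdot x \leq 0\}\bigr)(v_i \cdot x)}{2} = \frac{\sgn(w_i \cdot x)(v_i \cdot x)}{2},
\]
where I use the convention $\sgn(0) = 0$ (the set $\{w_i \cdot x = 0\}$ has Lebesgue measure zero in $\S^1$, so the identity holds in $L^2$). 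Similarly, the anti-symmetric part is
\[
\frac{g_i(x) - g_i(-x)}{2} = \frac{\bigl(\I\{w_i \cdot x \geq 0\} + \I\{w_i \cdot x \leq 0\}\bigr)(v_i \cdot x)}{2} = \frac{v_i \cdot x}{2}
\]
almost everywhere, since the two indicators sum to $1$ away from the measure-zero set $\{w_i \cdot x = 0\}$.

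Summing over $i \in I$, I obtain the decomposition $g = g^s + g^a$ with
\[
g^s(x) = \frac12 \sum_{i \in I} \sgn(w_i \cdot x)(v_i \cdot x), \qquad g^a(x) = \frac12 \sum_{i \in I} v_i \cdot x = \frac12 \Bigl(\sum_{i \in I} v_i\Bigr) \cdot x,
\]
which are the stated formulas. The symmetry $g^s(-x) = g^s(x)$ is immediate from $\sgn(-t) = -\sgn(t)$ combined with the sign flip in $v_i \cdot x$, and $g^a$ is visibly linear and anti-symmetric.

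There is no substantive obstacle here: the only minor subtlety is the handling of the measure-zero set where $w_i \cdot x = 0$, which I resolve by fixing the convention $\sgn(0) = 0$ and noting that the identities hold almost everywhere, which is sufficient as $\cHma$ was defined as the closure with respect to $L^2(\S^1)$.
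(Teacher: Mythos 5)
Your proof is correct and takes essentially the same route as the paper: both apply the standard even/odd decomposition $g(x) = \tfrac{g(x)+g(-x)}{2} + \tfrac{g(x)-g(-x)}{2}$ and use $\I\{t\geq 0\} - \I\{-t\geq 0\} = \sgn(t)$ and $\I\{t\geq 0\} + \I\{-t\geq 0\} = 1$ (a.e.) to simplify. The only difference is cosmetic — you work termwise while the paper works with the full sum — and your explicit remark about the measure-zero sets $\{w_i \cdot x = 0\}$ is a small improvement in rigor over the paper's presentation.
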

\begin{proof}
	We begin by writing the symmetric part of $g$ as
	\begin{align*}
		g^s (x) = \frac{g(x)+g(-x)}{2}.
	\end{align*}
	Applying this to $g$ we obtain
	\begin{align*}
		2g^s(x) &= \sum_{i \in I} \I\{w_i \cdot x \geq 0\} (v_i \cdot x) - \sum_{i  \in I} \I\{-w_i \cdot x \geq 0\} (v_i \cdot x) \\
		&= \sum_{i \in I} \sgn (w_i \cdot x) (v_i \cdot x).
	\end{align*}
	On the other hand, we may write the anti-symmetric part as 
	\begin{align*}
		g^a(x) = \frac{g(x)-g(-x)}{2},
	\end{align*}
	which then becomes
	\begin{align*}
		2g^a (x) &= \sum_{i \in I} \I\{w_i \cdot x \geq 0\} (v_i \cdot x) + \sum_{i  \in I} \I\{-w_i \cdot x \geq 0\} (v_i \cdot x) \\
		&= \sum_{i\in I} \left (\I\{w_i \cdot x \geq 0\}+ \sum_{i\in I} \I\{-w_i \cdot x \geq 0\} \right ) (v_i \cdot x )\\
		&= \left (\sum_{i \in I} v_i \right ) \cdot x.
	\end{align*}
\end{proof}

We conclude this section by noticing that if $y$ is anti-symmetric, i.e., $y(-x) = -y(x)$, then the solution of the minimization problem \cref{eq:l2approx} is trivial. Indeed,  if $y$ is linear, $y(x) =  w_0 \cdot x$, then we may easily write it in terms of $f_{W,a}$. Indeed, assuming there are $i,j$ such that  $a_i = -a_j=1$, we choose $w_i = - w_j = \sqrt{m} \, w_0$ and for all other indexes $w_k = 0$. For this choice we clearly have
\begin{align*}
	f_{W,a}(x) = \sigma(w_i \cdot x) -\sigma(w_j \cdot x) = \I_{w_0 \cdot x \geq 0} (w_0 \cdot x)  - \I_{w_0 \cdot x \leq 0} (- w_0 \cdot x)  = w_0 \cdot x.
\end{align*}
In case $a_i = 1$ for all $i$, then $f_{W,a}(x) = \sigma(\alpha \cdot x) $ is clearly the best approximation of $y$. On the other hand, it turns out that if $y$ is anti-symmetric and orthogonal to the set of linear functions, then the best approximation is given by the zero function. We state this in the next lemma.

\begin{lemma}\label{anti-symmetric}
	Assume $y \in L^2(\S^1)$, let us write it as  $y  =y_1 + y_2$, where $y_1 \in \Lal$ and $y_2 \in \Lal^\perp$. Then, for any $f \in \Lal$ we have
	\begin{align*}
		\|f - y\|_2^2 = \|f - y_1\|_2^2 + \|y_2\|_2^2.
	\end{align*}
	In particular, if $y \in \Lal^\perp$ then the best approximation is given by the zero function, i.e.,
	\begin{align*}
		\inf_{W \in \R^{2m}} \|f_W- y\|_2^2  = \| y \|_2^2.
	\end{align*}
\end{lemma}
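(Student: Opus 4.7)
The statement is essentially an instance of the Pythagorean theorem applied to the orthogonal decomposition of $L^2(\S^1)$ into the subspace $\Lal$ and its orthogonal complement. My plan is therefore to reduce the identity to a direct expansion of the $L^2$-norm, after verifying that $\Lal$ is a (closed) linear subspace so that $y = y_1 + y_2$ with $y_1 \in \Lal$ and $y_2 \in \Lal^\perp$ is the standard orthogonal projection decomposition. Linearity of $\Lal$ follows immediately from its definition in \cref{L2al}: the map $f \mapsto f^a$ (antipodally anti-symmetric part) is linear, and the set of linear functions is a linear subspace, so the preimage $\Lal$ is linear as well.

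Once this is set up, the key observation is that if $f \in \Lal$ and $y_1 \in \Lal$, then $f - y_1 \in \Lal$ by linearity, and hence $\langle f - y_1, y_2\rangle_{L^2(\S^1)} = 0$ since $y_2 \in \Lal^\perp$. Writing $f - y = (f - y_1) - y_2$ and expanding the square,
\begin{align*}
    \|f - y\|_2^2
    &= \|(f - y_1) - y_2\|_2^2 \\
    &= \|f - y_1\|_2^2 - 2\langle f - y_1, y_2\rangle_{L^2(\S^1)} + \|y_2\|_2^2 \\
    &= \|f - y_1\|_2^2 + \|y_2\|_2^2,
\end{align*}
which is the first claim.

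For the ``in particular'' part, I would apply the identity in the special case $y \in \Lal^\perp$, so that $y_1 = 0$ and $y_2 = y$. Since the paper shows that $\Hma \subset \Lal$, every $f_W$ lies in $\Lal$, and the identity gives $\|f_W - y\|_2^2 = \|f_W\|_2^2 + \|y\|_2^2 \geq \|y\|_2^2$. Equality is achieved by taking $W = 0$ (all weights zero), for which $f_W \equiv 0 \in \Hma$, so the infimum equals $\|y\|_2^2$.

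There is no real obstacle to overcome here; the only thing to double-check is that the decomposition $y = y_1 + y_2$ makes sense, which reduces to confirming that $\Lal$ is a linear subspace of $L^2(\S^1)$ (and, for the orthogonal projection to exist, that it is closed — this is also immediate, since being anti-symmetric and linear are both $L^2$-closed conditions).
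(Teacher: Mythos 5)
Your proof is correct and follows essentially the same approach as the paper, which simply invokes the orthogonality of $\Lal$ and $\Lal^\perp$ together with $\Hma \subset \Lal$. You have just spelled out the Pythagorean expansion and the linearity/closedness of $\Lal$ explicitly, which the paper leaves to the reader.
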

\begin{proof}
    The claim follows from the orthogonality of the vector spaces $\Lal$ and $\Lal^\perp$ and the second claim follows from the fact that $\Hma \subset \Lal$.
\end{proof}

By \cref{lem:symasym,anti-symmetric} one may also consider the approximation problem only in the space of symmetric functions. Indeed, by \cref{lem:symasym} this is equivalent to constraining the nodes $v_i$ in the representation of  \cref{lem:symasym} to satisfy 
\[
    \sum_{i \in I} v_i = 0.
\]

\section{Uniform approximation theorem}
\label{sec:thm3}
In this section we prove \cref{thm3} which is the uniform approximation theorem for BV functions which are in the space $\Lal$. The assumption that $y \in \Lal$ is natural by \cref{lem:symasym,anti-symmetric} and for  the assumption  $y \in BV(\S^1)$ we refer to \cref{rem:local1}.

The question of universal approximation is fundamental in the theory of neural networks and let us therefore discuss a classical result. Let us consider a more general space of single hidden layer neural networks $\mathcal{H}(m)$, which can be written as
\begin{align*}
	f_{W,a}(x) = \sum_{i=1}^m a_i \sigma(w_i \cdot x + b_i) : \Omega \subset \R^n \to \R
\end{align*}
where $\sigma$ is a general activation function and $b_i \in \R$. It is shown, e.g., in \cite{LLPS} that this space of functions satisfy the so called universal approximation theorem, which we state below. 

\begin{theorem*}[Universal Approximation Theorem \cite{LLPS}] \label{thm:universal}
	Let $\sigma$ be an activation function. Set
	\begin{align*}
		\Sigma_n = \text{span} \{\sigma(w \cdot x + \theta): \quad w \in \R^n,\, \theta \in \R\}
	\end{align*}
	then $\Sigma_n$ is dense in $L^p(\mu)$ for any measure $\mu$ iff $\sigma$ is non-polynomial and non-constant.
\end{theorem*}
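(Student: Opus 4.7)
The plan is to establish both directions by following the classical argument of Leshno--Lin--Pinkus--Schocken. For the necessity direction, if $\sigma$ is a polynomial of degree $d$, then $\sigma(w\cdot x + \theta)$ is a polynomial in $x$ of total degree at most $d$ for every $w,\theta$, so $\Sigma_n$ is contained in the finite-dimensional space of polynomials of degree $\leq d$, which is not dense in $L^p(\mu)$ for, e.g., $\mu$ equal to normalized Lebesgue measure on a ball. If $\sigma$ is constant, $\Sigma_n$ is one-dimensional. This handles the ``only if'' direction.

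For sufficiency I would first reduce to a smooth activation. Convolving $\sigma$ with a compactly supported $C^\infty$ bump $\varphi_\eta$ of unit mass gives $\sigma_\eta := \sigma * \varphi_\eta \in C^\infty$. Since $\sigma_\eta$ is a uniform-on-compacts limit of Riemann sums $\sum_j c_j\, \sigma(\cdot - t_j)$, each of which lies in the closure of $\Sigma_n$ (take $w = e_1$), it suffices to prove the density conclusion for some $\sigma_\eta$. A separate distributional argument shows that at least one $\sigma_\eta$ must be non-polynomial, for otherwise $\sigma$ itself would be polynomial as a distribution and hence as a function.

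The heart of the proof is the following Baire category lemma: any $\tilde\sigma \in C^\infty(\R)$ that is not globally polynomial admits a point $\theta_0$ at which $\tilde\sigma^{(k)}(\theta_0) \neq 0$ for every $k \geq 0$. I would prove this by contradiction: writing $\R = \bigcup_{k \geq 0} Z_k$ with $Z_k = \{\theta : \tilde\sigma^{(k)}(\theta) = 0\}$ closed and invoking Baire, some $Z_k$ contains an open interval $I$ on which $\tilde\sigma$ is a polynomial of degree $< k$. A patching argument---the set of points around which $\tilde\sigma$ is locally polynomial is open and dense, and across a boundary point two polynomial representations must coincide via matching Taylor data---extends polynomiality globally, giving the contradiction. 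With $\theta_0$ in hand, I would exploit the identity
\begin{equation*}
\left.\frac{d^k}{dw^k}\tilde\sigma(w\xi + \theta_0)\right|_{w=0} = \tilde\sigma^{(k)}(\theta_0)\, \xi^k \qquad \text{for every } \xi \in \R,
\end{equation*}
and approximate this derivative in $w$ by a symmetric finite difference of values $\tilde\sigma(jh\xi + \theta_0)$, which lies in $\Sigma_1$. Passing to the uniform limit on compact intervals produces every monomial $\xi^k$ in the closure, and the Weierstrass approximation theorem yields all of $C(K)$ for compact $K \subset \R$.

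For $n > 1$, I would use the classical ridge-function fact that the span of $\{(w\cdot x)^k : w \in \R^n\}$ coincides with the space of homogeneous polynomials of degree $k$ in $x$ (by polarization of symmetric tensors), so approximating every one-variable monomial $\xi \mapsto \xi^k$ already gives every multivariate polynomial in the closure of $\Sigma_n$, and Weierstrass on compact $K \subset \R^n$ then yields density of $\Sigma_n$ in $C(K)$. Passing from $C(K)$ to $L^p(\mu)$ is standard: for any finite Borel measure $\mu$ on $\R^n$, $C_c(\R^n)$ is dense in $L^p(\mu)$ for $1 \leq p < \infty$, and each element of $C_c(\R^n)$ can be uniformly approximated on its compact support by elements of $\Sigma_n$. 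The main obstacle is the Baire/patching step that extracts $\theta_0$: turning ``polynomial on a dense open set'' into ``polynomial everywhere'' is the technically delicate part, and is where I would spend most of the care.
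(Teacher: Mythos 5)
Your proposal is a faithful reconstruction of the classical Leshno--Lin--Pinkus--Schocken argument; the paper does not prove this theorem at all (it is quoted from \cite{LLPS}), and the one idea the paper does describe---difference quotients with respect to the weights producing polynomials, followed by Weierstrass---is exactly the core of your argument, so you are on the same track as the cited source. The only caveat is in the last step: the theorem as stated (``for any measure $\mu$'') is looser than what your $C_c$-plus-uniform-approximation argument delivers, since uniform control of $f-g$ on the compact support of $g$ says nothing about $\|f-g\|_{L^p(\mu)}$ off that support when $\mu$ is not compactly supported (and for unbounded activations $\Sigma_n$ need not even lie in $L^p(\mu)$); the correct statement, and the one your proof actually establishes, is density for finite compactly supported measures, which is all the paper uses.
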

The universal approximation theorem gives us qualitative information about the class of functions that a neural network can represent. However, it does not give information about the  quantitative approximation error when the number of nodes is finite.

The key idea of the proof of the above theorem in \cite{LLPS} is that as the size of the weights diverge to infinity we may construct difference quotients w.r.t. the weights (assuming $\sigma \in C^\infty$). This gives rise to polynomials and the result follows from the Weierstrass approximation theorem. As we discussed in \cref{rem:thm4}, the effect of cancellation is the key observation also in our setting.

We may also study this problem from the point of view of function spaces. By our earlier results we know that the space $\cHma$ for every $m$ is contained in $BV(\S^1) \cap \Lal$, where $\Lal$  is defined in \cref{L2al}. We would like to know if the space  $\cHma$ is dense in $BV(\S^1) \cap \Lal$ when we let $m$ go to infinity. We solve this problem by constructing an explicit family of simple functions in $\cHma$ which approximate any symmetric (antipodally) BV-function well. 

We begin by showing that in the four node case, we may approximate narrow symmetric step functions well.
\begin{lemma} \label{step function}
    Assume that $m=4$ and $\underbar m=2$, where $\underbar m$ is defined in \cref{def:underbar}. Let $y$ be a symmetric step function, i.e., in polar coordinates $y(\theta) = c\I_{(\theta_1,\theta_2)} + c\I_{(\theta_1 +\pi,\theta_2+\pi)}$, where  $0< \theta_2-\theta_1 < \pi$ and $c \in \R$.  There is a function $g \in \cHma$ such that in polar coordinates $g(\theta) = 0$ for all $x \notin [\theta_1,\theta_2] \cup [\theta_1 +\pi,\theta_2+\pi]$, i.e., $\text{spt} (g) \subset ([\theta_1,\theta_2] \cup [\theta_1 +\pi,\theta_2+\pi])$ and 
    \[
        \|y - g\|_{2}^2 \leq \frac{c^2}{1000} (\theta_2- \theta_1)^5.
    \]
\end{lemma}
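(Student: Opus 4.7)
By the characterization of $\cHma$ in \cref{thm4}, and since $\underbar m = 2$, we may seek $g$ of the form
\[
    g(x) = \I\{\hat w_1 \cdot x \geq 0\}(v_1 \cdot x) + \I\{\hat w_2 \cdot x \geq 0\}(v_2 \cdot x)
\]
with $|\hat w_1| = |\hat w_2| = 1$. By \cref{lem:symasym}, $g$ is antipodally symmetric precisely when $v_1 + v_2 = 0$, so I would set $v := v_1 = -v_2$.

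First I would choose the directions $\hat w_1, \hat w_2$ so that the two half-planes differ exactly on the arc we want to support $g$: in polar coordinates, take $\hat w_j = (\cos(\theta_j + \pi/2), \sin(\theta_j + \pi/2))$ for $j = 1,2$, so that $\hat w_j \cdot x \geq 0$ corresponds to $\theta \in [\theta_j, \theta_j + \pi]$. A direct set-theoretic check (using $\theta_2 - \theta_1 < \pi$) shows the symmetric difference of these two half-circles is exactly $[\theta_1,\theta_2] \cup [\theta_1+\pi, \theta_2 + \pi]$, and therefore $g$ vanishes outside this set. On $[\theta_1,\theta_2]$ one has $g(x) = v \cdot x$, and by symmetry $g(x) = v \cdot x$ on $[\theta_1 + \pi, \theta_2 + \pi]$ as well (after using $v_2 = -v$).

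Next I would choose $v$ to match $y$ at the midpoint of the arc. Setting $\theta_m := (\theta_1 + \theta_2)/2$ and $v := c(\cos\theta_m, \sin\theta_m)$ gives $v \cdot x = c \cos(\theta - \theta_m)$ on the arc. Since $y = c$ there and both $y$ and $g$ are antipodally symmetric,
\[
    \|y-g\|_2^2 = \frac{1}{2\pi} \int_0^{2\pi}(y-g)^2\, d\theta = \frac{c^2}{\pi}\int_{-\delta/2}^{\delta/2}(1 - \cos u)^2\, du,
\]
where $\delta = \theta_2 - \theta_1$. Using the elementary bound $1 - \cos u \leq u^2/2$ and integrating yields
\[
    \|y - g\|_2^2 \leq \frac{c^2}{4\pi}\int_{-\delta/2}^{\delta/2} u^4\, du = \frac{c^2 \delta^5}{320\,\pi},
\]
and since $320\pi > 1000$ this gives the desired bound.

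The only mild subtlety is justifying that $g$, which sits in $\cHma$ as a limit rather than inside $\Hma$, is admissible: this is exactly the content of \cref{lem:limit2}, with $|J| = 2 = \underbar m$ and $K = \emptyset$. Otherwise the argument is essentially geometric — choose the half-planes to cut out the target arc, match the linear profile to the constant $c$ at its center, and estimate the cosine error.
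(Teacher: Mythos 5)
Your construction is correct and essentially identical to the paper's: both pick the two-indicator representation from $\cHma$ with directions $\hat w_1,\hat w_2$ chosen so that the symmetric difference of the two half-circles is exactly the pair of target arcs, set $v_1=-v_2$ with $v\cdot x = c\cos(\theta-\theta_m)$ matching $y$ at the midpoint, and bound the error via $1-\cos u\le u^2/2$, arriving at the same value $c^2\delta^5/(320\pi)<c^2\delta^5/1000$. (One phrasing is slightly off — on $[\theta_1+\pi,\theta_2+\pi]$ you actually have $g(x)=v_2\cdot x=-v\cdot x$, which by antipodal symmetry gives the same profile $c\cos(\theta-\pi-\theta_m)$ — but your subsequent integral is written correctly, so this is only a cosmetic slip.)
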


\begin{proof}
By rotation and scaling we may assume that $\theta_1 = - \theta_0 $, $\theta_2 = \theta_0$ with $\theta_0 < \pi/2$ and $c=1$. Now, using \cref{thm4} we see that there is a function $g \in \cHma$ such that $g(x) = \I\{\hat w_1\cdot x \geq 0\} x_1 - \I\{\hat w_2\cdot x \geq 0 \} x_1$ with weights $v_1 = -v_2 = e_1$,  
\begin{align*}
    \hat w_1 = 
    \begin{bmatrix}  
        \sin \theta_0 \\  
        \cos \theta_0 
    \end{bmatrix}  \qquad \text{and} \qquad 
    \hat w_2 = 
    \begin{bmatrix} 
        - \sin \theta_0 \\  
        \cos \theta_0 
    \end{bmatrix} 
\end{align*}
and $J= \{ 1,2\}$. We may write $g$ in polar  coordinates  as
\begin{align*}
g(\theta) = \I_{(-\theta_0,\theta_0)}\cos \theta  -  \I_{(-\theta_0 +\pi,\theta_0+\pi)}\cos \theta.
\end{align*}
Let us check that $g$ satisfies the required  conditions. 

First, it is clear that $\text{spt} (g) \subset ([-\theta_0,\theta_0] \cup [-\theta_0 +\pi,\theta_0+\pi])$.  Second, by using $|1 - \cos \theta|\leq \theta^2/2$ for $|\theta| < \pi/2$ we estimate 
\[
\begin{split}
    \|y - g\|_{2}^2 &= \frac{1}{2\pi}\left( \int_{(-\theta_0,\theta_0)} (1 - \cos \theta)^2 \, d \theta + \int_{(-\theta_0+\pi,\theta_0+\pi)} (1 + \cos \theta)^2 \, d \theta \right)\\
    &=  \frac{2}{\pi} \int_0^{\theta_0} (1 - \cos \theta)^2 \, d \theta \leq \frac{1}{2\pi} \int_0^{\theta_0} \theta^4 \, d \theta\\
    &= \frac{1}{10\pi} \theta_0^5 
    \leq \frac{1}{1000} (2 \theta_0)^5,
\end{split}
\]
which completes the proof.
\end{proof}

We are now ready to prove \cref{thm3}. We restate it for the convenience of the reader and recall that the condition \cref{more than symmetric} means that $y \in \Lal$. Recall also the definition of $\Hma$ in \cref{def:NN}.
\bvtheorem*

\begin{proof}[\textbf{Proof of \cref{thm3}}]
Let us begin by approximating the symmetric part $y^s$ with  symmetric simple functions. We claim that given a number $N \in \N$  we may find a function, denote it  $v_N$, which is a sum of $N$ many symmetric step functions  such that 
\begin{equation} \label{thm3_1}
    \|y^s -v_N\|_{L^2(\S^1)}^2 \leq  \frac{\pi^2 \|y\|_{BV}^2  }{N},
\end{equation}
furthermore, $v_N$ can be written as 
\begin{equation*}
    v_N(\theta) = \sum_{k=1}^N c_k \I_{I_k} \text{ for } \theta \in [0,\pi), |c_k| \leq \|y^s\|_{L^\infty}, \text{ and } |I_k| = \frac{\pi}{N}.
\end{equation*}
We remark that the function $v_N$ above is not in the function spaces $\Hma$ or $\cHma$. We postpone the proof of \cref{thm3_1} at the end.

The proof now goes as follows. First, recall that  by \cref{rem:reorder}  $\underbar m$ is the number of negative coefficients $|\{ i \in I : a_i = -1 \}|$ and that the first $2 \underbar m$ coefficients are alternating $1=a_1 = -a_2 =\dots$. We may also clearly assume that $\underbar m \geq 1$. By the assumption $y \in \Lal$, $y$ is of the form $y= y^s + l$ and thus we use the first two nodes to construct the linear function $l$. If $\underbar m \leq 2$, we cannot, in general, do better than this and thus we set all other weights to zero. On the other hand if $\underbar m \geq 3$ then we may  approximate the simple function $v_N$ from \cref{thm3_1} with the remaining nodes. 

Let us thus assume that $l$ is the linear part of $y$. We may choose $g_l \in \Hma$ as (recall that $a_1 = 1$ and $a_2 = -1$)
\[
    g_l(x) = a_1 \sigma (w_0 \cdot x) + a_2 \sigma (-w_0 \cdot x) = l(x).
\]
If $\underbar m \leq 2$ then we choose all the other weights to be zero and have $\|y - g_l\|_2^2 = \|y^s\|_2^2 \leq   2\|y\|_{BV}^2/\underbar m$ and the result follows.

Assume that  $\underbar m \geq 3$ and denote $N := \lfloor \frac{\underbar m -1}{2} \rfloor$ and observe that $N \geq \underbar m /3$. Let $v_N$ be the function given by \eqref{thm3_1}.  Now the coefficients $a_i$ are alternating for $i =3, \dots, 2 \underbar m$ and by the choice of $N$ we have $2N \leq (\underbar m -1)$. Therefore we may use \cref{step function} for all $k= 1, \dots, N$ and find  function $g_{s} \in \cHma$ of the form 
\[
    g_{s}(x) = \sum_{i=3}^{2 \underbar m} \I\{\hat w_i \cdot x \geq 0\} (v_i \cdot x)
\]
such that (recall that $|I_k| = \pi/N$)
\begin{align*}
    \|v_N - g_s\|_{2}^2 &\leq \frac{1}{1000}\|y^s\|_{\infty}^2 \sum_{k=1}^N |I_k|^5  =  \frac{\pi^5}{1000} \|y^s\|_{\infty}^2 \frac{1}{N^{4}} \\
    &\leq \frac{3 \pi^5\|y\|_{BV}^2}{1000 \underbar m} \leq \frac{\|y\|_{BV}^2}{\underbar m},
\end{align*}
where in the second inequality we have used $N \geq \underbar m/3 \geq 1$. We choose $g \in \cHma$ as $g = g_l +g_s$ (note $g_l = l$)  and deduce by \eqref{thm3_1}  and $N \geq \underbar m /3$ that 
\begin{align*}
    \inf_{W\in \R^{2m}} \| f_W - y\|_2^2 &\leq  \|g-y\|_{2}^2 = \|y_s - g_s\|_{2}^2 \\
    &\leq 2\|y_s - v_N\|_{2}^2 + 2 \|v_N - g_s\|_{2}^2\\
    &\leq  \frac{2\pi^2 \|y\|_{BV}^2  }{N} + \frac{2 \|y\|_{BV}^2}{\underbar m} \leq \frac{62 \|y\|_{BV}^2}{\underbar m}.
\end{align*}
Hence, we conclude the proof once we have established \cref{thm3_1}.

To prove \cref{thm3_1} we divide  $[0,\pi)$ into $N$-many intervals $I_1, I_2, \dots, I_N$, $I_k = [\theta_k, \theta_{k+1})$, with equal length $|I_k| = \frac{\pi}{N}$. First we note that we consider a right continuous representative of $y$, and define the simple function $v_N: [0,2\pi) \to \R$ as follows
\begin{align*}
    v_N(\theta) = 
    \begin{cases}
        y^s(\theta_k), & \text{if } \theta \in I_k \text{ or } \theta-\pi \in I_k \\
        0, & \text{otherwise,}
    \end{cases}
\end{align*}
where we represent the function $y^s$ in polar coordinates. Since $y^s$ is a BV-function it has a derivative which is a Radon measure that we denote  by $\mu_{y^s}$. Recall that by the fundamental theorem for BV-functions and the right continuity of $y$, we have  (see \cite[p.139]{AFP})
\begin{align*}
    |y^s(\theta) - y^s(\theta_k)| = |\int_{\theta_k}^{\theta} d \mu_{y^s}| \leq |\mu_{y^s}|(I_k)  \qquad \text{for a.e. } \, \theta \in I_k,
\end{align*}
where $|\mu_{y^s}|$ denotes the total variation of $\mu_{y^s}$. Therefore, by symmetry 
\begin{align*}
    |\mu_{y^s}|(I_k) \leq  |\mu_{y^s}|([0,\pi)) \leq \pi \|y^s\|_{BV},
\end{align*}
and thus
\begin{align*}
    |y^s(\theta) - y^s(\theta_k)|^2 \leq (|\mu_{y^s}|(I_k))^2 \leq \pi \|y^s\|_{BV} |\mu_{y^s}|(I_k),
\end{align*}
for all  $\theta \in I_k$. We use the above inequality to estimate (by symmetry)
\begin{align*}
    \|y^s -v_N\|_{L^2(\S^1)}^2 &=  \frac{2}{2\pi}\sum_{k=1}^N \int_{I_k} |y(\theta) - y(\theta_k) |^2 \,d \theta \leq \|y^s\|_{BV}  \sum_{k=1}^N \overbrace{|I_k|}^{=\pi /N} |\mu_{y^s}|(I_k) \\
    &=\frac{ \pi \|y^s\|_{BV}  }{N} \sum_{k=1}^N |\mu_{y^s}|(I_k) =\frac{\pi \|y^s\|_{BV}  }{N} |\mu_{y^s}|([0,\pi)) \\
    &\leq  \frac{\pi ^2 \|y^s\|_{BV}^2  }{N}. 
\end{align*}
The inequality \cref{thm3_1} then follows from $\|y^s\|_{BV}^2  \leq \|y\|_{BV}^2 $. This completes the proof of the theorem.
\end{proof}

\section{The Localization theorem}
\label{sec:thm2}

The goal of this  section  is to prove \cref{thm2}. We begin with the observation that by \cref{thm4} and the identity $\sigma(w_i \cdot x) = \I \{\hat w_i \cdot x \geq 0 \} (w_i \cdot x)$, we may write every function $g \in \cHma$ as
\begin{align} \label{eq:g_repr}
    g(x) =  \sum_{i \in I} \I\{\hat w_i \cdot x \geq 0\}(v_i \cdot x),
	\end{align}
where $\hat w_i \neq \hat w_j$ with $i \neq j$. In particular, every function $g \in \cHma$ is locally linear, i.e., it is linear on arcs which we called sectors. Where the sectors are defined as the components where the function
\[
    x \mapsto \sum_{i \in I} \I\{\hat w_i \cdot x \geq 0\}
\]
is constant (see \cref{def:sector}). We recall also that we denote $S_i=  \{x \in \S^1 : \hat w_i \cdot x \geq 0 \}$ and that the the sectors, denoted by $\mathcal{S}$, belong to the $\sigma$-algebra generated by $S_i$.   

Since we have the characterization of the closure of $\Hma$ by \cref{thm4}, we may actually find the function which realizes the infimum in \cref{eq:l2approx}, i.e., there is $g \in \cHma$ which is of the form \cref{eq:g_repr} such that 
\begin{align} \label{infimum-again}
	\inf_{f_W \in \Hma}\|f_W - y\|_2^2 = \|g - y\|_2^2.
\end{align}

In the next lemma we show that the minimizer $g$ solves a kind of Euler-Lagrange equation, which will be useful later when we study the regularity properties of $g$. As we mentioned in the introduction we will turn these regularity estimates into information on the size of the vectors $v_i$ in \cref{eq:g_repr}. We stress that unlike the previous section, here we do not assume the function $y$ to be in the space $\Lal$ defined in \cref{L2al}. 

\begin{lemma} 
	Let $y \in L^2(\S^1)$. Then there exists a function $g \in \cHma$ of the form \cref{eq:g_repr} given by  \cref{lem:symasym} which  realizes the infimum  \cref{infimum-again}. Furthermore, if $g^s$ and $y^s$ are the symmetric part of $g$ and $y$ respectively, then for each sector $S \in \mathcal{S}$ associated with $\{\hat w_i\}_{i \in I}$ in  \cref{eq:g_repr} (see \cref{def:sector}) we have
	\begin{equation}
		\label{from euler}
	 	\int_{S}  g^s(x) \, x \, dx = \int_{S} y^s(x) \, x \, dx.
	\end{equation}
\end{lemma}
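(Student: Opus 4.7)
The statement has two parts: existence of a minimizer $g \in \cHma$ of the required form, and a sector-wise Euler--Lagrange identity.

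For existence I would take a minimizing sequence $f_{W_k}\in \Hma$ for \cref{infimum-again}. Since the cost is bounded (one can take $W=0$), the triangle inequality $\|f_{W_k}\|_2 \leq \|f_{W_k}-y\|_2 + \|y\|_2$ gives a uniform bound on $\|f_{W_k}\|_2$. If $|W_k|$ admits a bounded subsequence, a direct compactness argument inside $\Hma$ yields an $L^2$-limit $g\in \Hma\subset\cHma$. Otherwise $|W_k|\to\infty$, and \cref{lem:limit1} produces, along a further subsequence, an $L^2$-limit $g\in\cHma$ of the form \cref{eq:g_repr}. In either case, $L^2$-continuity of the cost ensures that $g$ realizes \cref{infimum-again}, and the explicit formula for $g^s$ follows from \cref{lem:symasym}.

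For the Euler--Lagrange identity I would fix the representation \cref{eq:g_repr} of $g$ and compute first-order optimality inside $\cHma$. The cleanest perturbation is $v_{i_0}\mapsto v_{i_0}+te$ for arbitrary $e\in\R^2$; this keeps the perturbed function in $\cHma$, and differentiating $\|g-y\|_2^2$ at $t=0$ gives the half-circle identity
\[
    \int_{S_{i_0}} (g - y)(x)\,x\,dx = 0 \quad \text{for every } i_0 \in I,
\]
where $S_{i_0}=\{x\in\S^1 : \hat w_{i_0}\cdot x \geq 0\}$. Decomposing $g-y$ into symmetric and antisymmetric parts I would exploit that $(g^s-y^s)x$ is antisymmetric (its integral over $\S^1$ vanishes) while $(g^a-y^a)x$ is symmetric (its integrals over $S_{i_0}$ and $-S_{i_0}=S_{i_0}^c$ coincide), together with the antipodal identity $V^s(-S)=-V^s(S)$ for $V^s(A):=\int_A(g^s-y^s)x\,dx$. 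Combined with the piecewise affine structure of $g^s$ on sectors given by \cref{lem:symasym}, I would turn the $|I|$ half-circle identities into a linear system for $(V^s(S))_{S\in\mathcal{S}}$ whose only solution is $V^s(S)=0$ for every sector $S$, which is exactly \cref{from euler}.

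The main obstacle is this final reduction from the half-circle conditions to the per-sector identity. It needs the linear map from the free weights $(v_i)$ to the family $(V^s(S))$ of sector integrals to have sufficient rank; in non-generic configurations --- coincident directions $\hat w_i=\hat w_j$, $v_{i_0}$ parallel to $\hat w_{i_0}$ (so the term is a pure ReLU), or fewer sectors than the generic $2|I|$ --- one likely supplements the $v$-perturbations with tangential variations of $\hat w_{i_0}$. Those produce boundary-concentrated variations at the points $\pm\hat w_{i_0}^\perp$ and yield pointwise conditions linking $(g^s-y^s)$ with $v_{i_0}$ at the sector boundaries, which are precisely what is needed to close the linear system.
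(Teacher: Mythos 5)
Your existence argument matches the paper's. The genuine departure is in the Euler--Lagrange step. Perturbing a single node, $v_{i_0}\mapsto v_{i_0}+te$, gives the half-circle identity $\int_{S_{i_0}}(g-y)\,x\,dx=0$, which constrains $g-y$ as a whole. The paper instead perturbs two nodes simultaneously, $v_i\mapsto v_i+tu$ and $v_j\mapsto v_j-tu$ (or $w_j\mapsto w_j-a_jtu$ on a ReLU node), so that $\sum_i v_i$ is unchanged. By \cref{lem:symasym} this leaves the anti-symmetric part of $g$ fixed, and the variation direction $(\I_{S_i}-\I_{S_j})(u\cdot x)$ is a symmetric function; the first-order condition therefore bears only on $g^s-y^s$, and combined with the antipodal anti-symmetry of $V^s(A):=\int_A(g^s-y^s)x\,dx$ it collapses to $V^s(S_{ij})=0$ directly. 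This coupled, anti-symmetry-preserving variation is the key idea that is absent from your proposal.

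\textbf{Why the single-node route does not close.} The obstruction you flag is not primarily a rank/degeneracy issue; it is structural. By your own observation $(g^a-y^a)x$ is symmetric, so $\int_{S_{i_0}}(g^a-y^a)x\,dx=\tfrac12\int_{\S^1}(g^a-y^a)x\,dx=:c$ is the same vector for every $i_0$ --- but there is no reason for $c$ to vanish. Hence your half-circle identities read $V^s(S_{i_0})=-c$ for all $i_0$, not $V^s(S_{i_0})=0$. Since each half-circle is a union of $n$ consecutive sectors and $V^s$ flips sign under the antipodal map, the resulting linear system forces $V^s$ to vanish on all but one antipodal pair of sectors, on which it equals $-c$. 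To reach \cref{from euler} you would additionally need $c=0$, i.e.\ that the linear part of $g$ coincides with the $L^2$-projection of $y$ onto linear functions. This does not follow from the half-circle identities, and it is not automatic from minimality, because the linear part of $g$ is the dependent quantity $\tfrac12\sum_i v_i$ and cannot in general be varied without simultaneously changing $g^s$ (there is no free linear direction inside $\cHma$ at a generic minimizer). Your proposed fix --- tangential variations of $\hat w_{i_0}$ --- yields boundary-concentrated pointwise conditions, not the global constraint $\int_{\S^1}(g^a-y^a)x\,dx=0$, so it does not plug this hole. The paper's two-node variation sidesteps the problem entirely by never introducing the anti-symmetric contamination in the first place.
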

\begin{proof}
    From \cref{thm4} we deduce that there is a function $g \in \cHma$ which realizes the infimum in \cref{infimum-again} and we may write it as 
	\begin{align*}
		g(x)=  \sum_{i \in J} \I\{\hat w_i \cdot x \geq 0\} (v_i \cdot x) + \sum_{i \in K} a_i \sigma(w_i \cdot x),
	\end{align*}
    where $\hat w_i \neq \hat w_j$ for $i \neq j$. Let us fix $i, j \in I$, with $i \neq j$, let $S_i = \{ \hat w_i \cdot x \geq 0 \}$ and $S_j= \{ \hat w_j \cdot x \geq 0 \}$   and denote the arcs 
    \[
        S_{ij} = S_i \setminus S_j \qquad \text{and} \qquad S_{ji} = S_j \setminus S_i.
    \] 
    Let us assume that $i \in J$ and $j \in K$. (The other cases follow from a similar argument). As in the proof of \cref{anti-symmetric} we decompose  norm as follows
	\begin{align*} 
		\|g -y\|_2^2 &= \|g^s - y^s\|_2^2+\|g^a -y^a\|_2^2,
	\end{align*}
	where $g^s$ and $g^a$ are the symmetric and the anti-symmetric part of $g$, and  $y^s,y^a$ are the symmetric and the  anti-symmetric part of y. Let us further fix a vector $u \in \R^2$. Since $g$ is a minimizer of $\|f - y \|_{2}^2$ in $\cHma$ we may consider the following variation of $g$ 
    \[
    \begin{split}
        g_{t}(x)= g(x) + t \, \I\{\hat w_i \cdot x \geq 0 \}(u\cdot x) + a_j(\sigma( (w_j - a_j t u) \cdot x)) - \sigma( w_j \cdot x)),
    \end{split}
    \]
    for all $t \in \R$. Then, we deduce by \cref{lem:symasym} that for the anti-symmetric part of $(g_t)^a$ we have
    \[
        (g_t)^a(x) =\frac12 \left( \sum_{k \in J\setminus \{i\}} v_k + (v_i + t u) + \sum_{k \in K\setminus \{j\}} a_k w_k + a_j(w_j- a_j tu)  \right)\cdot x = g^a(x),
    \]
    for all $t$. Therefore, we deduce by the above discussion that the following holds
    \begin{align} \label{eq:gs_var}
		\|g_t -y\|_2^2 &= \|g_t^s - y^s\|_2^2+\|g^a -y^a\|_2^2.
	\end{align}
    Now clearly $g_t \in \cHma$ and by the minimality of $g$ together with \cref{eq:gs_var} we get
	\[
		\frac{d}{dt}\bigg{|}_{t =0} \|g_t^s -y^s\|_2^2 =0. 
	\]
	By recalling the choice of $g_t$ and the form of the symmetric part $g^s$ by \cref{lem:symasym} we obtain 
	\begin{align*} 
		\int_{\S^1} \left ( \sgn(w_i \cdot x \geq 0 )-\sgn (w_j \cdot x \geq 0 ) \right ) (g^s(x) -y^s(x)) (x \cdot u)\, dx = 0.
	\end{align*}
	Using $S_{ij} = S_i \setminus S_j$ and $S_{ji} = S_j \setminus S_i$, the above can be rewritten as
	\begin{align*}
		\int_{S_{ij}}  (g^s(x) -y^s(x)) \, (x \cdot u) \, dx =  \int_{S_{ji}}   (g^s(x) -y^s(x)) \, (x \cdot u) \, dx.
	\end{align*}
	Since $g^s$ and $y^s$ are symmetric, then the functions $x \mapsto x \, g^s(x)$ and $x \mapsto x \, y^s(x)$ are anti-symmetric (component-wise). Hence, we may simplify the above equality as
	\[
		\int_{S_{ij}} (g^s(x) - y^s(x)) \, (x \cdot u) \, dx =   0.
	\]
	Since  this  holds for all $u \in \R^2$ we have  
	\begin{align} \label{eq:eulersect0}
	 	\int_{S_{ij}}  g^s(x) \, x \, dx = \int_{S_{ij}} y^s(x) \, x \, dx .
		\end{align}
	Again by the fact that $x \mapsto x \, g^s(x)$ and $x \mapsto x \, y^s(x)$ are anti-symmetric, we deduce that 
	\[
	    \int_{\S^1}  g^s(x) \, x \, dx =  \int_{\S^1} y^s(x) \, x \, dx = 0.
	\]
	Therefore we have by \cref{eq:eulersect0}  that 
	\[
	    \int_{\S^1 \setminus S_{ij}}  g^s(x) \, x \, dx = \int_{\S^1 \setminus S_{ij}} y^s(x) \, x \, dx .
	\]
	The claim follows from this and from \cref{eq:eulersect0} by using the symmetry of $y^s$ and $g^s$ and the fact that every sector is in the $\sigma$-algebra generated by the sets $S_i$.
\end{proof}

Let us consider the function $g$ which realizes the infimum in \cref{infimum-again} and write it as \cref{eq:g_repr} with index set $I$ and directions $\hat w_i$. By  \cref{def:sector} both  $g$ and  $g^s$ are  linear on the sectors $S \in \mathcal{S}$ associated with the directions $\{ \hat w_i\}$. Let us fix such a sector $S$ and write the symmetric part $g^s$ on $S$ as 
\begin{equation} \label{tilde_v} 
    g^s(x) = \frac{1}{\sqrt{m}} (\tilde{v}_S\cdot x).
\end{equation}

In the next lemma  we use the Euler-Lagrange equation \cref{from euler} to prove that the minimizer of  \cref{infimum-again}  inherits the regularity bounds from  the function $y$ when  $y$ is $C^1$-regular.

\begin{lemma} \label{lem:eulersecbound}
	Assume that $y \in C^1(\S^1)$  and  that  $g \in \cHma$ realizes the infimum in \cref{infimum-again} and let us write it as in \cref{eq:g_repr}. Let $S$ be any sector as in \cref{def:sector}, and let $\hat w_i$ be a vector such one of the boundary points of $S$ is on $\{x: \hat w_i \cdot x = 0\}$. Then the  symmetric part $g^s$ is linear on $S$ and the associated   vector $\tilde v_S$ (see \cref{tilde_v})  satisfies
	\[
		\frac{1}{\sqrt{m}} |\tilde v_S\cdot  w_i^\perp| \leq (4 + \pi^2) \|y\|_{\infty}
	\]
	and
	\[
		\frac{1}{\sqrt{m}} |\tilde v_S| \leq 3 \pi^2  \min \big{\{} \|y\|_{C^1}, \frac{ \|y\|_{\infty}}{|S|} \big{\}},
	\]
	where    $ w_i^\perp$ is unit vector   orthogonal to $\hat w_i$.
\end{lemma}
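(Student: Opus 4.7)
The plan is to exploit the observation that, by the Euler-Lagrange identity \cref{from euler} applied coordinate-wise, the linear function $g^s|_S$ is precisely the $L^2(S)$-orthogonal projection of $y^s|_S$ onto the two-dimensional subspace $V_S := \{v \cdot x : v \in \R^2\}$: indeed $\int_S (g^s - y^s) x \, dx = 0$ means $g^s - y^s \perp V_S$ in $L^2(S)$, while $g^s|_S \in V_S$ by \cref{tilde_v}. In particular
\[
    \frac{1}{m} \tilde v_S^\top A_S \tilde v_S = \|g^s\|_{L^2(S)}^2 \leq \|y^s\|_{L^2(S)}^2 \leq \|y\|_\infty^2 |S|,
\]
where $A_S := \int_S x \otimes x \, dx$. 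After rotating so that $w_i^\perp$ is the endpoint $\theta = 0$ of $S = [0, \alpha]$ with $\alpha = 2\pi |S|$, a direct computation diagonalises $A_S$ in the basis $\{e_c, e_c^\perp\}$ bisecting $S$, with eigenvalues $\lambda_1 = (\alpha + \sin\alpha)/(4\pi) \geq c|S|$ and $\lambda_2 = (\alpha - \sin\alpha)/(4\pi) \geq c|S|^3$ (the latter via the elementary inequality $\alpha - \sin\alpha \geq \alpha^3/(4\pi^2)$ on $(0, 2\pi]$). Writing $\tilde v_S = a\, e_c + b\, e_c^\perp$, the energy bound yields $|a|/\sqrt m \leq C \|y\|_\infty$ and $|b|/\sqrt m \leq C \|y\|_\infty / |S|$; the $1/|S|$ blow-up reflects the tangential degeneracy of $A_S$.

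For the first inequality, the hypothesis that $w_i^\perp$ lies at an endpoint of $S$ gives the expansion $w_i^\perp = \cos(\alpha/2)\, e_c \mp \sin(\alpha/2)\, e_c^\perp$, so $|\tilde v_S \cdot w_i^\perp| \leq |a| \cos(\alpha/2) + |b|\, |\sin(\alpha/2)|$. The crucial cancellation is that $|\sin(\alpha/2)| \leq \pi |S|$ exactly absorbs the $1/|S|$ factor in the bound on $b$: indeed $\sin^2(\alpha/2) \cdot |S|/\lambda_2 \leq 2\pi^2$ uniformly in $\alpha \in (0, 2\pi]$, which produces the asserted bound of the form $C \|y\|_\infty$. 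The $\|y\|_\infty / |S|$ branch of the second inequality is then immediate from $|\tilde v_S|^2 \leq a^2 + b^2$.

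The remaining $\|y\|_{C^1}$ branch, which is the Lipschitz estimate alluded to in the outline, is obtained by testing against the specific $\ell(x) = v^* \cdot x \in V_S$ chosen so that, in the $\theta$-parametrisation, $\ell(\theta_c) = y^s(\theta_c)$ and $\ell'(\theta_c) = (y^s)'(\theta_c)$ at the midpoint $\theta_c$ of $S$; then $|v^*|^2 = y^s(\theta_c)^2 + (y^s)'(\theta_c)^2 \leq \|y\|_{C^1}^2$. By the minimality of the $L^2$-projection onto $V_S$, $\|g^s - \ell\|_{L^2(S)} \leq 2 \|y^s - \ell\|_{L^2(S)}$; combining the matching at $\theta_c$ with the a priori bounds $|(y^s)'(\theta) - (y^s)'(\theta_c)| \leq 2\|y\|_{C^1}$ and $|\ell'(\theta) - \ell'(\theta_c)| \leq |v^*|\, |\theta - \theta_c|$ yields $\|y^s - \ell\|_{L^\infty(S)} \leq C \alpha \|y\|_{C^1}$ and hence $\|g^s - \ell\|_{L^2(S)} \leq C \alpha^{3/2} \|y\|_{C^1}$. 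Since $g^s - \ell \in V_S$ and $\lambda_{\min}(A_S) \geq c|S|^3$, this translates to $|\tilde v_S/\sqrt m - v^*| \leq C \|y\|_{C^1}$ uniformly in $|S|$, and the triangle inequality then closes the estimate. The main obstacle is precisely this $C^1$ branch: with only $C^1$ regularity the increment $(y^s)'(\theta) - (y^s)'(\theta_c)$ does not decay with $|\theta - \theta_c|$, forcing the use of the crude bound $\|(y^s)' - \ell'\|_\infty \leq C \|y\|_{C^1}$ throughout $S$, which is what produces the dimensional constant $3\pi^2$ in the stated inequality rather than something sharper.
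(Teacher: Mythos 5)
Your proof is correct, and it takes a genuinely different --- and somewhat cleaner --- route than the paper's. The key reformulation you use is that the Euler--Lagrange identity \cref{from euler} says exactly that $g^s|_S$ is the orthogonal $L^2(S)$-projection of $y^s|_S$ onto the two-dimensional subspace $V_S$ of linear restrictions; this lets you import, at a stroke, the energy contraction $\|g^s\|_{L^2(S)}\leq\|y^s\|_{L^2(S)}$ and the comparison $\|g^s-\ell\|_{L^2(S)}\leq\|y^s-\ell\|_{L^2(S)}$ for any competitor $\ell\in V_S$ (which in fact holds without your factor of $2$, since $g^s-\ell\in V_S$ is orthogonal to $g^s-y^s$). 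The paper never invokes the projection picture: it centers $S$ at its midpoint and reads off the two scalar components of \cref{from euler} directly, obtaining $|v_1|\leq 4\|y\|_\infty$ from the bisecting (stable) component, $|v_2|\leq 2\pi^2\|y\|_\infty/|S|$ from the tangential (degenerate) component, and the $\|y\|_{C^1}$ bound via a generalized mean value theorem applied to $\fint_S\big(y^s(\theta)-y^s(0)\big)\sin\theta\,d\theta$. Your eigenvalues $\lambda_1,\lambda_2$ of $A_S$ are precisely the integrals $\fint_S\cos^2\theta\,d\theta$, $\fint_S\sin^2\theta\,d\theta$ that the paper computes, so both methods diagonalize in the same midpoint basis; but the projection viewpoint makes the stable/degenerate dichotomy structural rather than computational, derives both componentwise bounds from a single inequality, gives somewhat sharper constants (Cauchy--Schwarz applied to $\lambda_1 a^2+\lambda_2 b^2\leq m\|y\|_\infty^2|S|$ in the direction $w_i^\perp$ already yields $|\tilde v_S\cdot w_i^\perp|/\sqrt m\leq 2\|y\|_\infty$), and replaces the slightly unusual generalized mean value theorem step by the standard device of comparing against the first-order Taylor polynomial of $y^s$ at the midpoint of $S$ together with the lower bound on the small eigenvalue $\lambda_2$.
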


\begin{proof}
	Let us fix a sector $S$ and choose the coordinates in $\R^2$ such that $S = \{ \tilde \sigma(\theta) \in \R^2 : - \theta_0 \leq \theta \leq \theta_0 \}$,  where
	\begin{align*}
		\tilde \sigma(\theta) =
		\begin{bmatrix}
					\cos \theta \\
					\sin \theta
		\end{bmatrix} .
	\end{align*}
	Then clearly  
	\begin{align} \label{eq:wperp}
	    w_i^\perp = 
	    \begin{bmatrix}
			\cos \theta_0  \\
			\pm \sin \theta_0
		\end{bmatrix}.
	\end{align}
	Denote also $v_1 := \frac{1}{\sqrt{m}} \tilde v_S \cdot e_1 $ and $v_2 := \frac{1}{\sqrt{m}} \tilde v_S \cdot e_2$. We need to estimate $|v_1|$ and $|v_2|$.

	We assume that the functions are written in polar coordinates and  calculate both sides  of \eqref{from euler}, which  reads as 
	\begin{equation}
	    \label{from_euler2}
	    \fint_{S} (\tilde{v}_S \cdot x)  \, x \, dx = \fint_{S} y^s(x) \, x \, dx.
	\end{equation}
	We estimate the first component of the  RHS of \eqref{from_euler2} simply  as
	\begin{align} \label{eq:ys_sector_bound}
		\big{|}\fint_{S} y^s(x) \, x_1 \, dx \big{|} \leq \|y^s\|_{\infty}
	\end{align}
	while the first component of the LHS of \cref{eq:ys_sector_bound} is
	\begin{align} \label{eq:vs_sector_bound}
		\notag \frac{1}{\sqrt{m}} \fint_{S}  (\tilde v_S \cdot x)\, x_1 \, dx  &= \fint_{-\theta_0}^{\theta_0} (v_1 \cos \theta + v_2 \sin \theta)\cos \theta \, d \theta \\
		&= v_1 \fint_{-\theta_0}^{\theta_0} \cos^2 \theta \, d \theta .
	\end{align}
	Since $\big{|} \fint_{-\theta_0}^{\theta_0} \cos^2 \theta \, d \theta \big{|} \geq \frac{1}{4}$ we deduce from \cref{eq:ys_sector_bound,eq:vs_sector_bound,from_euler2}  that
	\begin{equation}
	    \label{from_euler3}
		|v_1| \leq 4 \|y^s\|_{\infty} \leq 4 \|y\|_{\infty} .
	\end{equation}
	We continue by calculating the second component of the LSH of \cref{from_euler2} as
	\begin{equation} \label{lem: euler bound1}
		\begin{split}
			\fint_{S}  (\tilde v_S \cdot x)\, x_2 \, dx  &= \fint_{-\theta_0}^{\theta_0} (v_1 \cos \theta + v_2 \sin \theta)\sin \theta \, d \theta \\
			&= v_2 \fint_{-\theta_0}^{\theta_0} \sin^2 \theta \, d \theta = v_2 \frac{1}{2\theta_0}\left( \theta_0 - \frac12 \sin (2\theta_0) \right).
		\end{split}
	\end{equation}
	We estimate the second component of the RSH of \cref{from_euler2} first as
	\begin{align} \label{lem:euler_bound1+}
		\big{|}\fint_{S} y^s(x) \, x_2 \, dx \big{|} \leq \|y^s\|_{\infty}  \fint_{-\theta_0}^{\theta_0}  |\sin \theta| \, d \theta \leq \|y^s\|_{\infty}  \fint_{-\theta_0}^{\theta_0}  |\theta| \, d \theta \leq  \|y^s\|_{\infty} \frac{\theta_0}{2}.
	\end{align}
	For all $\theta_0 \in [0, \pi]$ we may bound
	\begin{equation} \label{lem: euler bound2}
		\theta_0 - \frac12 \sin (2\theta_0) \geq \frac{\theta_0^3}{\pi^2}
	\end{equation}
	this is easy to see, because the third derivative at $0$ of the left hand side is greater than the one on the right, and both sides are increasing in $[0,\pi]$ and equality holds at both end-points.
	Then \cref{from_euler2,lem: euler bound1,lem: euler bound2,lem:euler_bound1+} imply
	\begin{equation} \label{lem: euler bound3}
		|v_2| \leq  \frac{2 \pi^2 \|y^s\|_{\infty}}{2 \theta_0} =   \frac{2 \pi^2 \|y^s\|_{\infty}}{|S|} \leq \frac{2 \pi^2 \|y\|_{\infty}}{|S|} .
	\end{equation}
    On the other hand, we may estimate by the generalized mean value theorem that
	\begin{align} \label{eq:mvt}
		\notag \fint_{S} y^s(x) \, x_2 \, dx  &= \fint_{-\theta_0}^{\theta_0} y^s(\theta)\sin \theta \, d \theta \\
		\notag &=  \fint_{-\theta_0}^{\theta_0} (y^s(\theta) - y^s(0)) \sin \theta \, d \theta\\
		\notag &= \fint_{-\theta_0}^{\theta_0} \left(\frac{y^s(\theta) - y^s(0)}{\theta}\right) \theta \, \sin \theta \, d \theta \\
		\notag &= (y^s)'(\tilde \theta)  \fint_{-\theta_0}^{\theta_0}  \theta \, \sin \theta \, d \theta\\ 
		&=  (y^s)'(\tilde \theta)  \left ( \frac{\sin(\theta_0)}{\theta_0} - \cos(\theta_0)\right ),
	\end{align}
	for some $\tilde \theta \in (- \theta_0, \theta_0)$. Then by $|\sin \theta_0 - \theta_0 \cos \theta_0| \leq \frac{\theta_0^3}{3}$ for  all $\theta_0 \in [0, \pi]$ we deduce by  \cref{from_euler2,lem: euler bound1,lem: euler bound2,eq:mvt} that
	\[
		|v_2| \leq  \pi^2  \|y^s\|_{C^1} \leq  \pi^2  \|y\|_{C^1}.
	\]
	This yields the second claim of the lemma, since $\frac{1}{\sqrt{m}} |\tilde v_S| \leq \sqrt{2} \max\{|v_1|, |v_2| \}$ and $|S| \leq \pi$.
	The first claim of the lemma follows from \cref{eq:wperp,from_euler3,lem: euler bound3} as
	\begin{align*}
    	\frac{1}{\sqrt{m}} |\tilde v_S\cdot w_i^\perp|\leq |v_1| + \underbrace{|\sin (\theta_0)|}_{\leq |S|/2}|v_2| \leq 4 \|y\|_{\infty} + \pi^2\|y\|_{\infty}.
	\end{align*}
\end{proof}
The previous lemma implies that if $y \in C^1$ then the minimizer $g$ of \cref{infimum-again} is Lipschitz continuous and we may bound the norms $\|g\|_{\infty}$ and $\|g\|_{C^1}$ with  $\|y\|_{\infty}$ and $\|y\|_{C^1}$ respectively. Let us write $g$ as in \cref{eq:g_repr} in which case the symmetric part is by \cref{lem:symasym}
\begin{equation}
    \label{g_for_unbound}
g^s(x) = \frac{1}{2\sqrt{m}} \sum_{i \in I}\sgn (\hat w_i \cdot x \geq 0) (v_i \cdot x).
\end{equation}
Next we  use the bounds for the minimizer $g$  from  \cref{lem:eulersecbound} to  bound  the vectors $v_i$  in \cref{g_for_unbound}.

\begin{lemma} \label{lem:ubound}
	Let $g$ and $y$ be as in \cref{lem:eulersecbound} and let us write $g^s$ as in \cref{g_for_unbound}. Let us further rewrite the vectors $v_i$ in \cref{g_for_unbound} as $v_i = \alpha_i \hat w_i +  u_i$, where $u_i$ is orthogonal to  $\hat w_i$ and  $\alpha_i \in \R$. Then
	\begin{align*}
		\frac{1}{\sqrt{m}}|u_i| \leq 6\pi^2\|y\|_{\infty}.
	\end{align*}
    and
    \begin{align*}
		\frac{1}{\sqrt{m}} |\alpha_i| \leq 6\pi^2  \|y \|_{C^1}.
	\end{align*}
\end{lemma}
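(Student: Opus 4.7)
The plan is to obtain the bounds on $u_i$ and $\alpha_i$ by comparing the linear representatives of $g^s$ on two sectors that share a common boundary lying on $\{x : \hat w_i \cdot x = 0\}$, and then invoking \cref{lem:eulersecbound} applied to each of those two sectors.

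First, I would unpack the formula \cref{g_for_unbound} on each sector $S \in \mathcal{S}$. Since the sign $\sgn(\hat w_j \cdot x)$ is constant on $S$, taking values $\epsilon_j^S \in \{-1,+1\}$, the function $g^s$ is linear on $S$ with representative vector
\[
    \tilde v_S \;=\; \tfrac{1}{2} \sum_{j \in I} \epsilon_j^S \, v_j, \qquad g^s(x) = \tfrac{1}{\sqrt{m}}(\tilde v_S \cdot x) \text{ for } x \in S.
\]
Fix now an index $i \in I$ and consider two adjacent sectors $S,S' \in \mathcal{S}$ whose common boundary lies on the line $\{\hat w_i \cdot x = 0\}$. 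Since by \cref{lem:limit1} (and the representation \cref{eq:g_repr}) the directions $\hat w_j$ are pairwise distinct, as we cross from $S$ to $S'$ only the sign $\epsilon_i$ flips while all other signs $\epsilon_j^S = \epsilon_j^{S'}$ remain constant. Consequently
\[
    \tilde v_S - \tilde v_{S'} \;=\; \tfrac{1}{2}(\epsilon_i^S - \epsilon_i^{S'})\, v_i \;=\; \pm v_i.
\]

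Next, both $S$ and $S'$ satisfy the hypothesis of \cref{lem:eulersecbound} with respect to the direction $\hat w_i$ (each has one boundary point on $\{\hat w_i \cdot x = 0\}$). The first estimate of that lemma then gives
\[
    \tfrac{1}{\sqrt{m}} |\tilde v_S \cdot w_i^\perp| , \; \tfrac{1}{\sqrt{m}}|\tilde v_{S'} \cdot w_i^\perp| \;\le\; (4+\pi^2) \|y\|_{\infty}.
\]
Since $u_i$ is the component of $v_i$ orthogonal to $\hat w_i$, we have $|u_i| = |v_i \cdot w_i^\perp|$, so the triangle inequality applied to $v_i = \pm(\tilde v_S - \tilde v_{S'})$ yields
\[
    \tfrac{1}{\sqrt{m}}|u_i| \;\le\; 2(4+\pi^2)\|y\|_{\infty} \;\le\; 6\pi^2 \|y\|_{\infty},
\]
which is the first claim. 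For the second claim, the second estimate of \cref{lem:eulersecbound} (in its first form, using $\|y\|_{C^1}$) applied to both sectors gives $\tfrac{1}{\sqrt{m}}|\tilde v_S|,\tfrac{1}{\sqrt{m}}|\tilde v_{S'}| \le 3\pi^2 \|y\|_{C^1}$, hence
\[
    \tfrac{1}{\sqrt{m}}|\alpha_i| \;\le\; \tfrac{1}{\sqrt{m}}|v_i| \;\le\; 6\pi^2 \|y\|_{C^1}.
\]

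The only non-routine point in this argument is the initial observation that adjacent sectors differ by a single sign flip, which requires knowing the directions $\hat w_j$ are pairwise distinct and that a sector boundary is a single line $\{\hat w_i \cdot x = 0\}$; this is built into the representation used in \cref{eq:g_repr} and \cref{def:sector}. Once this is in place, the rest is just telescoping $v_i$ against the two linear representatives of $g^s$ and quoting the regularity bounds from \cref{lem:eulersecbound}.
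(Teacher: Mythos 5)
Your proof is correct, and for the first estimate it is genuinely simpler than the paper's. The paper also compares the linear representatives $\tilde v_{S_1},\tilde v_{S_2}$ of $g^s$ on two adjacent sectors sharing a boundary point on $\{\hat w_i\cdot x=0\}$, but to control $\tilde v_{S_2}\cdot\hat u_1$ it invokes \cref{lem:eulersecbound} for $S_2$ with the \emph{other} bounding direction $\hat w_2$, obtaining a bound on $\tilde v_{S_2}\cdot\hat u_2$, and then pays an extra error term $|\tilde v_{S_2}|\,|\hat u_1-\hat u_2|$ which it absorbs via the $|\tilde v_{S_2}|\le 3\pi^2\sqrt m\,\|y\|_\infty/|S_2|$ bound and $|\hat u_1-\hat u_2|\le |S_2|$. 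You observe instead that the shared endpoint lies on $\{\hat w_i\cdot x=0\}$ for \emph{both} sectors, so \cref{lem:eulersecbound} may be applied to $S$ and $S'$ with the same direction $\hat w_i$, giving $|\tilde v_S\cdot w_i^\perp|,|\tilde v_{S'}\cdot w_i^\perp|\le (4+\pi^2)\sqrt m\,\|y\|_\infty$ directly. Since $|u_i|=|v_i\cdot w_i^\perp|$ and $v_i=\pm(\tilde v_S-\tilde v_{S'})$, the triangle inequality gives $|u_i|/\sqrt m\le 2(4+\pi^2)\|y\|_\infty=(8+2\pi^2)\|y\|_\infty$, which is tighter than the paper's $(8+5\pi^2)\|y\|_\infty$, and both are below $6\pi^2\|y\|_\infty$. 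The $\alpha_i$ estimate is handled the same way in both arguments. One point shared by both proofs, which you state explicitly and the paper leaves implicit, is that crossing the boundary flips only the single sign $\sgn(\hat w_i\cdot x)$; this uses that no $\hat w_j$ with $j\ne i$ is antipodal to $\hat w_i$, which is part of the normalization of the representation \cref{eq:g_repr}.
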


Before the proof we note that the second inequality is a trivial consequence of \cref{lem:eulersecbound}. The difficulty is to obtain the first estimate, where we want the estimate to depend on the norm $\|y\|_{\infty}$ instead of $\|y \|_{C^1}$. 
\begin{proof}
    Let us fix a unit vector $\hat w_i$ in \cref{g_for_unbound} and by relabeling the nodes we call it $\hat w_1$. By rotating the coordinates we may assume that $\hat w_1 = e_2$. Let $S_1 \in \mathcal{S}$ be the sector of the form  $S_1 = \{x \in \S^1 :  0 \leq x_2 \leq \sin |S_{1}|, \, x_1 \geq 0 \}$ and let $S_2$ be the sector previous to $S_1$, in terms of the counterclockwise orientation. We write  $S_2 = \{x \in \S^1 : 0 \leq \hat w_2 \cdot x \leq \sin |S_{2}|, \, x_1 \geq 0 \} \subset \{ x_2 \leq 0\}$  for a unit vector $\hat w_2$. (We encourage the reader to draw a simple picture on the setting). Note that then $|S_2| \geq |\hat w_1 - \hat w_2|$. Let us write the function $g^s$ on sectors $S_1$ and $S_2$ as $g^s(x) = \frac{1}{\sqrt{m}} \tilde v_{S_i}(x)$, for $i = 1,2$,   as in \cref{tilde_v}. \cref{lem:eulersecbound} yields
	\begin{align} \label{eq:vs1bound}
		\frac{1}{\sqrt{m}}|\tilde v_{S_2}| \leq 3 \pi^2 \frac{ \|y\|_{\infty}}{|S_{2}|}.
	\end{align}
	
	By our construction we have $\sgn (\hat w_1 \cdot x) = 1$ for $x \in S_1$ and $\sgn (\hat w_1 \cdot x) =-1$ for $x \in S_2$. Therefore, by \cref{g_for_unbound} the vectors $\tilde v_{S_1}$ and $\tilde v_{S_2}$ differ only by  $\frac{1}{\sqrt{m}} v_1$, i.e., we have
	\begin{align} \label{eq:vs2bound}
		\frac{1}{\sqrt{m}} \tilde v_{S_1} - \frac{1}{\sqrt{m}} \tilde v_{S_2} = \frac{1}{\sqrt{m}} v_1 =   \frac{1}{\sqrt{m}}\alpha_1 \hat w_1 + \frac{1}{\sqrt{m}} u_1.
	\end{align}
	
	 Let us denote  $\hat u_1$ the unit vector such that $\hat u_1 \cdot u_1 = |u_1|$. (In fact since $\hat w_1 = e_2$ then $\hat u_1 = \pm e_1$.) In particular,  $\hat u_1$ is co-linear with $w_i^\perp$  and thus by \cref{lem:eulersecbound,eq:vs2bound} we have
	\begin{align} \label{eq:vs2dot1}
		 (4+\pi^2) \sqrt{m} \|y\|_{\infty}\geq |\tilde v_{S_1} \cdot \hat u_1 | = \big{|}\tilde v_{S_2} \cdot \hat u_1 + |u_1|\big{|}.
	\end{align}
	\cref{lem:eulersecbound} also yields 
		\begin{align} \label{eq:vs2dot2}
		|\tilde v_{S_2} \cdot \hat u_2 | \leq  (4+ \pi^2) \sqrt{m} \|y\|_{\infty},
		\end{align}
		where $\hat u_2$ is orthogonal to $\hat w_2$ and $\hat u_1 \cdot \hat u_2 >0$. Then, $|S_2| \geq |\hat w_2- \hat w_1| = |\hat u_2 - \hat u_1|$. Using this, \cref{eq:vs1bound,eq:vs2dot2} we obtain
    \begin{align*}
		|\tilde v_{S_2} \cdot \hat u_1| &\leq |\tilde v_{S_2} \cdot (\hat u_2 - \hat u_1)| + |\tilde v_{S_2} \cdot \hat u_2| \\
        &\leq \frac{3 \pi^2 \sqrt{m}  \|y\|_{\infty}}{|S_2|}|S_2| + (4 + \pi^2) \sqrt{m} \|y\|_{\infty}  \leq \sqrt{m} (4\pi^2 + 4)\|y\|_{\infty} .
    \end{align*}
	From the above and from \cref{eq:vs2dot1} we deduce that
	\begin{align*}
		\frac{1}{\sqrt{m}} |u_2| \leq (4+ \pi^2) \|y\|_{\infty} + (4\pi^2 + 4) \|y\|_{\infty} \leq
		6\pi^2 \|y\|_{\infty} .
	\end{align*}
    This yields  the first claim.

	For the second claim, we  use \cref{eq:vs2bound,lem:eulersecbound} (recall that $\hat w_1 \cdot u_1 = 0$) and get
    \[	
        \begin{split}
    		\frac{1}{\sqrt{m}} |\alpha_1| \leq \frac{1}{\sqrt{m}} |\tilde v_{S_2} | + \frac{1}{\sqrt{m}} |\tilde v_{S_1} |  \leq 6\pi^2  \|y\|_{C^1}.
        \end{split}
    \]
\end{proof}

We need yet two simple lemmas. In the first one we study the case when all the coefficients $a_i$ are positive.

\begin{lemma} \label{lemma easy1}
	Denote $W = (w_i)_{i=1}^m$. Then
	\[
		\fint_{\S^1} \left(  \frac{1}{\sqrt{m}}\sum_{i=1}^m \sigma(w_i \cdot x)\right)^2 \, dx \geq \frac{1}{4} \frac{|W|^2}{m}.
	\]
\end{lemma}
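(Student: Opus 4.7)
The plan is to exploit the fact that $\sigma \geq 0$, so that the cross terms when expanding the square are all non-negative and can simply be discarded. Concretely, I would expand
\[
\left(\sum_{i=1}^m \sigma(w_i \cdot x)\right)^2 = \sum_{i=1}^m \sigma(w_i \cdot x)^2 + 2\sum_{i < j} \sigma(w_i \cdot x)\,\sigma(w_j \cdot x),
\]
and, since every term is non-negative, bound the right-hand side from below by $\sum_i \sigma(w_i \cdot x)^2$. Integrating yields
\[
\fint_{\S^1}\left(\frac{1}{\sqrt{m}}\sum_{i=1}^m \sigma(w_i\cdot x)\right)^2 dx \geq \frac{1}{m}\sum_{i=1}^m \fint_{\S^1} \sigma(w_i\cdot x)^2\,dx.
\]

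Next I would compute the integral on the right for a single vector. Using homogeneity $\sigma(w_i\cdot x) = |w_i|\,\sigma(\hat w_i\cdot x)$ with $\hat w_i = w_i/|w_i|$ (the case $w_i = 0$ is trivial), and the rotational invariance of the Lebesgue measure on $\S^1$, I reduce to a universal constant:
\[
\fint_{\S^1} \sigma(w_i \cdot x)^2\, dx = |w_i|^2 \fint_{\S^1} \sigma(x_1)^2\, dx.
\]
Parametrising by $x = (\cos\theta,\sin\theta)$, the remaining one-dimensional integral is
\[
\fint_{\S^1}\sigma(x_1)^2\,dx = \frac{1}{2\pi}\int_{-\pi/2}^{\pi/2}\cos^2\theta\,d\theta = \frac{1}{4}.
\]

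Putting the two steps together gives the desired inequality
\[
\fint_{\S^1}\left(\frac{1}{\sqrt{m}}\sum_{i=1}^m \sigma(w_i\cdot x)\right)^2 dx \geq \frac{1}{m}\sum_{i=1}^m \frac{|w_i|^2}{4} = \frac{|W|^2}{4m}.
\]
There is no genuine obstacle here; the only point worth emphasizing is that the estimate is lossless up to the cross terms, which is exactly why positivity of the coefficients $a_i = 1$ is essential (and the lemma would fail for mixed-sign coefficients, since then massive cancellations are possible, as already witnessed by \cref{badexamp}).
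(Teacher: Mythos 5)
Your proof is correct and follows exactly the same route as the paper: discard the non-negative cross terms and evaluate $\fint_{\S^1}\sigma(w_i\cdot x)^2\,dx = \tfrac{1}{4}|w_i|^2$. The paper simply does this in one line without spelling out the rotational-invariance computation.
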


\begin{proof}
	\begin{align*}
		\fint_{\S^1} \left(  \frac{1}{\sqrt{m}}\sum_{i=1}^m \sigma(w_i \cdot x)\right)^2 dx \geq \frac{1}{m} \sum_{i=1}^m \fint_{\S^1} \sigma(w_i \cdot x)^2 dx = \frac{1}{m} \frac{1}{4} \sum_{i=1}^m |w_i|^2.
	\end{align*}
\end{proof}

The second simple result states that the minimum of the cost function in a ball is essentially independent of number of nodes, i.e, it only depends on the ratio between the number of nodes $m$ and the value $\underbar m = \min \{ |I_-|, |I_+|\}$.

\begin{lemma} \label{lemma easy2}
    Let $y \in L^2(\S^1)$ be as in the statement of \cref{thm2}. Assume that $m' < m$,  $ \underline m' = m'/2 \leq \underline m$ and denote $C(m) = \sqrt{m/\underbar m}$. Then for every $R>1$ we have
    \[
         \min_{\substack{f_W \in \Hma \\ |W| \leq C(m)R}}\| f_W - y \|_{2}^2 \leq  \min_{\substack{f_{W'} \in \mathcal{H}_{m',a'} \\ |W'| \leq R}}\| f_{W'} - y \|_{2}^2,
    \]
    where $\mathcal{H}_{m',a'}$ is defined in \cref{def:phiprim}.
\end{lemma}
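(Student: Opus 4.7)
The plan is to show the inequality by embedding each admissible competitor of the right-hand side into the feasible set of the left-hand side. Specifically, for every $f_{W'} \in \mathcal{H}_{m',a'}$ with $|W'| \leq R$, I will construct $f_W \in \Hma$ satisfying $f_W \equiv f_{W'}$ on $\S^1$ and $|W| \leq C(m) R$. Since $\|\,\cdot\, - y\|_2^2$ depends only on the function, taking the minimum on both sides of this identification immediately yields the claim.

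The key mechanism is the positive homogeneity of $\sigma$: for any $\lambda > 0$, $\sigma(\lambda w \cdot x) = \lambda \sigma(w \cdot x)$. I will represent each single term $\tfrac{a'_i}{\sqrt{m'}} \sigma(w'_i \cdot x)$ of $f_{W'}$ as a sum of $k$ identical replicas of $\tfrac{a'_i}{\sqrt{m}} \sigma(\lambda w'_i \cdot x)$, placed in $k$ slots of $\Hma$ whose coefficient $a_j$ equals $a'_i$. Matching the sums forces
\[
    \lambda \;=\; \frac{\sqrt{m}}{k \sqrt{m'}},
\]
and all remaining slots will be filled with zero weights, contributing nothing to $f_W$.

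I then choose $k := \lfloor \underbar m / \underbar m' \rfloor$, which is at least $1$ since $\underbar m' \leq \underbar m$ by the hypothesis. By the reordering in \cref{rem:reorder}, one has $|I_-| = \underbar m$ and $|I_+| = m - \underbar m \geq \underbar m$, so $\Hma$ does contain enough coefficient slots of each sign: exactly $k \underbar m' \leq \underbar m$ negative replicas are needed, and the same count on the positive side is automatic since $m - \underbar m \geq \underbar m$. By the homogeneity computation above, the constructed $f_W$ equals $f_{W'}$ pointwise.

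Finally, a direct calculation yields
\[
    |W|^2 \;=\; k \lambda^2 |W'|^2 \;=\; \frac{m}{k m'}\, |W'|^2.
\]
Using $m' = 2 \underbar m'$ together with the elementary bound $\lfloor x \rfloor \geq x/2$ valid for $x \geq 1$ (applied to $x = \underbar m/\underbar m'$), I obtain $k m' = 2 k \underbar m' \geq \underbar m$, hence $|W|^2 \leq (m/\underbar m) R^2 = C(m)^2 R^2$. There is no substantial obstacle here; the argument is a direct exploitation of ReLU homogeneity, and the only care required lies in the bookkeeping of coefficient slots and the elementary floor inequality.
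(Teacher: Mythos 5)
Your proof is correct and takes essentially the same approach as the paper: use ReLU homogeneity to replicate $W'$ with $k = \lfloor \underbar m/\underbar m'\rfloor = \lfloor 2\underbar m/m'\rfloor$ copies scaled by $\lambda = \sqrt{m}/(k\sqrt{m'})$, pad with zeros, then bound $|W|^2 = \frac{m}{km'}|W'|^2 \leq \frac{m}{\underbar m}R^2$ via the floor inequality. The only cosmetic difference is that you match sign slots explicitly, whereas the paper exploits the block-alternating ordering from \cref{rem:reorder} to place $k$ consecutive copies of $W'$; both are equivalent bookkeeping.
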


\begin{proof} 
    We begin by recalling that by \cref{rem:reorder} the first $2 \underbar m$ coefficients are alternating $a_1 = -a_2 = \dots$ and, in particular,  all the coefficients up to $m'$ are alternating. Let us fix $R>1$ and let $W' \in \R^{2m'}$ be such that $|W'| \leq R$ and  
    \begin{align*}
        \| f_{W'} - y \|_{2}^2 = \min_{\substack{f_{W'} \in \mathcal{H}_{m',a'} \\ |W'| \leq R}}\| f_{W'} - y \|_{2}^2.
    \end{align*}
    We construct the vector $W_\lambda \in \R^{2m}$ as follows
    \begin{align*}
        W_\lambda = \lambda(\underbrace{W',W',\ldots,W'}_{\in \R^{2km'}},\underbrace{0}_{\in \R^{2m-2km'}}) \in \R^{2m},
    \end{align*}
    where $k=\lfloor{2\underbar{m}/m'}\rfloor$. From the definition of $W_\lambda$ we get
    \[
        f_{W_\lambda}(x) = \frac{k}{\sqrt{m}} \sum_{i=1}^{m'} a_i \sigma( \lambda w'_i \cdot x) 
       = k \lambda \frac{\sqrt{m'}}{\sqrt{m}}  f_{W'}(x).
    \]
    From this it is clear that if we choose $\lambda =  k^{-1} \frac{\sqrt{m}}{\sqrt{m'}}$ we obtain $f_{W_\lambda} = f_{W'}$. 
    Furthermore, by  $k \geq \underbar m / m'$  we have 
    \[
        |W|^2 = k \lambda^2 |W'|^2 \leq \frac{m}{\underbar m} |W'|^2 \leq  \frac{m}{\underbar m} R^2
    \]
    and the claim follows.
\end{proof}

We are now ready to prove \cref{thm2} which we restate here for the convenience of the reader.

\thmlocalization*

 \begin{remark} \label{rem:local-re}
 	The dependence on $m$ is necessary. This is easy to see e.g. by choosing $y(x) = -\sigma(x_1)$, with only one negative coefficient $a_1 = -1$ and the rest positive $a_i =1$ for $i = 2, \dots m$. Then it is clear that 
 	\[
 	\min_{\substack{f_W \in \Hma \\ |W| \leq R}}\| f_W - y \|_{2}^2
 	\]
 	is attained by choosing $w_1 = R e_1$ and $w_i = 0$ for  $i = 2, \dots m$. In this case 
 	\[
 		\min_{\substack{f_W \in \Hma \\ |W| \leq R}}\| f_W - y \|_{2}^2 \simeq  (1- \frac{R}{\sqrt{m}})_+^2 
 	\]
 	while $\inf_{f_W \in \Hma}\| f_W - y \|_{2}^2 = 0$.
 \end{remark}

\begin{proof}[\textbf{Proof of \cref{thm2}}]\phantom{.}
    \subsection*{Setup}
	Let $R$ be as in the statement of the theorem. 
	Furthermore, recall that $I_+ = \{ i \in I : a_i = 1 \}$ and $I_- = \{ i \in I : a_i = -1 \}$ and we assume that $\underbar m = \min \{ |I_+|, |I_-| \} = |I_-|$. We  also split $y$ into two parts $y = y_1 + y_2$, where $y_1 \in \Lal$, $y_2 \in \Lal^\perp$ and $\Lal$ is defined in \cref{L2al} as the space of functions for which the anti-symmetric part is linear. 
	
	One of the main difficulties in the proof is due to the fact that the problem behaves  differently depending on the value of $\underbar m$. When $\underbar m$ is large we may approximate $y_1$ well by \cref{thm3}, while on the other hand, the complexity of the function space $\Hma$ is large, which may lead to high cost of localization (too large weights). We need to find a good balance between these two factors. A further difficulty is due to the fact that \cref{thm3} is not useful in the case when $\underbar m$ small.  As such, we divide the  proof into two parts, for small and big values of $\underbar m$. We may clearly assume that $\underbar m \geq 1$

    \subsection*{Small $\underbar m$:}
    Assume that  $\underbar m^9 \leq R$.
	
	Let  $r \in \N_+$ be such that  $r \leq R^{1/3} \leq 2r$. We approximate $y$ with a smooth function  $y_r$ given by \cref{lem:approximation} and consider the minimization problem
	\begin{align} \label{eq:phiR}
		 \inf_{f_W \in \Hma} \|f_W - y_r\|_2^2  .
	\end{align}
	Note that \cref{lem:approximation} implies
	\begin{align}\label{eq:yyr}
	    \|y- y_r\|_{2}^2\leq  \frac{16 \|y\|_{BV}^2}{r}  \leq  \frac{32 \|y\|_{BV}^2}{R^{1/3}}  \leq 1,
	\end{align}
    for all  $R \geq R_0$ provided that $R_0 \geq (10 \|y\|_{BV})^6$.  Therefore from $\|y\|_{L^2} \leq 1$ we have  $\|y_r\|_{L^2}\leq 2$ and  
    \[
        \min_{\substack{f_W \in \Hma \\  |W|\leq \rho}} \|f_W - y_r\|_2^2\leq 4 
    \]
    for all $\rho >0$. Furthermore, let us fix $\rho >0 $ and  $W_\rho \in \R^{2m}$ such that $|W_\rho| \leq \rho$ and  
    \[
        \|f_{W_\rho} - y_r\|_2^2 = \min_{\substack{f_W \in \Hma \\  |W|\leq \rho}} \|f_W - y_r\|_2^2.
    \]
    Then by the previous estimates we have
    \[
    \begin{split} 
        \min_{\substack{f_W \in \Hma \\  |W|\leq \rho}} \|f_W - y\|_2^2  - &\min_{\substack{f_W \in \Hma \\  |W|\leq \rho}} \|f_W - y_r\|_2^2 \leq \|f_{W_\rho }- y\|_{2}^2  - \|f_{W_\rho }- y_r\|_{2}^2 \\
        &\leq ( \|f_{W_\rho }- y\|_{2}  + \|f_{W_\rho }- y_r\|_{2})\|y- y_r\|_{2}\\
        &\leq  (2\underbrace{\|f_{W_\rho }- y_r\|_{2}}_{\leq 2} + \underbrace{\|y- y_r\|_{2}}_{\leq 1})\|y- y_r\|_{2}\\
        &\leq 5\|y- y_r\|_{2}.
    \end{split}
    \]
    By a similar argument we obtain
    \begin{equation}
        \label{eq:forallrho-}
        	\Big{|} \min_{\substack{f_W \in \Hma \\  |W|\leq \rho}} \|f_W - y\|_2^2 - \min_{\substack{f_W \in \Hma \\  |W|\leq \rho}} \|f_W - y_r\|_2^2 \Big{|} \leq 5\|y- y_r\|_{2},
    \end{equation}
    for all $\rho \geq R_0$, where $R_0$ is as in the statement of the theorem.

	To continue, let $g \in \cHma$ be the minimum of \cref{eq:phiR}. Then, by \cref{thm4} we may write $g$ as
	\[
		g(x) =\frac{1}{\sqrt{m}} \sum_{i \in J }   \I\{ \hat w_i \cdot x \geq 0\} (u_i \cdot x) +  \frac{1}{\sqrt{m}}\sum_{i \in J }  \alpha_i \sigma( \hat w_i \cdot x) +\frac{1}{\sqrt{m}} \sum_{i \in K}   a_i \sigma( w_i \cdot x),
	\]
	where $|J| \leq \underbar{m}$, $|K| \leq m-2 (|J|)$,  $K \cap J = \emptyset$ and $u_i$ is orthogonal to $\hat w_i$. 
	
	\subsubsection*{Construction of a local approximation}
	The proof now goes as follows, we construct a function $f_{W_h} \in \Hma$, for which there is a $h_0$ such that $W_{h_0} \in R^{2m}$, $|W_{h_0}| \leq C(m)R$ and  $f_{W_{h_0}}$ approximates $g$ well. To prove this we first construct a family of functions $(f_{W_h})_h$ and prove that $|W_{h_0}| \leq C(m) R$ (for the right choice of $h_0$). Then we prove that $f_{W_{h_0}}$ approximates $g$ well. We choose the family of functions $(f_{W_h})_h$ for $h >0$ as follows
	\begin{multline} \label{eq:fdef}
	    f_{W_h}(x) =\frac{1}{\sqrt{m}} \sum_{i \in J} \left( \sigma \left( (\frac{1}{h} \hat w_i + u_i) \cdot  x\right)  - \sigma \left( (\frac{1}{h}  -  \alpha_i) \hat w_i \cdot  x\right)  \right) \\
		+\frac{1}{\sqrt{m}} \sum_{i \in K} a_i \sigma(w_i \cdot  x),
	\end{multline}
	where $W_h = (\frac{1}{h} \hat w_i + u_i,\ldots, \left (\frac{1}{h}-\alpha_i \right ) \hat w_i,\ldots, w_i,\ldots)$. It is clear that $f_{W_h} \to g$ as $h \to 0$. In order to bound $\|f_{W_{h_0}} - g\|_2$ it suffices to bound $\frac{d}{dh} \|f_{W_h} - g\|_2$ for $h \leq h_0$. 
	
	Note that \cref{lem:ubound} gives a bound for the components of $W_h$, which leads to a bound that depends on $m$.  In order to have an estimate independent of $m$ we need a more refined argument where we use the $L^2$ bound on $g$.
	
	To this end, let us split $g = g_+ + g_-$ where
	\[
        g_+(x) :=  \frac{1}{\sqrt{m}}  \sum_{K \cap I_+} \sigma(w_i \cdot x)
    \]
    and
    \[
        g_-(x) := \frac{1}{\sqrt{m}} \sum_{i \in J }   \I\{ \hat w_i \cdot x \geq 0\} (u_i \cdot x) +  \frac{1}{\sqrt{m}}\sum_{i \in J }  \alpha_i \sigma( \hat w_i \cdot x) -\frac{1}{\sqrt{m}} \sum_{i \in I_- \cap K}   \sigma( w_i \cdot x).
    \]
	We use $\|y_r\|_{L^2}\leq 2$ and the fact that $g$ is the minimizer of \eqref{eq:phiR}  to obtain $\|g\|_{L^2} \leq 4$. Then, 
	\begin{align*}
		\|g_+\|_2 \leq  \|g\|_2 + \|g_-\|_2 \leq 4 + \|g_-\|_2 .
	\end{align*}
	We are left to estimate the $L^2$ norm of $g_-$.  \cref{lem:approximation}  yields   $\|y_r\|_{C^1} \leq 5r \|y\|_{BV}  \leq 5 R^{1/3} \|y\|_{BV} $ and $\|y_r\|_{\infty} \leq \|y\|_{\infty} \leq \|y\|_{BV} $. Therefore  we have by  \cref{lem:ubound}
	that
	\begin{align}
		\label{eq:u_ibound}
		\frac{1}{\sqrt{m}}|u_i|\leq 6 \pi^2   \|y\|_{BV} 
	\end{align}
	and
	\begin{align}
		\label{eq:alphaibound}
		\frac{|\alpha_i|}{\sqrt{m}},\frac{|w_i|}{\sqrt{m}} \leq 30 \pi^2 R^{1/3}  \|y\|_{BV} 
	\end{align}
	for all $i \in J \cup K$. We  use \eqref{eq:u_ibound} and $|J| \leq \underbar m$ to estimate the first term in $g_-$ as
	\begin{align*}
	    \frac{1}{\sqrt{m}}	\left \| \sum_{i \in J }   \I\{ \hat w_i \cdot x \geq 0\} (u_i \cdot x) \right \|_2 \leq  6 \pi^2   \|y\|_{BV}  \underbar m
	\end{align*}
	and  \eqref{eq:alphaibound} and $|I_-| \leq \underbar m$ to estimate the two latter  terms 
	\begin{align*}
	    \frac{1}{\sqrt{m}}\left \| \sum_{i \in K \cap I_- } \sigma( w_i \cdot x) \right \|_2,\frac{1}{\sqrt{m}} \left \| \sum_{i \in J }  \alpha_i \sigma( \hat w_i \cdot x) \right \|_2 \leq 30 \pi^2  \|y\|_{BV} R^{1/3}\underbar{m}.
	\end{align*}
	Therefore by combining the two estimates we obtain 
	\begin{align*}
		\|g_-\|_2  \leq (6 \pi^2 + 60 \pi^2 R^{1/3})  \|y\|_{BV}    \underbar m
	\end{align*}
	which in turn yields
    \begin{align*}
		\|g_+\|_2 \leq 4 + \|g_-\|_2  \leq 4 +  (6 \pi^2 + 60 \pi^2  R^{1/3})  \|y\|_{BV}   \underbar m .
	\end{align*}
    
    We use the above inequality and  \cref{lemma easy1} to  obtain  
	\begin{align*}
	\left(\frac{1}{m} \frac{1}{4} \sum_{i \in K \cap I_+} |w_i|^2 \right)^{1/2} \leq 4 +  (6 \pi^2 + 60 \pi^2R^{1/3} )  \|y\|_{BV}    \underbar m
	\end{align*}
	which implies
	\begin{align} \label{eq:wi+bound}
		\left(\sum_{i \in K \cap I_+} |w_i|^2 \right)^{1/2} \leq 8 \sqrt{m} + 2 (6 \pi^2 + 60 \pi^2R^{1/3} )  \|y\|_{BV} \underbar m \sqrt{m}.
	\end{align}
	Recalling the definition of $W_h$ and using \cref{eq:u_ibound,eq:alphaibound,eq:wi+bound,eq:fdef} and the triangle inequality we have  the bound
	\begin{align*}
		\notag |W_{h}| &\leq \left(\frac{4\underbar{m}}{h^2} +  2 \left (  6^2\pi^4  +  30^2 \pi^4 R^{2/3}\right )\|y\|_{BV}^2\, m\underbar{m} + \sum_{i \in K \cap I_+} |w_i|^2 \right)^{1/2} \\
		\notag &\leq \frac{2 \sqrt{\underbar m}}{h} +  4 \left ( 6\pi^2  + 42 \pi^2 R^{1/3}\right )\|y\|_{BV} \,  \underbar m \sqrt{m} + 8 \sqrt{m}. 
	\end{align*}
	Recall that $1 \leq \underbar m^9 \leq R$. Then choosing $h_0 = R^{-1/2} (\underbar m \, m)^{-1/2}$  we obtain for $h = h_0$, $R \geq R_0 \geq \max \{(10\|y\|_{BV})^6, 4 \cdot 10^7\}$ that
	\begin{align} \label{eq:Wh0bound}
	    \notag |W_{h_0}| &\leq 2 \underbar m \sqrt{m} \sqrt{R} +  170 \pi^2 R^{1/3} \|y\|_{BV} \,  \underbar m \sqrt{m} + 8 \sqrt{m} \\
	    \notag &\leq (2 \sqrt{R} + 170 \pi^2 R^{1/3} \|y\|_{BV}+ 8) \left( \frac{m}{\underbar m}\right)^{1/2} \underbar m^{3/2} \\
	    \notag &\leq ( 3  +  17 \pi^2) C(m) \, \sqrt{R} {\underbar m}^{3/2} \\
	    \notag &\leq ( 3  +  17 \pi^2) C(m) \, R^{2/3} \\
	    &\leq \frac{1}{2} C(m) \, R.
	\end{align}
	
	Now that we have proved that our proposed approximation $f_{W_{h_0}}$ has a weight $W_{h_0} \in \R^{2m}$ which satisfies $|W_{h_0}| \leq C(m)R$. Next, let us prove that $f_{W_{h_0}}$ approximates $g$ well. To this end let us denote
	\[
		F_{h}(x) :=   \frac{1}{\sqrt{m}} \sum_{i \in J} \frac{\sigma( (\hat w_i + h u_i) \cdot  x)  - \sigma( \hat w_i \cdot  x)}{h},
	\]
	and
	\begin{align*}
        G (x) := \frac{1}{\sqrt{m}} \sum_{i \in J} \I\{\hat w_i \cdot x \geq 0\} (u_i \cdot x).
	\end{align*}
	Then by \cref{eq:fdef} and by recalling the form of $g$ we have $f_{W_h} - g = F_h - G$ and as such $F_h \to G$ as $h \to 0$. As mentioned previously in the proof, we wish to bound $\| F_{h} - G \|_2$ by estimating $\frac{d}{dh} \| F_{h} - G \|_2$ for $0 < h < h_0$ and then integrating up to $h_0$. By Cauchy-Schwarz inequality we get
	\begin{align} \label{eq:ddhbound}
	    \frac{d}{dh} \| F_{h} - G \|_2 \leq  \| \frac{d}{dh}F_{h}\|_2.
	\end{align}
	Therefore we may estimate
    \begin{align} \label{eq:ddhbound1}
        \left \|\frac{d}{dh} F_{h} \right \|_2 &= \frac{1}{\sqrt{m} \, h^2} I_1
    \end{align}
    where we bound $I_1$ as
    \begin{align} \label{eq:ddhbound1+}
        \notag I_1 &= \Big{\|} \sum_{i \in J} h \sigma'((\hat w_i + h u_i) \cdot x) u_i \cdot x - \sigma((\hat w_i+hu_i)\cdot x) + \sigma( \hat w_i \cdot  x) \Big{\|}_2\\
        &\leq  \sum_{i \in J} \Big{\|} h \sigma'((\hat w_i + h u_i) \cdot x) u_i \cdot x - \sigma((\hat w_i+hu_i)\cdot x) + \sigma( \hat w_i \cdot  x) \Big{\|}_2 \\
        \notag &=   h \sum_{i \in J}  \Big{\|} \fint_{0}^h   [\sigma'((\hat w_i + h u_i) \cdot x)   -  \sigma'((\hat w_i + \tau u_i) \cdot x)  ] (u_i \cdot x) d\tau\Big{\|}_2 \\
        \notag &\leq h \sum_{i \in J} \left( \fint_{0}^h |u_i|^2   \fint_{\S^1}(\sigma'((\hat w_i + h u_i) \cdot x)   -  \sigma'((\hat w_i + \tau u_i) \cdot x)^2 \, dx  d\tau \right)^{1/2}.
    \end{align}
	Recalling that $\sigma'(t) = \I\{t \geq 0\}$ we get from \cref{eq:ddhbound,eq:ddhbound1,eq:ddhbound1+} that
	\begin{align} \label{eq:ddhbound2}
		&\frac{d}{dh} \| F_{h} - G\|_2 \leq  \frac{1}{\sqrt{m} \,  h} \sum_{i \in J}  |u_i| \left( \fint_{0}^h  I_{2,i}(h,\tau) d\tau \right)^{1/2},
	\end{align}
	where
	\begin{align} \label{eq:ddhbound2+}
	    I_{2,i}(h,\tau) = \fint_{\S^1} \Big{|}\I\{(\hat w_i + h u_i) \cdot x \geq 0\}   - \I\{(\hat w_i + \tau u_i) \cdot x \geq 0\}\Big{|} \, dx.
	\end{align}
	From a geometric consideration it is clear that
	\begin{align} \label{eq:I2ibound}
	    I_{2,i}(h,\tau) \leq h|u_i|,
	\end{align}
	for all $\tau \in (0,h)$. Therefore we deduce from \cref{eq:ddhbound2,eq:ddhbound2+,eq:I2ibound,eq:u_ibound} that
	\begin{align} \label{eq:ddhbound3}
		\frac{d}{dh} \| F_{h} - G\|_2\leq  \frac{1}{\sqrt{m \, h} }  \sum_{i \in J}  |u_i|^{3/2}  \leq   15 \pi^3 \|y\|_{BV}^{3/2}  \underbar m \, m^{1/4} \frac{1}{\sqrt{h} }.  
	\end{align}
	We integrate \cref{eq:ddhbound3} over $(0, h_0)$, recalling that $\lim_{h \to 0} F_{h} = G$ and recalling our choice $h_0 = R^{-1/2} (\underbar m \, m)^{-1/2}$ we have for $\underbar m^9 \leq R$ and $R \geq R_0 \geq (10 \|y\|_{Bv})^6 $ that 
	\begin{align} \label{eq:bound approxim}
		\notag \| f_{W_{h_0}} - g \|_2 &=  \| F_{h} - G \|_2    \leq 30 \pi^3 \|y\|_{BV}^{3/2} \underbar m \, m^{1/4} \sqrt{h_0} \\
		\notag &\leq 30 \pi^3 \|y\|_{BV}^{3/2} \underbar{m}^{3/4} R^{-1/4} \\
		\notag &\leq 30 \pi^3 \|y\|_{BV}^{3/2} R^{-1/6} \\
		&\leq  94  \|y\|_{BV}^{1/2}.
	\end{align}
	To sum up, we have established \cref{eq:Wh0bound,eq:bound approxim}, which tells us that we have found a good local approximation $f_{W_{h_0}}$ of $g$ such that $\|W_{h_0}\| \leq C(m)R$.
	
	\subsubsection*{Putting everything together}
	\newcommand{\HmaR}{\mathcal{H}_{m,a}^{R_1}}
	Let us denote $\mathcal{H}_{m,a}^R  := \{f_W: f_W \in \Hma, \|W\| \leq R\}$ and $R_1 = C(m)R/2$. From \cref{eq:Wh0bound,eq:fdef,eq:bound approxim} we obtain that 
    \begin{align} \label{eq:localizationr}
        \notag \min_{\HmaR}\|f_W - y_r\|_2^2  - \inf_{\Hma}\|f_W - y_r\|_2^2 
        &\leq \|f_{W_{h_0}} - y_r \|_2^2  - \|g- y_r \|_2^2\\
        \notag &\leq ( \|f_{W_{h_0}} - y_r \|_2  + \|g- y_r \|_2) \, \|f_{W_{h_0}} - g \|_2\\
        \notag &\leq ( \|f_{W_{h_0}} - g \|_2+ 2\underbrace{\|g- y_r \|_2}_{\leq 2}) \, \|f_{W_{h_0}} - g \|_2\\
        \notag &\leq 30 \pi^3 \|y\|_{BV}^{3/2} ( 94   \|y\|_{BV}^{1/2}+4) \, R^{-1/6}\\
        &\leq 10^5 (\|y\|_{BV}^{2}+1)\, R^{-1/6}.
    \end{align}
    Finally we write 
    \begin{align} \label{eq:half done-}
	    \notag \min_{\HmaR} \|f_W - y\|_2^2  -\inf_{\Hma} \|f_W - y\|_2^2 =& \min_{\HmaR}\|f_W - y\|_2^2 - \min_{\HmaR}\|f_W - y_r\|_2^2 \\
	    \notag &+ \min_{\HmaR}\|f_W - y_r\|_2^2  - \inf_{\Hma}\|f_W - y_r\|_2^2 \\
	    \notag &+ \inf_{\Hma} \|f_W - y_r\|_2^2 - \inf_{\Hma} \|f_W - y\|_2^2 \\
	    =:& D_1 + D_2 + D_3
 	\end{align}
 	and thus by \cref{eq:forallrho-,eq:localizationr,eq:yyr} we have
 	\begin{align} \label{eq:half done+}
 	    D_1+D_2+D_3 \notag \leq& 10 \|y-y_r\|_2 +  10^5 (\|y\|_{BV}^{2}+1)\, R^{-1/6} \\
	    \notag \leq& \left (60\|y\|_{BV} + 10^5 (\|y\|_{BV}^{2}+1) \right )\, R^{-1/6} \\
	    \leq& 4 \cdot 10^4 (\|y\|_{BV}^{2}+1)\, R^{-1/9},
 	\end{align}
    for all $R \geq R_0$ (which implies $R^{1/18} > 2.6$). 
    Assembling \cref{eq:half done-,eq:half done+} we get
    \begin{align} \label{eq:half done}
        \min_{\HmaR} \|f_W - y\|_2^2  -\inf_{\Hma} \|f_W - y\|_2^2 \leq 4 \cdot 10^4 (\|y\|_{BV}^{2}+1)\, R^{-1/9},
    \end{align}
    for all $R \geq R_0$, which concludes the proof in the "small $\underbar m$" case, i.e. $\underbar m^9 \leq R$.
	
	\subsection*{Big $\underbar m$}
    Assume that $\underbar m^9 > R$. 
    
    Let us split $y$ into two parts, $y = y_1 + y_2$ where $y_1 \in \Lal$ and $y_2 \in \Lal^\perp$.
	Then by \cref{anti-symmetric} we have
    \begin{align} \label{eq:symasym_split}
        \min_{\substack{f_W \in \Hma \\  |W|\leq \rho}}\|f_W - y\|_2^2 &= \min_{\substack{f_W \in \Hma \\  |W|\leq \rho}}\|f_W - y_1\|_2^2 + \|y_2\|_2^2,
    \end{align}
    for all $\rho>0$. Now choose $m' \leq 2\underbar m$ as an even number, such that $R^{1/9} \geq m'/2 \geq R^{1/9}/2$, and let us consider the minimization problem in $\mathcal{H}_{m',a'}$ (see \cref{def:phiprim}). Since $\underbar m' \leq R^{1/9}$ and  $C(m') = \sqrt{2}$ we may use
    \cref{eq:half done} to obtain
    \begin{multline} \label{eq:restricted_localization1}
        \min_{\substack{f_{W'} \in \mathcal{H}_{m',a'} \\ |W'| \leq R}}\| f_{W'} - y \|_{2}^2  \leq  \inf_{f_{W'} \in \mathcal{H}_{m',a'} }\| f_{W'} - y \|_{2}^2 +  4 \cdot 10^4 (\|y\|_{BV}^{2}+1) R^{-1/9}.
    \end{multline}
    To bound the first term on the right hand side of \cref{eq:restricted_localization1} we use, $\underbar m' = m'/2 \geq R^{1/9}/2$, \cref{thm3,lem:lal} to get
    \begin{align} \label{eq:restricted_localization2}
        \inf_{f_{W'} \in \mathcal{H}_{m',a'} }  \|f_{W'} -y_1\|_2^2   \leq \frac{62 \|y_1\|_{BV}^2}{\underbar m'} \leq 2000 \|y\|_{BV}^2 R^{-1/9}.
    \end{align}
    Hence, from \cref{eq:symasym_split,eq:restricted_localization1,eq:restricted_localization2} we obtain the localization result for our restricted space $\mathcal{H}_{m',a'}$,
    \begin{align} \label{eq:restricted_localization3}
        \min_{\substack{f_{W'} \in \mathcal{H}_{m',a'} \\ |W'| \leq R}}\| f_{W'} - y \|_{2}^2  \leq  \|y_2\|_2^2 +   5 \cdot 10^4 (\|y\|_{BV}^{2}+1) R^{-1/9}.
    \end{align}
    To carry over the result to $\Hma$, we use \cref{lemma easy2} to get
    \begin{align} \label{eq:full_localization}
        \min_{\substack{f_{W} \in \Hma \\ |W| \leq C(m)R}}\| f_{W} - y \|_{2}^2 \leq  \min_{\substack{f_{W'} \in \mathcal{H}_{m',a'} \\ |W'| \leq R}}\| f_{W'} - y \|_{2}^2
    \end{align}
    and from the obvious fact that   
    \begin{align*}
        \inf_{f_{W} \in \Hma}\| f_{W} - y \|_{2}^2 \geq \|y_2\|_2^2.
    \end{align*}
    The estimate \cref{eq:full_localization} finishes the proof of \cref{thm2}.
    \end{proof}

\section*{Acknowledgments}
The first author was supported by the Swedish Research Council grant dnr: 2019-04098. The second author was supported by the Academy of Finland grant 314227.

\newpage

\bibliographystyle{acm}
\bibliography{references}

\begin{thebibliography}{10}

\bibitem{Aida}
{\sc Aida, S.}
\newblock Uniform positivity improving property, {S}obolev inequalities, and
  spectral gaps.
\newblock {\em J. Funct. Anal. 158}, 1 (1998), 152--185.

\bibitem{Aida2}
{\sc Aida, S., and Shigekawa, I.}
\newblock Logarithmic {S}obolev inequalities and spectral gaps: perturbation
  theory.
\newblock {\em J. Funct. Anal. 126}, 2 (1994), 448--475.

\bibitem{AFP}
{\sc Ambrosio, L., Fusco, N., and Pallara, D.}
\newblock {\em Functions of bounded variation and free discontinuity problems}.
\newblock Oxford Mathematical Monographs. The Clarendon Press, Oxford
  University Press, New York, 2000.

\bibitem{Arora}
{\sc Arora, S., Du, S., Hu, W., Li, Z., and Wang, R.}
\newblock Fine-grained analysis of optimization and generalization for
  overparameterized two-layer neural networks.
\newblock In {\em International Conference on Machine Learning\/} (2019),
  pp.~322--332.

\bibitem{AN}
{\sc Avelin, B., and Nystr\"{o}m, K.}
\newblock {Neural ODEs as the deep limit of ResNets with constant weights}.
\newblock {\em Analysis and Applications, 10.1142/S0219530520400023\/} (2020).

\bibitem{BBCG}
{\sc Bakry, D., Barthe, F., Cattiaux, P., Guillin, A., et~al.}
\newblock A simple proof of the poincar{\'e} inequality for a large class of
  probability measures.
\newblock {\em Electronic Communications in Probability 13\/} (2008), 60--66.

\bibitem{BCG}
{\sc Bakry, D., Cattiaux, P., and Guillin, A.}
\newblock Rate of convergence for ergodic continuous markov processes: Lyapunov
  versus poincar{\'e}.
\newblock {\em Journal of Functional Analysis 254}, 3 (2008), 727--759.

\bibitem{Barron}
{\sc Barron, A.~R.}
\newblock Universal approximation bounds for superpositions of a sigmoidal
  function.
\newblock {\em IEEE Transactions on Information theory 39}, 3 (1993), 930--945.

\bibitem{Bobkov}
{\sc Bobkov, S.~G.}
\newblock Isoperimetric and analytic inequalities for log-concave probability
  measures.
\newblock {\em Ann. Probab. 27}, 4 (1999), 1903--1921.

\bibitem{bolcskei2019optimal}
{\sc Bolcskei, H., Grohs, P., Kutyniok, G., and Petersen, P.}
\newblock Optimal approximation with sparsely connected deep neural networks.
\newblock {\em SIAM Journal on Mathematics of Data Science 1}, 1 (2019), 8--45.

\bibitem{Cattiaux}
{\sc Cattiaux, P.}
\newblock Hypercontractivity for perturbed diffusion semigroups.
\newblock {\em Ann. Fac. Sci. Toulouse Math. (6) 14}, 4 (2005), 609--628.

\bibitem{CG}
{\sc Cattiaux, P., and Guillin, A.}
\newblock Semi log-concave {M}arkov diffusions.
\newblock In {\em S\'{e}minaire de {P}robabilit\'{e}s {XLVI}}, vol.~2123 of
  {\em Lecture Notes in Math.} Springer, Cham, 2014, pp.~231--292.

\bibitem{CF}
{\sc Coifman, R.~R., and Fefferman, C.}
\newblock Weighted norm inequalities for maximal functions and singular
  integrals.
\newblock {\em Studia Math. 51\/} (1974), 241--250.

\bibitem{Cybenko}
{\sc Cybenko, G.}
\newblock Approximation by superpositions of a sigmoidal function.
\newblock {\em Mathematics of control, signals and systems 2}, 4 (1989),
  303--314.

\bibitem{DPF}
{\sc De~Philippis, G., and Figalli, A.}
\newblock Rigidity and stability of {C}affarelli's log-concave perturbation
  theorem.
\newblock {\em Nonlinear Anal. 154\/} (2017), 59--70.

\bibitem{DuZhai}
{\sc Du, S.~S., Zhai, X., Poczos, B., and Singh, A.}
\newblock Gradient descent provably optimizes over-parameterized neural
  networks.
\newblock In {\em International Conference on Learning Representations\/}
  (2019).

\bibitem{fefferman1994reconstructing}
{\sc Fefferman, C.}
\newblock Reconstructing a neural net from its output.
\newblock {\em Revista Matem{\'a}tica Iberoamericana 10}, 3 (1994), 507--555.

\bibitem{fefferman1993recovering}
{\sc Fefferman, C., and Markel, S.}
\newblock Recovering a feed-forward net from its output.
\newblock In {\em Proceedings of the 6th International Conference on Neural
  Information Processing Systems\/} (1993), pp.~335--342.

\bibitem{GR}
{\sc Garc\'{\i}a-Cuerva, J., and Rubio~de Francia, J.~L.}
\newblock {\em Weighted norm inequalities and related topics}, vol.~116 of {\em
  North-Holland Mathematics Studies}.
\newblock North-Holland Publishing Co., Amsterdam, 1985.
\newblock Notas de Matem\'{a}tica [Mathematical Notes], 104.

\bibitem{Gross}
{\sc Gross, L.}
\newblock Logarithmic {S}obolev inequalities.
\newblock {\em Amer. J. Math. 97}, 4 (1975), 1061--1083.

\bibitem{Hanin}
{\sc Hanin, B.}
\newblock Universal function approximation by deep neural nets with bounded
  width and {ReLU} activations.
\newblock {\em Mathematics 7}, 10 (2019), 992.

\bibitem{hecht1987kolmogorov}
{\sc Hecht-Nielsen, R.}
\newblock Kolmogorov's mapping neural network existence theorem.
\newblock In {\em Proceedings of the international conference on Neural
  Networks\/} (1987), vol.~3, IEEE Press New York, pp.~11--14.

\bibitem{heinecke2020refinement}
{\sc Heinecke, A., Ho, J., and Hwang, W.-L.}
\newblock Refinement and universal approximation via sparsely connected relu
  convolution nets.
\newblock {\em IEEE Signal Processing Letters 27\/} (2020), 1175--1179.

\bibitem{Sjostrand}
{\sc H{\'e}rau, F., Hitrik, M., and Sj{\"o}strand, J.}
\newblock Tunnel effect and symmetries for kramers-fokker-planck type
  operators.
\newblock {\em Journal of the Institute of Mathematics of Jussieu 10}, 3
  (2011), 567.

\bibitem{HH}
{\sc Higham, C.~F., and Higham, D.~J.}
\newblock Deep learning: An introduction for applied mathematicians.
\newblock https://arxiv.org/abs/1801.05894.

\bibitem{Hornik}
{\sc Hornik, K.}
\newblock Approximation capabilities of multilayer feedforward networks.
\newblock {\em Neural networks 4}, 2 (1991), 251--257.

\bibitem{KidgerLyons}
{\sc Kidger, P., and Lyons, T.}
\newblock Universal approximation with deep narrow networks.
\newblock In {\em Conference on Learning Theory\/} (2020), PMLR,
  pp.~2306--2327.

\bibitem{kratsios2021universal}
{\sc Kratsios, A.}
\newblock The universal approximation property.
\newblock {\em Annals of Mathematics and Artificial Intelligence\/} (2021),
  1--35.

\bibitem{kratsios2020non}
{\sc Kratsios, A., and Bilokopytov, I.}
\newblock Non-euclidean universal approximation.
\newblock {\em arXiv preprint arXiv:2006.02341\/} (2020).

\bibitem{Ledoux}
{\sc Ledoux, M.}
\newblock Concentration of measure and logarithmic {S}obolev inequalities.
\newblock In {\em S\'{e}minaire de {P}robabilit\'{e}s, {XXXIII}}, vol.~1709 of
  {\em Lecture Notes in Math.} Springer, Berlin, 1999, pp.~120--216.

\bibitem{LLPS}
{\sc Leshno, M., Lin, V.~Y., Pinkus, A., and Schocken, S.}
\newblock Multilayer feedforward networks with a nonpolynomial activation
  function can approximate any function.
\newblock {\em Neural Networks 6}, 6 (1993), 861 -- 867.

\bibitem{LTE}
{\sc Li, Q., Tai, C., and E, W.}
\newblock Stochastic modified equations and adaptive stochastic gradient
  algorithms.
\newblock In {\em Proceedings of the 34th International Conference on Machine
  Learning - Volume 70\/} (2017), ICML'17, JMLR.org, pp.~2101--2110.

\bibitem{LTW}
{\sc Li, Q., Tai, C., and Weinan, E.}
\newblock Stochastic modified equations and adaptive stochastic gradient
  algorithms.
\newblock In {\em 34th International Conference on Machine Learning, ICML
  2017\/} (2017), vol.~5, pp.~3306--3340.

\bibitem{lu2017expressive}
{\sc Lu, Z., Pu, H., Wang, F., Hu, Z., and Wang, L.}
\newblock The expressive power of neural networks: A view from the width.
\newblock {\em Advances in neural information processing systems 30\/} (2017),
  6231--6239.

\bibitem{Milman}
{\sc Milman, E.}
\newblock On the role of convexity in functional and isoperimetric
  inequalities.
\newblock {\em Proc. Lond. Math. Soc. (3) 99}, 1 (2009), 32--66.

\bibitem{park1991universal}
{\sc Park, J., and Sandberg, I.~W.}
\newblock Universal approximation using radial-basis-function networks.
\newblock {\em Neural computation 3}, 2 (1991), 246--257.

\bibitem{park2020minimum}
{\sc Park, S., Yun, C., Lee, J., and Shin, J.}
\newblock Minimum width for universal approximation.
\newblock {\em arXiv preprint arXiv:2006.08859\/} (2020).

\bibitem{Pinkus}
{\sc Pinkus, A.}
\newblock Approximation theory of the mlp model in neural networks.
\newblock {\em Acta numerica 8}, 1 (1999), 143--195.

\bibitem{Royer}
{\sc Royer, G.}
\newblock {\em An initiation to logarithmic {S}obolev inequalities}, vol.~14 of
  {\em SMF/AMS Texts and Monographs}.
\newblock American Mathematical Society, Providence, RI; Soci\'{e}t\'{e}
  Math\'{e}matique de France, Paris, 2007.
\newblock Translated from the 1999 French original by Donald Babbitt.

\bibitem{shaham2018provable}
{\sc Shaham, U., Cloninger, A., and Coifman, R.~R.}
\newblock Provable approximation properties for deep neural networks.
\newblock {\em Applied and Computational Harmonic Analysis 44}, 3 (2018),
  537--557.

\bibitem{SSJ}
{\sc Shi, B., Su, W.~J., and Jordan, M.~I.}
\newblock {On Learning Rates and Schr\"odinger Operators}.
\newblock {\em arXiv preprint arXiv:2004.06977\/} (2020).

\bibitem{tabuada2020universal}
{\sc Tabuada, P., and Gharesifard, B.}
\newblock Universal approximation power of deep residual neural networks via
  nonlinear control theory.
\newblock {\em arXiv preprint arXiv:2007.06007\/} (2020).

\bibitem{VBook}
{\sc Villani, C.}
\newblock {\em Optimal transport}, vol.~338 of {\em Grundlehren der
  Mathematischen Wissenschaften [Fundamental Principles of Mathematical
  Sciences]}.
\newblock Springer-Verlag, Berlin, 2009.
\newblock Old and new.

\bibitem{vlavcic2019neural}
{\sc Vla{\v{c}}i{\'c}, V., and B{\"o}lcskei, H.}
\newblock Neural network identifiability for a family of sigmoidal
  nonlinearities.
\newblock {\em arXiv preprint arXiv:1906.06994\/} (2019).

\bibitem{Wang}
{\sc Wang, F.-Y.}
\newblock Logarithmic {S}obolev inequalities: conditions and counterexamples.
\newblock {\em J. Operator Theory 46}, 1 (2001), 183--197.

\bibitem{Wu}
{\sc Wu, L.}
\newblock Uniformly integrable operators and large deviations for markov
  processes.
\newblock {\em Journal of Functional Analysis 172}, 2 (2000), 301--376.

\bibitem{yarotsky2018universal}
{\sc Yarotsky, D.}
\newblock Universal approximations of invariant maps by neural networks.
\newblock {\em arXiv preprint arXiv:1804.10306\/} (2018).

\end{thebibliography}

\end{document}